\newcommand\A{\mathrm{A}}   \newcommand\Aut{\mathrm{Aut}}
 \newcommand\bbF{\mathbb{F}} 
\newcommand\C{\mathrm{C}}  \newcommand\calC{\mathcal{C}} \newcommand\calM{\mathcal{M}} \newcommand\calS{\mathcal{S}} \newcommand\Cay{\mathrm{Cay}} \newcommand\Cen{\mathbf{C}}
   \newcommand\GL{\mathrm{GL}} \newcommand\GO{\mathrm{O}} \newcommand\GU{\mathrm{GU}}
 \newcommand\magma{{\sc Magma} }
\newcommand\N{\mathrm{N}}
\newcommand\Nor{\mathbf{N}}
\newcommand\Pa{\mathrm{P}}   \newcommand\PGL{\mathrm{PGL}} \newcommand\PGO{\mathrm{PGO}} \newcommand\PGU{\mathrm{PGU}} \newcommand\POm{\mathrm{P\Omega}} \newcommand\ppd{\mathrm{ppd}} \newcommand\PSL{\mathrm{PSL}}   \newcommand\PSO{\mathrm{PSO}} \newcommand\PSp{\mathrm{PSp}} \newcommand\PSU{\mathrm{PSU}}
  \newcommand\Soc{\mathrm{Soc}} \newcommand\Sp{\mathrm{Sp}}    \newcommand\Sy{\mathrm{S}}  \newcommand\Sz{\mathrm{Sz}}
\newtheorem{theorem}{Theorem}[section]
\newtheorem{lemma}[theorem]{Lemma}
\newtheorem{proposition}[theorem]{Proposition}
\newtheorem{corollary}[theorem]{Corollary}
\newtheorem{conjecture}[theorem]{Conjecture}
\theoremstyle{definition}
\begin{document}

\title[On cubic graphical regular representations]{On cubic graphical regular representations of finite simple groups}

\author[Xia]{Binzhou Xia}
\address{School of Mathematics and Statistics\\The University of Melbourne\\Parkville, VIC 3010\\Australia}
\email{binzhoux@unimelb.edu.au}


\maketitle

\begin{abstract}
A recent conjecture of the author and Teng Fang states that there are only finitely many finite simple groups with no cubic graphical regular representation. In this paper, we make crucial progress towards this conjecture by giving an affirmative answer for groups of Lie type of large rank.

\textit{Key words:} Cayley graph; cubic graph; graphical regular representation; finite simple groups

\end{abstract}

\section{Introduction}

Given a group $G$ and a subset $S$ of $G$ such that $1\notin S$ and $S^{-1}=S$, where $S^{-1}:=\{g^{-1}\mid g\in S\}$,
the \emph{Cayley graph} $\Cay(G,S)$ of $G$ with \emph{connection set} $S$ is the graph with vertex set $G$ such that two vertices $x$ and $y$ are adjacent
if and only if $yx^{-1}\in S$. It is easy to see that $\Cay(G,S)$ is connected if and only if $S$ generates the group $G$. If one identifies $G$ with its right regular representation, then $G$ is a subgroup of $\Aut(\Cay(G,S))$. We call $\Cay(G,S)$ a \emph{graphical regular representation} (\emph{GRR} for short) of $G$ if $\Aut(\Cay(G,S))=G$.

There was a considerable amount of work on GRRs in the literature, which culminated in~\cite{Godsil1981,Hetzel1976} with the classification of finite groups admitting GRRs. It turns out that most finite groups have GRRs. In particular, every finite nonsolvable group has a GRR. In contrast to the general existence problem of GRRs, not much is known as whether a group has a GRR of prescribed valency. In~\cite{FLWX2002}, Fang, Li, Wang and Xu conjectured that every finite nonabelian simple group has a cubic GRR. Although this conjecture has been shown to be not true, since the authors of~\cite{XF2016} found $\PSL_2(7)$ as a counterexample, it was conjectured that there are only finitely many counterexamples:

\begin{conjecture}\label{conj1}
\emph{(\cite{XF2016})} There are only finitely many finite nonabelian simple groups that have no cubic GRR.
\end{conjecture}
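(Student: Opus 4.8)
The plan is to reduce the conjecture through the classification of finite simple groups. The twenty-six sporadic groups are finite in number and so cannot affect a statement about finitely many exceptions, which leaves the alternating groups $\A_n$ and the groups of Lie type; for the alternating groups one can exhibit explicit cubic GRRs for all large $n$, so the decisive family is the groups of Lie type, and the goal here is to settle all of them of sufficiently large rank. For a fixed target $G$ I would seek a connection set of the form $S=\{a,b,b^{-1}\}$ with $a$ an involution, $b$ not an involution, and $\langle a,b\rangle=G$, which automatically produces a connected cubic Cayley graph $\Gamma=\Cay(G,S)$. Proving that $\Gamma$ is a GRR then separates into a group-theoretic task, arranging $\Aut(G,S)=1$, and a combinatorial task, showing that $\Gamma$ is a \emph{normal} Cayley graph, meaning $G\trianglelefteq X$ where $X:=\Aut(\Gamma)$; together these give $X=G\rtimes\Aut(G,S)=G$.

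The first task is comparatively soft. As $a$ is the only involution in $S$, any $\sigma\in\Aut(G,S)$ must fix $a$ and permute $\{b,b^{-1}\}$, and since $\langle a,b\rangle=G$ the fixing of $b$ forces $\sigma=1$; thus the sole danger is an automorphism centralising $a$ and inverting $b$. I would rule this out by choosing $(a,b)$ so that no automorphism of $G$ simultaneously centralises $a$ and conjugates $b$ to $b^{-1}$. In a group of large rank there is ample room among the classes of involutions and of suitably chosen (for instance regular semisimple) elements to meet this requirement, and it can be verified by a centraliser and fusion computation.

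The substantive part, and the main obstacle, is normality: excluding every overgroup $G<X\le\Aut(\Gamma)$ in which $G$ is regular. Here I would lean on the rigidity of cubic graphs. The stabiliser $X_v$ acts on the three neighbours of $v$, so its image in $\Sym(3)$ is either transitive, whence $\Gamma$ is arc-transitive and $|X_v|$ divides $48$ by Tutte's theorem, or intransitive, whence $X_v$ is a $2$-group by the standard local analysis; either way $G$ has tightly controlled index in $X$. I would then take a minimal normal subgroup $N\trianglelefteq X$: since $G$ is simple and $G\cap N\trianglelefteq G$, either $G\cap N=1$, a case dispatched via the block system of the $N$-orbits, or $G\le N$, forcing $N$ to be a direct product of copies of $G$ containing $G$ as a regular subgroup. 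The first alternative readily yields $G\trianglelefteq X$, and the second is the delicate one, precisely where large rank is decisive.

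The leverage of large rank is twofold and forms the heart of the argument I would develop. First, a simple group of Lie type of large rank has no proper subgroup of small index beyond the expected ones, so realising $G$ as a regular subgroup of index dividing $48$ (or of $2$-power index) inside a product $N=T_1\times\cdots\times T_k$ of copies of $G$ is severely constrained, and eliminating the resulting diagonal-type and product-type embeddings is the technical core. Second, the internal structure of Lie type groups—parabolic and Levi subgroups and the behaviour of root and Weyl-group elements—lets one set up an induction on the rank, reducing each large-rank configuration either to smaller rank or to a situation that violates the generation and centraliser conditions already imposed on $(a,b)$. I expect the hardest point to be disposing of the arc-transitive and product-action overgroups uniformly across the Lie type families, since that is exactly where the genuine small exceptions such as $\PSL_2(7)$ live and where a careless choice of $(a,b)$ would break the proof.
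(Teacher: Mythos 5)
The first thing to note is that the paper does not prove this conjecture, and neither would your proposal: the statement remains open, and the paper only verifies it for simple groups of Lie type of rank at least $9$ (Corollary~\ref{cor2}). Your reduction has the same scope but misstates what it achieves. Settling ``all groups of Lie type of sufficiently large rank'' does not reduce the conjecture to finitely many cases, because for each bounded rank there are infinitely many groups as $q$ varies ($\PSL_3(q)$, $\PSU_4(q)$, \dots); the paper is explicit that after its result the conjecture is still open for all Lie type groups of rank at most $8$ other than $\Sz(q)$ and $\PSL_2(q)$. So even if every step of your plan were carried out, you would not have proved the statement, only the same partial case the paper handles.

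On method: your split into ``$\Aut(G,S)=1$'' plus ``normality of the Cayley graph'' is workable, but the paper does not redo the local analysis of cubic graphs (Tutte's bound, minimal normal subgroups, diagonal and product overgroups) that you identify as the technical core. It invokes Godsil's theorem \cite[Theorem~3.3]{Godsil1983}: once $G$ has no proper subgroup of index at most $47$ --- automatic for classical groups of large enough order by \cite[Theorem~5.2.2]{KL1990} --- a connected cubic Cayley graph $\Cay(G,S)$ is a GRR if and only if $\Aut(G,S)=1$. The entire overgroup problem you describe is thereby outsourced, and the real work of the paper lies elsewhere: a probabilistic argument showing that for $x$ of primitive-prime-divisor order, a random involution $y$ almost surely (i) generates $G$ with $x$ --- proved by summing $i_2(M)/i_2(G)$ over maximal subgroups $M$ containing $x$, using the estimates of King and Liebeck--Shalev --- and (ii) admits no nontrivial automorphism fixing $\{x,x^{-1},y\}$, which reduces to bounding $i_2(\Cen_G(\alpha))$ for involutions $\alpha$ inverting $x$. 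Your alternative of hand-picking a single pair $(a,b)$ and verifying a centralizer-and-fusion condition is not accompanied by any uniform mechanism across the classical families, and the induction on rank you gesture at is not set up; the counting argument is precisely what makes the large-rank case go through without such case analysis.
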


Since cubic graphs have even order, groups of prime order do not have a GRR. Some known results have verified Conjecture~\ref{conj1} for alternating groups~\cite{Godsil1983}, Suzuki groups~\cite{FLWX2002} and projective special linear groups of dimension two~\cite{XF2016}. Very recently, Spiga~\cite{Spiga2018} obtained some sufficient conditions for a cubic Cayley graph of a finite nonabelian simple group to be a GRR. These conditions are quite mild, hence supporting Conjecture~\ref{conj1}. He then made several stronger conjectures on cubic GRRs of nonabelian simple groups, one of which is as follows:

\begin{conjecture}\label{conj2}
\emph{(\cite[Conjecture~1.3]{Spiga2018})} Except for a finite number of cases and for the groups $\PSL_2(q)$, every finite nonabelian simple
group has a cubic GRR whose connection set contains exactly one involution.
\end{conjecture}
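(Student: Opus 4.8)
The plan is to realise the conjecture using connection sets of the special shape $S=\{z,x,x^{-1}\}$, where $z$ is an involution and $x$ has order greater than $2$; such an $S$ is symmetric, avoids the identity, and contains exactly one involution, exactly as the statement demands. For a fixed finite nonabelian simple group $G$---and in view of the abstract I would first attempt only the groups of Lie type of large rank---the problem reduces to producing a single pair $(z,x)$ for which $\Cay(G,S)$ is a GRR. Two properties must be secured simultaneously: \emph{connectedness}, namely $\langle z,x\rangle=G$, and \emph{rigidity}, namely $\Aut(\Cay(G,S))=G$.

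For rigidity I would follow the standard two-step paradigm. Write $A:=\Aut(\Cay(G,S))$, so that $G$ sits in $A$ as a regular subgroup and $A=G$ is equivalent to the vertex stabiliser $A_1$ being trivial. The first step is to prove $G\trianglelefteq A$; granting this, $A_1$ embeds into the setwise stabiliser $\Aut(G,S):=\{\sigma\in\Aut(G)\mid S^\sigma=S\}$, and it remains to show $\Aut(G,S)=1$. Since $z$ is the unique involution in $S$, any $\sigma\in\Aut(G,S)$ fixes $z$ and maps $x$ into $\{x,x^{-1}\}$. I would therefore draw $z$ from a well-understood class of involutions, for which $\Cen_G(z)$ is a known reductive subgroup of smaller rank, and then choose $x$ so that the only automorphism of $G$ fixing $z$ and sending $x$ into $\{x,x^{-1}\}$ is the identity. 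The abundance of regular semisimple elements available in large rank makes such a rigid choice of $x$ plausible, and at the same time lets one arrange $\langle z,x\rangle=G$ by a generation argument.

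The main obstacle is the normality step $G\trianglelefteq A$. Here I would invoke the sufficient conditions of~\cite{Spiga2018} for a cubic Cayley graph of a finite nonabelian simple group to be a GRR, and verify that the pair $(z,x)$ constructed above satisfies them. The delicate point is to exclude the possibility that $A$ properly contains $G$ with $G$ not normal; for a simple regular subgroup this forces a restrictive factorisation of $A$, which one eliminates using the chosen orders of $z$, $x$ and $zx$ together with the bounded local structure of a cubic graph. I expect the genuine difficulty to lie not in any individual group but in making these order and generation constraints satisfiable \emph{uniformly} across a whole family of Lie type groups of unbounded rank, which is precisely where the large-rank hypothesis should earn its keep.

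Finally, since the full conjecture ranges over all nonabelian simple groups, I would not expect this single construction to dispose of every family: the groups $\PSL_2(q)$ are explicitly excluded, and the low-rank and exceptional families will likely need separate, more ad hoc connection sets. My proposal is thus to establish the conjecture in the large-rank regime, where the structural uniformity of Lie type groups can be exploited, and to treat the remaining bounded-rank cases as a finite or separately tractable residue.
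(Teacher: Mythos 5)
Your reduction is the right one and matches the paper's: take $S=\{y,x,x^{-1}\}$ with $y$ an involution and $x$ of larger order, and split the problem into generation ($\langle x,y\rangle=G$) plus rigidity ($\Aut(G,S)=1$) plus a normality criterion for $G$ in $\Aut(\Cay(G,S))$. But the execution diverges in two substantive ways. First, for normality the paper does not need Spiga's sufficient conditions: it invokes the much older theorem of Godsil (\cite[Theorem~3.3]{Godsil1983}) together with the fact that for large $q^n$ the group $G$ has no proper subgroup of index at most $47$, which yields outright that $\Cay(G,\{x,x^{-1},y\})$ is a GRR if and only if $\langle x,y\rangle=G$ and $\Aut(G,\{x,x^{-1},y\})=1$. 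Second, and more importantly, the paper is probabilistic where you are deterministic: it fixes $x$ of order $r\in\ppd(p,ef)$ --- a choice that simultaneously pins down $\Nor_G(\langle x\rangle)$ exactly (Lemma~\ref{lem7}) and drastically restricts the maximal subgroups of $G$ containing $x$ (King's work) --- and then shows that a \emph{random} involution $y$ works with probability tending to $1$, by bounding $\sum_M i_2(M)/i_2(G)$ over maximal overgroups of $x$ for generation, and by bounding $i_2(\Cen_G(\alpha))/i_2(G)$ over involutory automorphisms $\alpha$ inverting $x$ for rigidity. This averaging is precisely what resolves the uniformity-in-rank difficulty that you yourself flag as the genuine obstacle: you would have to exhibit, uniformly across all classical groups of unbounded rank, one explicit involution $z$ lying outside every $\Cen_G(\alpha)$ with $x^\alpha=x^{-1}$ and generating with $x$, and your proposal offers no mechanism for that beyond plausibility. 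So the outline is sound, but the key device you are missing is the combination of a primitive-prime-divisor element with involution-counting estimates; without it the plan does not close.

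Two smaller remarks. Your observation that any $\sigma\in\Aut(G,S)$ fixing $y$ and sending $x$ to $x^{-1}$ must be an involution (since $\sigma^2$ fixes a generating set) is exactly the paper's argument and is worth keeping. On the other hand, choosing $x$ regular semisimple with a ``well-understood'' involution $z$ is weaker than what is needed: the relevant condition is on which automorphisms invert $x$ and which maximal subgroups contain $x$, and generic semisimplicity alone does not control either as cleanly as the $\ppd$ condition does.
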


In the present paper, we verify Conjectures~\ref{conj1} and~\ref{conj2} in a crucial case by showing that they are true for finite groups of Lie type of rank at least $9$ (Corollary~\ref{cor2}). This in conjunction with previous results reduces the verification of Conjectures~\ref{conj1} and~\ref{conj2} to finite groups of Lie type of rank at most $8$ except $\Sz(q)$ and $\PSL_2(q)$. In fact, our main result (Theorem~\ref{thm1}) is much stronger than the existence of a cubic GRR for the groups described there. It asserts that taken with an element of certain prime order in those groups, a random involution will almost surely give a cubic GRR. To explicitly state the result, we need the notion of primitive prime divisors.

Given positive integers $a$ and $m$, a prime number $r$ is called a \emph{primitive prime divisor} of the pair $(a,m)$ if $r$ divides $a^m-1$ but does not divide $a^i-1$ for any positive integer $i<m$. In other words, a primitive prime divisor of $(a,m)$ is a prime number $r$ such that $a$ has order $m$ in $\mathbb{F}_r^\times$. In particular, if $r$ is a primitive prime divisor of $(a,m)$, then $r>m$. For any positive integers $a$ and $m$, denote the set of primitive prime divisors of $(a,m)$ by $\ppd(a,m)$. By Zsigmondy's theorem (see, for example,~\cite[Theorem~IX.8.3]{Blackburn1982}), $\ppd(a,m)$ is nonempty whenever $a\geqslant2$ and $m\geqslant3$ with $(a,m)\neq(2,6)$.

\begin{theorem}\label{thm1}
Let $G$ be a finite simple classical group and $r\in\ppd(p,ef)$, where $G$ and $e$ are given in Table~$\ref{tab4}$ with $q=p^f$ and $p$ prime. Suppose that $x$ is an element of order $r$ in $G$. Then the probability of a random involution $y$ in $G$ making $\Cay(G,\{x,x^{-1},y\})$ a GRR of $G$ tends to $1$ as $q^n$ tends to infinity.
\end{theorem}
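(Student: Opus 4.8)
The plan is to fix $x$ of order $r$ and bound the proportion of involutions $y$ for which $\Gamma:=\Cay(G,\{x,x^{-1},y\})$ fails to be a GRR, showing it tends to $0$. Writing $A=\Aut(\Gamma)$ and $S=\{x,x^{-1},y\}$, the graph is a GRR exactly when $A=G$ under the regular representation. Since $G\leqslant A$ always and $\Aut(G,S)$ (the automorphisms of $G$ fixing $S$ setwise) is isomorphic to a subgroup of the vertex stabilizer $A_1$, with $G\trianglelefteq A$ equivalent to $A_1=\Aut(G,S)$, failure of the GRR property falls into three cases: (i) $S$ does not generate $G$, so $\Gamma$ is disconnected; (ii) $S$ generates $G$ but $\Aut(G,S)\neq1$; and (iii) $S$ generates $G$ and $\Aut(G,S)=1$, yet $G\ntrianglelefteq A$, so that $\Gamma$ is a non-normal Cayley graph. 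I would bound the set of bad $y$ in each case separately, using throughout that the number $i_2(G)$ of involutions of $G$ is bounded below by a fixed positive power of $|G|$ (roughly $|G|^{1/2}$ for the families in Table~\ref{tab4}), so that any bad set of size $o(i_2(G))$ is negligible as $q^n\to\infty$.

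For case (i) the crucial input is that $x$ is a primitive prime divisor element: since $r\in\ppd(p,ef)$ with $e$ chosen as in Table~\ref{tab4}, the element $x$ acts on the natural module with an $\mathbb{F}_q$-irreducible constituent of dimension $e$, and the Guralnick--Penttila--Praeger--Saxl classification of subgroups of classical groups whose order is divisible by such an $r$ confines $\langle x\rangle$ to a short, explicit list of conjugacy classes of maximal subgroups $M$. If $\langle x,y\rangle\neq G$ then $y$ lies in one of these $M$, so the bad set has size at most $\sum_{M\supseteq\langle x\rangle}i_2(M)$; because $e$ is chosen large, each such $|M|$ is smaller than $|G|$ by a factor growing with $q^n$, and this sum is $o(i_2(G))$. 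Case (ii) is similar in spirit: any nontrivial $\alpha\in\Aut(G,S)$ must fix the unique involution $y\in S$ and send $x$ to $x$ or $x^{-1}$, so $\alpha$ lies in $N:=\Nor_{\Aut(G)}(\langle x\rangle)$, a group of bounded order by the ppd structure of $\langle x\rangle$; for each of the boundedly many $\alpha\in N\smallsetminus\{1\}$ the bad $y$ are confined to the involutions fixed by $\alpha$, a set of size $o(i_2(G))$ since the fixed-point set of $\alpha$ is a proper subgroup.

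The main obstacle is case (iii): excluding extra automorphisms when $\Gamma$ is connected and $\Aut(G,S)=1$, where now $A_1\neq1$. Here I would exploit the rigid local structure of $\Gamma$, namely that the $x$-edges decompose it into disjoint $r$-cycles (the cosets of $\langle x\rangle$) while the $y$-edges form a perfect matching, and that these cycles are long because $r>ef$. The strategy is to show that for all but $o(i_2(G))$ choices of $y$ there are no unexpected short cycles through $y$-edges, so that the coset partition of $\langle x\rangle$ is forced to be $A$-invariant; Spiga's sufficient conditions~\cite{Spiga2018} for a cubic Cayley graph of a finite simple group to be a GRR can then be invoked to pass from this structural control to the conclusion $G\trianglelefteq A$, and to dispose of the finitely many degenerate configurations. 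Each unexpected cycle imposes a word equation $w(x,y)=1$ of bounded length on $y$, so the count reduces to bounding the number of involutions satisfying such equations. The delicate point, and the step I expect to be hardest, is obtaining a \emph{uniform} bound on the number of involutions annihilated simultaneously by the relevant word equations and the subgroup and fixed-point constraints of all three cases, strong enough to beat the growth of $i_2(G)$ uniformly across every family in Table~\ref{tab4} as $q^n\to\infty$.
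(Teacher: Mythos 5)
Your decomposition into failure modes (i) and (ii) matches the paper's: the paper writes $K=\{y\in I\mid G=\langle x,y\rangle\}$ and $L=\{y\in I\mid \Aut(G,\{x,x^{-1},y\})=1\}$, handles $I\setminus K$ by exactly the maximal-overgroup count you describe (Proposition~\ref{prop1}), and handles $K\setminus L$ by observing that a nontrivial $\alpha\in\Aut(G,S)$ must invert $x$ and fix $y$, hence is an involution in $\Nor_{\Aut(G)}(\langle x\rangle)$ and $y\in I\cap\Cen_G(\alpha)$. However, there are two genuine problems with your write-up.

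The serious one is case (iii). The paper does not need to analyse short cycles, word equations, or the action of $\Aut(\Gamma)$ at all: it assumes $q^n$ is large enough that $G$ has no proper subgroup of index at most $47$ (by \cite[Theorem~5.2.2]{KL1990}) and then invokes \cite[Theorem~3.3]{Godsil1983}, which says that under this hypothesis a cubic Cayley graph $\Cay(G,\{x,x^{-1},y\})$ is a GRR if and only if $y\in K\cap L$. In other words, your case (iii) is vacuous for the groups in Table~\ref{tab4} once $q^n$ is large, and the entire content of the theorem reduces to cases (i) and (ii). Your proposed route through ``unexpected short cycles'' and a uniform bound on involutions satisfying word equations is exactly the step you concede you cannot complete (``the delicate point, and the step I expect to be hardest''), so as written the argument does not close; you need Godsil's theorem (or Spiga's results used in this mode) as a black box that eliminates case (iii), not as a tool applied after a structural analysis you have not carried out.

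Secondly, your quantitative claims in case (ii) are wrong as stated. The group $\Nor_{\Aut(G)}(\langle x\rangle)$ is not of bounded order: by Lemma~\ref{lem7} it has order roughly $nq^{n-1}$ (or $nq^{n/2}$ in the symplectic and orthogonal cases), so the number of candidate involutions $\alpha$ inverting $x$ grows with $q$. Likewise, ``the fixed-point set of $\alpha$ is a proper subgroup'' does not by itself give $o(i_2(G))$ involutions, let alone enough room to absorb the $\approx nq^{n-1}$ choices of $\alpha$. The paper needs the explicit estimates of Lemmas~\ref{lem5} and~\ref{lem3}, which show $i_2(\Cen_G(\alpha))<u(G)\,i_2(G)$ with $u(G)$ of order about $q^{-n^2/10}$ or $q^{-n^2/20}$ (obtained by locating $\Cen_G(\alpha)$ inside explicit maximal subgroups of large index and applying the Liebeck--Shalev bound of Lemma~\ref{lem6}); only then does $u(G)v(G)\to 0$. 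This is repairable along the lines you indicate, but the bound you assert is not strong enough and the one-line justification you give for it is false.
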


\begin{table}[htbp]
\caption{The pair $(G,e)$ in Theorem~\ref{thm1}}\label{tab4}
\centering
\begin{tabular}{|l|l|l|}
\hline
$G$ & Conditions & $e$\\
\hline
$\PSL_n(q)$ & $n\geqslant9$ & $n$\\
$\PSU_n(q)$ & $n\geqslant5$ odd & $2n$\\
$\PSU_n(q)$ & $n\geqslant6$ even & $2(n-1)$\\
$\PSp_n(q)$ & $n\geqslant10$ even & $n$\\
$\POm_n(q)$ & $n\geqslant9$ odd, $q$ odd & $n-1$\\
$\POm_n^+(q)$ & $n\geqslant14$ even & $n-2$\\
$\POm_n^-(q)$ & $n\geqslant14$ even & $n$\\
\hline
\end{tabular}
\end{table}

Here are some corollaries of Theorem~\ref{thm1}.

\begin{corollary}\label{cor1}
With at most finitely many exceptions, every group in Table~$\ref{tab4}$ has a cubic GRR whose connection set contains exactly one involution.
\end{corollary}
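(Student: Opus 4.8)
The plan is to deduce Corollary~\ref{cor1} directly from Theorem~\ref{thm1}, exploiting the fact that a probability tending to $1$ forces the existence of at least one good involution for all but finitely many groups. Fix a group $G$ from Table~\ref{tab4} and let $e$ be the associated value. By Zsigmondy's theorem, for $q^n$ large enough the set $\ppd(p,ef)$ is nonempty, so we may choose a prime $r\in\ppd(p,ef)$; I would first check that $G$ genuinely contains an element $x$ of order $r$, which follows from the standard theory of primitive prime divisors and the structure of maximal tori in classical groups (an element of order $r$ sits in a torus of the relevant order dividing $q^{ef}-1$ but no smaller $q^i-1$). With such an $x$ in hand, Theorem~\ref{thm1} tells us that the proportion of involutions $y\in G$ for which $\Cay(G,\{x,x^{-1},y\})$ is a GRR tends to $1$ as $q^n\to\infty$.

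Next I would make the passage from "probability tending to $1$" to "existence" explicit. There is some $q_0$ such that for all groups in the table with $q^n\geqslant q_0$ the probability exceeds $0$; since a probability is a ratio of nonnegative integers over the (positive) number of involutions in $G$, a strictly positive probability is equivalent to the existence of at least one involution $y$ making $\Cay(G,\{x,x^{-1},y\})$ a GRR. For any such $y$, the connection set $S=\{x,x^{-1},y\}$ is symmetric (as $x^{-1}\in S$ and $y=y^{-1}$), avoids the identity (since $r>1$ and $y$ is an involution so neither equals $1$), and has size exactly $3$ because $x\neq x^{-1}$ (as $r$ is odd, being a primitive prime divisor with $r>ef\geqslant 3$) and $y\neq x^{\pm1}$ (the orders differ). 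Hence $\Cay(G,S)$ is a genuine cubic GRR, and its connection set contains exactly one involution, namely $y$. This yields the desired conclusion for every group in the table with $q^n\geqslant q_0$.

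Finally I would absorb the remaining groups into the finite exceptional set. The condition $q^n\geqslant q_0$ excludes only finitely many pairs $(q,n)$ for each row of Table~\ref{tab4}, and since each row is a single infinite family parametrised by $q$ and $n$, there are only finitely many groups $G$ in the table failing $q^n\geqslant q_0$; these we simply include among the "at most finitely many exceptions." Collecting the good groups from all seven rows then gives the statement. The only point requiring care is the uniformity of the convergence in Theorem~\ref{thm1} across the different families: the theorem is phrased in terms of $q^n\to\infty$, so I would confirm that the implicit threshold $q_0$ can be taken uniformly over the finitely many rows of the table, which is immediate since a finite union of cofinite conditions is cofinite. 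The main (and essentially only) obstacle is this bookkeeping of the exceptional set; the substantive content is entirely carried by Theorem~\ref{thm1}.
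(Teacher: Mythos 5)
Your proposal is correct and is essentially the argument the paper intends: Corollary~\ref{cor1} is stated as an immediate consequence of Theorem~\ref{thm1}, obtained exactly by noting that a probability tending to $1$ is eventually positive, that positivity yields an involution $y$ with $\Cay(G,\{x,x^{-1},y\})$ a GRR containing exactly one involution in its connection set, and that only finitely many groups in each row fail the threshold $q^n\geqslant q_0$. The only minor remark is that your hedge about Zsigmondy is unnecessary here, since $ef\geqslant 8$ in every row of Table~\ref{tab4}, so $\ppd(p,ef)$ is nonempty without exception.
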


\begin{corollary}\label{cor2}
With at most finitely many exceptions, every finite simple group of Lie type of rank at least $9$ has a cubic GRR whose connection set contains exactly one involution.
\end{corollary}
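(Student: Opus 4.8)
The plan is to deduce Corollary~\ref{cor2} from Corollary~\ref{cor1} by a purely structural argument, with no further analytic input: I would show that every finite simple group of Lie type of rank at least $9$ already occurs among the groups listed in Table~\ref{tab4}, so that the existence of a cubic GRR transfers directly. In particular, the finitely many exceptions appearing in Corollary~\ref{cor2} are simply inherited from those in Corollary~\ref{cor1}, and no new analysis of automorphism groups is required.

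First I would invoke the classification of the finite simple groups of Lie type, which partitions them into the classical groups $\PSL_n(q)$, $\PSU_n(q)$, $\PSp_n(q)$, $\POm_n(q)$, $\POm_n^+(q)$, $\POm_n^-(q)$ and the exceptional groups (types $\mathrm{G}_2$, $\mathrm{F}_4$, $\mathrm{E}_6$, $\mathrm{E}_7$, $\mathrm{E}_8$ together with their twisted analogues $^2\mathrm{E}_6$, $^3\mathrm{D}_4$, and the families $\Sz(q)$, $^2\mathrm{G}_2(q)$, $^2\mathrm{F}_4(q)$). The decisive observation is that each exceptional group has Lie rank at most $8$, the maximum $8$ being attained only by $\mathrm{E}_8(q)$. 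Hence the hypothesis that the rank be at least $9$ eliminates every exceptional group, and I am reduced to treating only the classical groups.

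Next I would translate the rank bound into the natural-module dimension $n$ used in Table~\ref{tab4}. Recording the Lie rank in terms of $n$ for each family---rank $n-1$ for $\PSL_n(q)$, $\lfloor n/2\rfloor$ for $\PSU_n(q)$, $n/2$ for $\PSp_n(q)$ and for $\POm_n^+(q)$, $(n-1)/2$ for $\POm_n(q)$ with $n$ odd, and $n/2-1$ for $\POm_n^-(q)$---I would verify case by case that rank $\geq 9$ forces $n$ into a range contained in the corresponding row of the table. For example, rank $\geq 9$ gives $n\geq 10$ for $\PSL_n(q)$ (inside $n\geq 9$), $n\geq 18$ for $\PSU_n(q)$ (inside $n\geq 5$), and $n\geq 20$ for $\POm_n^-(q)$ (inside $n\geq 14$); the symplectic and remaining orthogonal families are entirely analogous and each lands comfortably within its stated range. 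Thus every classical group of rank at least $9$ is one of the groups to which Corollary~\ref{cor1} applies, and Corollary~\ref{cor2} follows at once.

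The argument contains no hard mathematics, so the only real obstacle is bookkeeping accuracy. I would need to fix a single, consistent notion of rank throughout (the rank as a group with a $\mathrm{BN}$-pair), and to be careful about the differing conventions relating the dimension $n$ to the Lie rank in the twisted versus untwisted cases; reassuringly, the conclusion is robust, since the alternative ``rank of the ambient algebraic group'' convention yields dimension bounds that still lie inside the table's ranges. I would also confirm that no classical group of rank at least $9$ is coincidentally isomorphic to a group of smaller rank or to an exceptional group---all such exceptional isomorphisms occur only in very small rank and hence cause no difficulty here. Once these routine checks are complete, the corollary is immediate.
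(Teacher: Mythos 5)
Your proposal is correct and matches the paper's (implicit) derivation: Corollary~\ref{cor2} is obtained from Corollary~\ref{cor1} precisely by observing that the exceptional groups have rank at most $8$ and that for each classical family the rank-$\geqslant 9$ condition forces the dimension $n$ into the range of the corresponding row of Table~\ref{tab4}, under either convention for the rank of a twisted group. Your case-by-case bounds ($n\geqslant10$ for $\PSL_n(q)$, $n\geqslant18$ for $\PSU_n(q)$ and $\PSp_n(q)$ and $\POm_n^+(q)$, $n\geqslant19$ for $\POm_n(q)$, $n\geqslant20$ for $\POm_n^-(q)$) are accurate and all lie inside the table's ranges.
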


\begin{corollary}\label{cor3}
There exists a constant $N$ such that every finite simple group of Lie type of rank at least $N$ has a cubic GRR whose connection set contains exactly one involution.
\end{corollary}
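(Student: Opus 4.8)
The plan is to deduce Corollary~\ref{cor3} directly from Corollary~\ref{cor2} by exploiting the finiteness of the exceptional set. First I would invoke Corollary~\ref{cor2} to obtain a finite collection $\mathcal{E}$ consisting of those finite simple groups of Lie type of rank at least $9$ that fail to possess a cubic GRR whose connection set contains exactly one involution. By that corollary, $\mathcal{E}$ is a finite set of groups, and this finiteness is the only feature of $\mathcal{E}$ I will use.

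Next I would observe that each member of $\mathcal{E}$ is an individual finite group and hence carries a well-defined Lie rank. Since $\mathcal{E}$ is finite, the set of ranks realised by members of $\mathcal{E}$ is a finite set of positive integers and is therefore bounded above; let $R$ denote the largest such rank, with the convention $R=8$ in the degenerate case $\mathcal{E}=\varnothing$. I would then set $N:=R+1$, noting that $N\geqslant 9$ because every group in $\mathcal{E}$ already has rank at least $9$. Now let $G$ be any finite simple group of Lie type of rank at least $N$. Its rank exceeds $R$, so $G$ is distinct from every member of $\mathcal{E}$ and thus $G\notin\mathcal{E}$; on the other hand its rank is at least $N\geqslant 9$, so Corollary~\ref{cor2} applies to $G$. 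Combining these two facts forces $G$ to have a cubic GRR whose connection set contains exactly one involution, which is precisely the assertion of Corollary~\ref{cor3} for the constant $N$.

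The substantive mathematics resides entirely in Theorem~\ref{thm1} and in the passage to Corollaries~\ref{cor1} and~\ref{cor2}; by contrast the present deduction is purely formal, and the only point requiring care is the trivial logical step that ``finitely many exceptions'' bounds the rank. I expect no analytic or combinatorial obstacle here: the corollary merely repackages Corollary~\ref{cor2}, trading the explicit threshold $9$ together with an unspecified finite exceptional list for an inexplicit but exception-free threshold $N$. If one wished for a cleaner statement, one could alternatively phrase the argument contrapositively, remarking that the groups of each fixed rank form finitely many isomorphism types only up to the field parameter, but this refinement is unnecessary since the bare finiteness of $\mathcal{E}$ already yields the bound.
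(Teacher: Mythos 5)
Your deduction is correct and coincides with the paper's (implicit) argument: the paper offers no separate proof of Corollary~\ref{cor3}, presenting it as an immediate consequence of Corollary~\ref{cor2}, and the only step required is exactly your observation that a finite exceptional set realises only finitely many ranks, so choosing $N$ exceeding their maximum (and at least $9$) removes all exceptions. The side remark about contraposition and field parameters is unnecessary, as you note, but harmless.
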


Note that every GRR of a finite simple group is connected, for otherwise its full automorphism group will be a wreath product. This means that if $\Cay(G,S)$ is a GRR of a finite simple group $G$, then $S$ is necessarily a generating set of $G$. In Theorem~\ref{thm1}, the connection set consists of an involution and element of prime order together with its inverse. Then since the connection set must be a generating set to form a GRR of $G$, Theorem~\ref{thm1} also shows that the probability of a random involution $y$ generating $G$ with $x$ tends to $1$ as $q^n$ tends to infinity. This byproduct, as stated in Proposition~\ref{prop1}, is actually the asymptotic probabilistic version of a recent result of King~\cite{King2017}. For the probabilistic generation problem of finite simple groups, the reader is referred to the work of Liebeck and Shalev~\cite{LS1996}.

Finally we remark that Theorem~\ref{thm1} may fail for finite groups of Lie type of small rank. For example, \cite[Proposition~1.5]{XF2016} shows that the connection set for a cubic GRR of $\PSL_2(q)$ must consist of three involutions.


\section{Preliminaries}

We first prove some elementary results that are needed in the sequel.

\begin{lemma}\label{lem8}
For each integer $n\geqslant7$ we have $(n+2)!<\min\{2^{\frac{n^2}{4}-\frac{n}{8}+8},3^{\frac{n^2}{4}-\frac{5n}{8}+4}\}$.
\end{lemma}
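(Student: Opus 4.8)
The plan is to split the single inequality into the two factor inequalities $(n+2)!<2^{n^2/4-n/8+8}$ and $(n+2)!<3^{n^2/4-5n/8+4}$, and to prove each separately by induction on $n$ with base case $n=7$. Writing $f(n)=n^2/4-n/8+8$ and $h(n)=n^2/4-5n/8+4$, the base case amounts to checking that $9!<2^{f(7)}$ and $9!<3^{h(7)}$; since $9!=362880$, $f(7)=155/8=19.375$, and $h(7)=95/8=11.875$, both hold, the second only with a modest margin. To keep the base case fully rigorous I would clear denominators and compare integers, e.g. verify $(9!)^8<3^{95}$ rather than rely on decimal approximations.

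For the inductive step of the first inequality, assuming $(n+2)!<2^{f(n)}$, I would multiply by $n+3$ to obtain $(n+3)!<(n+3)\,2^{f(n)}$, so that it suffices to show $(n+3)\leqslant 2^{f(n+1)-f(n)}$. A direct computation gives $f(n+1)-f(n)=(4n+1)/8$, reducing the step to the elementary inequality $n+3\leqslant 2^{(4n+1)/8}$. Likewise, for the second inequality the step reduces to $n+3\leqslant 3^{(4n-3)/8}$, using $h(n+1)-h(n)=(4n-3)/8$.

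It then remains to verify the two auxiliary inequalities $n+3\leqslant 2^{(4n+1)/8}$ and $n+3\leqslant 3^{(4n-3)/8}$ for all integers $n\geqslant 7$. Each is again handled by induction: when $n$ increases by $1$, the right-hand sides are multiplied by $\sqrt 2$ and $\sqrt 3$ respectively, whereas $n+3$ increases only by $1$. Since $\sqrt 2\,(n+3)\geqslant n+4$ and $\sqrt 3\,(n+3)\geqslant n+4$ for all $n\geqslant 7$ (both amounting to $(\sqrt 2-1)(n+3)\geqslant 1$, which is clear), the inequalities propagate once the base case $n=7$ is confirmed, where $2^{29/8}>10$ and $3^{25/8}>10$.

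I do not expect a genuine obstacle: the whole argument is elementary, resting on the observation that a quadratic exponent eventually dominates $\log((n+2)!)$. The only point requiring care is the base case of the second inequality, which is the tightest constraint in the whole statement and is best confirmed by the integer comparison indicated above rather than by numerical estimation.
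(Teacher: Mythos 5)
Your proposal is correct and is essentially the paper's own argument: the same base case $9!<2^{155/8}$ and $9!<3^{95/8}$ at $n=7$, followed by the observation that each new factor $k+2$ is dominated by the corresponding multiplicative increment $2^{(4k-3)/8}$ (resp. $3^{(4k-7)/8}$) of the right-hand side, which the paper writes as a telescoping product rather than an explicit induction. Your extra verification of the auxiliary inequalities (which the paper merely asserts) and the integer comparison $(9!)^8<3^{95}$ are sound refinements of the same proof.
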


\begin{proof}
It is direct to verify that
\begin{equation}\label{eq3}
9!<2^{\frac{155}{8}}\quad\text{and}\quad9!<3^{\frac{95}{8}}.
\end{equation}
Note that
\[
k+2<2^\frac{4k-3}{8}\quad\text{and}\quad k+2<3^\frac{4k-7}{8}
\]
for every $k\geqslant8$. Then for every integer $n\geqslant8$ we have
\[
\prod_{k=8}^n(k+2)<\prod_{k=8}^n2^\frac{4k-3}{8}\quad\text{and}\quad\prod_{k=8}^n(k+2)<\prod_{k=8}^n3^\frac{4k-7}{8}.
\]
This together with~\eqref{eq3} implies that
\[
(n+2)!=9!\prod_{k=8}^n(k+2)<2^{\frac{155}{8}}\prod_{k=8}^n2^\frac{4k-3}{8}=2^{\frac{n^2}{4}-\frac{n}{8}+8}
\]
and
\[
(n+2)!=9!\prod_{k=8}^n(k+2)<3^{\frac{95}{8}}\prod_{k=8}^n3^\frac{4k-7}{8}=3^{\frac{n^2}{4}-\frac{5n}{8}+4}
\]
for all integers $n\geqslant7$, completing the proof.
\end{proof}

\begin{lemma}\label{lem9}
Suppose that $s>0$ is a constant. Then there exists a constant $M$ such that $n^{\log_2(n)}<q^{sn}$ for all prime powers $q$ and positive integers $n$ with $q^n>M$.
\end{lemma}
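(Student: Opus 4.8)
The plan is to take base-$2$ logarithms and pass to the equivalent inequality. Writing $n^{\log_2 n}=2^{(\log_2 n)^2}$ and $q^{sn}=2^{sn\log_2 q}$, the claim $n^{\log_2(n)}<q^{sn}$ becomes
\[
(\log_2 n)^2<sn\log_2 q.
\]
Since every prime power satisfies $q\geqslant2$, we have $\log_2 q\geqslant1$ throughout, which will let me discard the factor $\log_2 q$ in the range where $n$ alone is large. The essential difficulty is that two variables $q$ and $n$ are free at once; the resolution is to observe that the hypothesis $q^n>M$ is only genuinely needed for small $n$, because for large $n$ the inequality already holds for every $q$.

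First I would treat the large-$n$ regime. As $s>0$ is fixed and $(\log_2 n)^2/n\to0$, there is a constant $n_0$ (depending only on $s$) such that $(\log_2 n)^2<sn$ for all $n\geqslant n_0$. For such $n$ and any prime power $q$, using $\log_2 q\geqslant1$ we get $(\log_2 n)^2<sn\leqslant sn\log_2 q$, and hence $n^{\log_2 n}<q^{sn}$ with no constraint on $q$ whatsoever. Thus in this range the desired implication holds regardless of the choice of $M$.

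Next I would handle the bounded range $1\leqslant n<n_0$, which consists of only finitely many integers. Here I would set
\[
M=\max_{1\leqslant n<n_0}2^{(\log_2 n)^2/s},
\]
a finite constant (the $n=1$ term contributing $2^0=1$). For such an $n$ the hypothesis $q^n>M$ gives $q^n>2^{(\log_2 n)^2/s}$, and raising to the power $s$ yields $q^{sn}=(q^n)^s>2^{(\log_2 n)^2}=n^{\log_2 n}$, exactly as required.

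Combining the two regimes completes the argument: with $M$ as chosen above, any prime power $q$ and positive integer $n$ with $q^n>M$ fall into one of the two cases, and in each the inequality $n^{\log_2(n)}<q^{sn}$ holds. I expect the only point needing care is the clean separation of the two cases and the verification that a single constant $M$, obtained as a maximum over the finitely many small values of $n$, suffices uniformly; the large-$n$ estimate is a routine consequence of $(\log_2 n)^2=o(n)$.
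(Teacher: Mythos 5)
Your proof is correct and follows essentially the same strategy as the paper's: split into the regime $n\geqslant n_0$ where $(\log_2 n)^2<sn$ makes the inequality hold for every $q\geqslant2$, and the finitely many small $n$, where a single constant $M$ (a maximum over those $n$) combined with $q^n>M$ forces the bound. The only cosmetic difference is that you bound $q^n$ directly by $2^{(\log_2 n)^2/s}$, whereas the paper bounds $q$ itself by a constant $\mu$ and sets $M=\mu^N$; both are valid.
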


\begin{proof}
Since
\[
\lim_{n\rightarrow\infty}\frac{(\log_2(n))^2}{sn}=0,
\]
there exists a positive integer $N$ such that $(\log_2(n))^2<sn$ for all $n>N$. Take
\[
\mu=\max\{n^\frac{\log_2(n)}{sn}\mid1\leqslant n\leqslant N\}
\]
and $M=\mu^N$. Let $q$ be a prime power and $n$ be a positive integer such that $q^n>M$. If $n>N$, then we deduce from $(\log_2(n))^2<sn$ that
\[
n^{\log_2(n)}=2^{(\log_2(n))^2}<2^{sn}\leqslant q^{sn}.
\]
If $n\leqslant N$, then $q^N\geqslant q^n>M=\mu^N$ and so $q>\mu\geqslant n^\frac{\log_2(n)}{sn}$, which again leads to $n^{\log_2(n)}<q^{sn}$. This completes the proof.
\end{proof}

For any integer $n$ denote the set of prime divisors of $n$ by $\pi(n)$.

\begin{lemma}\label{lem10}
If $n\geqslant2$ then $|\pi(n)|\leqslant\log_2(n)$.
\end{lemma}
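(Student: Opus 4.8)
The plan is to bound $n$ from below in terms of the number of its distinct prime divisors and then take base-$2$ logarithms. Write $k=|\pi(n)|$ and let $p_1,\dots,p_k$ be the distinct primes dividing $n$. Since these primes are pairwise coprime, their product $p_1\cdots p_k$ divides $n$, and in particular $p_1\cdots p_k\leqslant n$. This is the one structural input I would use, and it is exactly where unique factorization (equivalently, pairwise coprimality of distinct primes) enters.

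Next I would use the elementary fact that every prime is at least $2$, so that $p_1\cdots p_k\geqslant 2^k$. Combining this with the divisibility bound gives $2^k\leqslant n$. Taking $\log_2$ of both sides yields $k\leqslant\log_2(n)$, which is precisely $|\pi(n)|\leqslant\log_2(n)$. The hypothesis $n\geqslant2$ guarantees $k\geqslant1$ and ensures $\log_2(n)\geqslant1$, so the inequality is meaningful (and the case $k=0$, i.e.\ $n=1$, is correctly excluded).

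I do not anticipate any genuine obstacle here, since the argument is a direct counting estimate; the only point requiring a moment's care is the passage from "each prime divides $n$" to "their product divides $n$", which must invoke coprimality rather than merely listing the divisors. Everything else is a one-line logarithm computation.
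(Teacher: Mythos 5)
Your argument is correct and is essentially identical to the paper's proof: both bound $n$ below by the product of its distinct prime divisors, then by $2^{|\pi(n)|}$, and take base-$2$ logarithms. No issues.
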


\begin{proof}
Let $p_1<p_2<\dots<p_k$ be the prime divisors of $n$, where $k=|\pi(n)|$. Then
\[
n\geqslant\prod_{i=1}^kp_i\geqslant\prod_{i=1}^k2=2^k.
\]
This shows that $k\leqslant\log_2(n)$, as desired.
\end{proof}
The undefined group theoretic notation in this paper is either standard or follows~\cite{KL1990}. In particular, we use the notation $\calC_i$ with $1\leqslant i\leqslant8$ and $\calS$ for the classes of maximal subgroups of finite simple classical groups described in~\cite{KL1990}. The following lemma is from~\cite[Proposition~6.4]{King2017}. (There are errors in the expressions of $|\Nor_G(\langle x\rangle)|$ for $G=\POm_n^+(q)$ and $G=\POm_n^-(q)$ in~\cite[Table~9]{King2017}, which have been corrected in Table~\ref{tab8} below.)

\begin{lemma}\label{lem7}
Let $G$ be a finite simple classical group and $r\in\ppd(p,ef)$, where $G$ and $e$ are given in Table~$\ref{tab8}$ with $q=p^f$, $p$ prime and $\ppd(p,ef)\neq\emptyset$. Then for any $x\in G$ of order $r$, $|\Nor_G(\langle x\rangle)|$ lies in Table~$\ref{tab8}$.
\end{lemma}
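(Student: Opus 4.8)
The plan is to exploit the representation theory of the primitive prime divisor element $x$. Since $r\in\ppd(p,ef)$, the multiplicative order of $p$ modulo $r$ is exactly $ef$, so $r$ divides $q^e-1$ but no smaller $q^i-1$; consequently $x$ is regular semisimple, and on the natural module it affords an eigenvalue whose $\bbF_q$-Galois orbit has length $e$. This forces $x$ to preserve a decomposition of the natural module into an $e$-dimensional subspace $U$ on which it acts irreducibly together with a (small, possibly zero) nondegenerate complement $W$. In particular $\langle x\rangle$ lies in a single cyclic maximal torus $T$ of $G$ attached to $e$, and $\Cen_G(x)=T$ since $x$ is regular. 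The value of $e$ in each row of Table~\ref{tab8} is precisely the one producing such an irreducible summand: the full space for $\PSL_n$ ($e=n$), $\PSU_n$ with $n$ odd ($e=2n$) and $\POm_n^-$ ($e=n$), and a summand of codimension $1$ or $2$ in the remaining rows.

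First I would compute the centralizer in the ambient group $\overline{G}\in\{\GL_n(q),\GU_n(q),\Sp_n(q),\GO_n^\varepsilon(q)\}$. By Schur's lemma the centralizer of the irreducible summand is a field-extension torus, of order $q^e-1$ in the linear case and $q^{e/2}+1$ (respectively $q^{(n-1)/2}+1$, etc.) in the unitary, symplectic and orthogonal cases, the exact form being dictated by how the invariant form restricts to $U$; to this one multiplies the centralizer on the complement $W$, which is a classical group of the same type in dimension $\dim W\in\{0,1,2\}$. Intersecting with the determinant-one condition, the form, and finally quotienting by scalars to descend to the simple group $G$ then yields $|\Cen_G(x)|=|T|$ as recorded in Table~\ref{tab8}.

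It then remains to determine the index $|\Nor_G(\langle x\rangle):\Cen_G(x)|$. Since $\Cen_G(x)=T$, this index is the order of the stabilizer of $\langle x\rangle$ in the relative Weyl group $\Nor_G(T)/T$, and equivalently it injects into $\Aut(\langle x\rangle)\cong\mathbb{Z}_{r-1}$ via the power maps $\lambda\mapsto\lambda^{p^j}$ induced on eigenvalues. The geometric Frobenius acting on the $e$-dimensional summand is realized by a genuine element of $\overline{G}$ and permutes the eigenvalues of $x$ in a single orbit of length $e$; hence this quotient is cyclic, generated by the Frobenius twist, and of order $e$ in the untwisted cases (and the appropriate divisor of $2e$ in the unitary and minus-type cases, where the twist is $\lambda\mapsto\lambda^{-q}$). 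Multiplying $|\Cen_G(x)|$ by this index gives the claimed value of $|\Nor_G(\langle x\rangle)|$.

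The hard part will be the $2$-power bookkeeping for the orthogonal groups $\POm_n^+(q)$ and $\POm_n^-(q)$---exactly the entries whose formulas had to be corrected from~\cite{King2017}. Here one must pin down the type ($+$ or $-$) of the form on both $U$ and $W$, control the successive factors of $2$ lost in passing from $\GO_n^\varepsilon(q)$ to $\SO$, to $\Omega$ (via the spinor norm), and to $\POm$ (via the center), and in addition decide whether the Frobenius twist realizing the normalizer quotient actually lies in $\Omega$ rather than merely in $\GO$. These parity and spinor-norm calculations are the delicate point; by contrast the linear, unitary and symplectic rows follow uniformly from Schur's lemma, the known structure of maximal tori in finite groups of Lie type, and the Galois action on eigenvalues.
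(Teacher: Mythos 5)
The paper offers no proof of this lemma at all: it is imported verbatim from \cite[Proposition~6.4]{King2017}, with the author only correcting the entries for $\POm_n^+(q)$ and $\POm_n^-(q)$ in King's Table~9. So the honest comparison is between your sketch and (i) the strategy of the cited source and (ii) the fragments of this computation that the paper does carry out inside the proof of Lemma~\ref{lem3}, where $\Nor_{\PGL_n(q)}(\langle x\rangle)=\C_{(q^n-1)/(q-1)}\rtimes\C_n$ and its unitary/orthogonal analogues $\overline{N}=\overline{S}.\C_{n-1}$, etc., are pinned down by exactly your argument (Singer torus containing a Sylow $r$-subgroup, order count forcing $\Nor=N$). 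Your route --- irreducible $e$-dimensional summand from the ppd condition, Schur's lemma for the centralizer, the normalizer-over-centralizer quotient embedded in $\Aut(\langle x\rangle)$ and generated by the Frobenius twist --- is the standard and correct one, and it is what both the paper and its source use.

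Three caveats on the execution. First, your opening claim $\Cen_G(x)=T$ is inconsistent with your own (correct) later statement: when the complement $W$ is $2$-dimensional the centralizer contains the full $\GO_2^-(q)\cong\D_{2(q+1)}$, which is nonabelian and is precisely the origin of the factor $2(q+1)$ in the $\POm_n^+(q)$ row. Second, the irreducible summand has $\bbF_q$-dimension $e$ only in the linear, symplectic and orthogonal rows; for $\PSU_n(q)$ the natural module lives over $\bbF_{q^2}$, the summand has dimension $e/2$ there, and the index $|\Nor_G(\langle x\rangle):\Cen_G(x)|$ comes out as $e/2$ ($=n$ or $n-1$), not $e$ and not merely ``some divisor of $2e$'' --- as written, your argument does not yet determine the multiplicative constant in those rows. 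Third, you rightly flag the spinor-norm and $2$-power bookkeeping for $\POm_n^\pm(q)$ as the delicate point, but that is exactly the part you leave unexecuted, and it is exactly where the published source went wrong (hence the paper's correction); a complete proof would have to carry out that computation, deciding which Frobenius twists and which elements of $\GO_2^-(q)$ survive in $\Omega$ and then in the quotient by the centre.
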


\begin{table}[htbp]
\caption{The triple $(G,e,|\Nor_G(\langle x\rangle)|)$ in Lemma~\ref{lem7}}\label{tab8}
\centering
\begin{tabular}{|l|l|l|l|}
\hline
$G$ & Conditions & $e$ & $|\Nor_G(\langle x\rangle)|$\\
\hline
\multirow{2}*{$\PSL_n(q)$} & \multirow{2}*{$n\geqslant2$, $(n,q)\neq(2,2)$ or $(2,3)$} & \multirow{2}*{$n$} & \multirow{2}*{$\frac{n(q^n-1)}{(q-1)\gcd(n,q-1)}$}\\
 & & & \\
\multirow{2}*{$\PSU_n(q)$} & \multirow{2}*{$n\geqslant3$ odd, $(n,q)\neq(3,2)$} & \multirow{2}*{$2n$} & \multirow{2}*{$\frac{n(q^n+1)}{(q+1)\gcd(n,q+1)}$}\\
 & & & \\
\multirow{2}*{$\PSU_n(q)$} & \multirow{2}*{$n\geqslant4$ even} & \multirow{2}*{$2(n-1)$} & \multirow{2}*{$\frac{(n-1)(q^{n-1}+1)}{\gcd(n,q+1)}$}\\
 & & & \\
\multirow{2}*{$\PSp_n(q)$} & \multirow{2}*{$n\geqslant4$ even, $(n,q)\neq(4,2)$} & \multirow{2}*{$n$} & \multirow{2}*{$\frac{n(q^\frac{n}{2}+1)}{\gcd(2,q-1)}$}\\
 & & & \\
\multirow{2}*{$\POm_n(q)$} & \multirow{2}*{$n\geqslant7$ odd, $q$ odd} & \multirow{2}*{$n-1$} & \multirow{2}*{$\frac{(n-1)(q^\frac{n-1}{2}+1)}{2}$}\\
 & & & \\
\multirow{2}*{$\POm_n^+(q)$} & \multirow{2}*{$n\geqslant8$ even} & \multirow{2}*{$n-2$} &\multirow{2}*{$\frac{2(n-2)(q^{\frac{n}{2}-1}+1)(q+1)}{\gcd(2,q-1)^2|\PSO_n^+(q)/G|}$}\\
 & & & \\
\multirow{2}*{$\POm_n^-(q)$} & \multirow{2}*{$n\geqslant8$ even} & \multirow{2}*{$n$} &\multirow{2}*{$\frac{n(q^\frac{n}{2}+1)}{\gcd(2,q-1)^2|\PSO_n^-(q)/G|}$}\\
 & & & \\
\hline
\end{tabular}
\end{table}

For a finite group $G$ denote the number of involutions in $G$ by $i_2(G)$.

\begin{lemma}\label{lem4}
\emph{(\cite[Proposition~3.1]{King2017})} Let $G$ be a finite simple classical group. Then $i_2(G)\geqslant i(G)$ with $i(G)$ given in Table~$\ref{tab2}$.
\end{lemma}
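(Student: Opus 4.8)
The plan is to bound $i_2(G)$ from below by the size of a single, carefully chosen conjugacy class of involutions, using the orbit--stabilizer identity $|t^G|=|G|/|\Cen_G(t)|$. Since $i_2(G)\geqslant|t^G|$ for every involution $t$, it suffices to exhibit, in each family of classical groups, one involution $t$ whose centralizer order can be computed exactly and then bounded from above, so that $|G|/|\Cen_G(t)|$ dominates the entry $i(G)$ of Table~\ref{tab2}.

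First I would split on the characteristic, since the structure of involutions in a classical group depends essentially on whether $q$ is odd or even. In odd characteristic an involution is semisimple and (for the linear, symplectic and orthogonal cases) is determined up to conjugacy by the dimension of its $(-1)$-eigenspace; its centralizer is then a central product of two smaller classical groups acting on the $\pm1$-eigenspaces. I would take $t$ with eigenspaces as balanced as possible (dimension about $n/2$), since this minimizes the centralizer and hence maximizes the class, and write $|\Cen_G(t)|$ explicitly as a product of two classical-group orders together with the appropriate scalar, determinant and spinor-norm correction factors needed to descend from the ambient matrix group to the simple group $G$. In even characteristic the relevant involutions are unipotent, classified by their Jordan type (the number of size-$2$ blocks, with extra type invariants in the symplectic and orthogonal cases); here I would take the unipotent involution of largest admissible rank and read off $|\Cen_G(t)|$ from the standard description of unipotent centralizers, which has the shape $q^{d}\cdot|\mathrm{Cl}_m(q)|$ for an explicit power $q^{d}$ and a smaller classical group $\mathrm{Cl}_m(q)$.

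With $|\Cen_G(t)|$ in hand for each family and each parity, the remaining work is purely arithmetic: bound $|\Cen_G(t)|$ above, substitute into $|G|/|\Cen_G(t)|$, and simplify using standard estimates for $|G|$ and for products of factors of the form $(q^i\pm1)$ to show the quotient is at least $i(G)$. The corrections coming from the centre, the outer-diagonal automorphisms, and (in the orthogonal case) the index $|\PSO_n^\epsilon(q)/G|$ must be tracked consistently, exactly as in Table~\ref{tab8}.

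The main obstacle is uniformity: producing a single clean lower bound $i(G)$ valid simultaneously across all the families, both parities of $q$, and the various small-case exclusions, while correctly handling the form-type bookkeeping (plus versus minus type of the eigenspaces in the orthogonal case, and the Hermitian structure in the unitary case) and the scalar and $\gcd$ factors separating the simple group from its ambient matrix group. Once the centralizer orders are pinned down, each individual estimate is routine, so the difficulty is organizational rather than deep.
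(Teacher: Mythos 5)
The paper does not prove this lemma at all: it is imported verbatim from \cite[Proposition~3.1]{King2017}, so there is no internal argument to compare against. Judged on its own merits, your general strategy --- bound $i_2(G)$ below by a single conjugacy class via $|t^G|=|G|/|\Cen_G(t)|$ and choose $t$ to make the centralizer small --- is the right one and is essentially how such bounds are obtained in the literature.

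However, there is a concrete gap in the odd-characteristic case. You assert that an involution of $G$ is semisimple and ``determined up to conjugacy by the dimension of its $(-1)$-eigenspace,'' with centralizer a central product of two smaller classical groups on the $\pm1$-eigenspaces. That is true for involutions of the isometry group, but not for involutions of the simple projective quotient: an involution of $\PSp_n(q)$ ($q$ odd) may lift to an element $t\in\Sp_n(q)$ with $t^2=-I_n$, whose centralizer is of type $\GL_{n/2}(q)$ or $\GU_{n/2}(q)$, of order roughly $q^{n^2/4}$, rather than $\Sp_{n/2}(q)\times\Sp_{n/2}(q)$, of order roughly $q^{n^2/4+n/2}$. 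Your balanced eigenspace choice therefore yields a class of size only about $q^{n(n+1)/2-(n^2/4+n/2)}=q^{n^2/4}$, which falls short of the required $\tfrac12 q^{n^2/4+n/2}$ in Table~\ref{tab2} by a factor of $q^{n/2}$; it is precisely the ``order $4$ over the centre'' involutions that carry the bound there. (A quick sanity check: for $\PSp_2(q)=\PSL_2(q)$ with $q$ odd the dominant involution class already has size about $q^2=q^{n^2/4+n/2}$, not $q^{n^2/4}=q$.) The same phenomenon must be checked for $\PSL_n(q)$ and $\PSU_n(q)$ with $n$ even, where involutions lifting to $t$ with $t^2=\lambda I$ have centralizers of type $\GL_{n/2}(q^2)$ or $\GU_{n/2}(q)$ --- there it happens not to change the exponent, but your write-up does not consider these classes at all, so the argument as stated does not establish the symplectic row of the table.
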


\begin{table}[htbp]
\caption{The pair $(G,i(G))$ in Lemma~\ref{lem4}}\label{tab2}
\centering
\begin{tabular}{|l|l|l|}
\hline
$G$ & Conditions & $i(G)$\\
\hline
$\PSL_n(q)$ & $n\geqslant2$, $(n,q)\neq(2,2)$ or $(2,3)$ & $\frac{1}{8}q^{\lfloor\frac{n^2}{2}\rfloor}$\\
$\PSU_n(q)$ & $n\geqslant3$, $(n,q)\neq(3,2)$ & $\frac{1}{8}q^{\lfloor\frac{n^2}{2}\rfloor}$\\
$\PSp_n(q)$ & $n\geqslant4$ even, $(n,q)\neq(4,2)$ & $\frac{1}{2}q^{\frac{n^2}{4}+\frac{n}{2}}$\\
$\POm_n(q)$ & $n\geqslant7$ odd, $q$ odd & $\frac{1}{2}q^{\frac{n^2-1}{4}}$\\
$\POm_n^+(q)$ & $n\geqslant8$ even & $\frac{1}{8}q^{\frac{n^2}{4}-1}$\\
$\POm_n^-(q)$ & $n\geqslant8$ even & $\frac{1}{8}q^{\frac{n^2}{4}-1}$\\
\hline
\end{tabular}
\end{table}

\begin{lemma}\label{lem2}
Let $G$ be a finite simple classical group. Then $i_2(\Aut(G))<j(G)$ with $j(G)$ given in Table~$\ref{tab5}$.
\end{lemma}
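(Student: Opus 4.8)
The plan is to split $\Aut(G)$ into cosets of $\Inn(G)\cong G$ and count involutions coset by coset. Writing $\Aut(G)=\bigsqcup_{\bar\sigma}G\sigma$ for a transversal $\{\sigma\}$ of $\Inn(G)$ in $\Aut(G)$, every involution of $\Aut(G)$ either lies in $G$ or projects to an involution of $\Out(G)$; hence
\[
i_2(\Aut(G))=i_2(G)+\sum_{\bar\sigma}\bigl|\{\,t\in G\sigma:t^2=1\,\}\bigr|,
\]
where $\bar\sigma$ runs over the involutions of $\Out(G)$. So I need (i) an upper bound for $i_2(G)$, (ii) an upper bound for the number of involutions in each outer coset, and (iii) control of the number of such cosets, i.e.\ of $\Out(G)$. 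For (iii) I would use the description of $\Out(G)$ from~\cite{KL1990}: it is built from diagonal automorphisms (a factor dividing $\gcd(n,q\mp1)\leq n$), field automorphisms (a cyclic factor of order $f$), and graph automorphisms (a factor of order at most $2$, except $6$ for $\POm_8^+(q)$). In particular $|\Out(G)|$, and the number of its involutions, are bounded by a small polynomial in $n$ times $f$.

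For (i) I would count the involutions of $G$ by the class equation $i_2(G)=\sum_t|G:\Cen_G(t)|$, summed over representatives of the conjugacy classes of involutions. The relevant data are standard: in odd characteristic an involution is determined by a $\pm1$-eigenspace decomposition $V=V_+\perp V_-$, so the classes are indexed by $\dim V_-$ (at most $n$ of them) and $\Cen_G(t)$ is, up to bounded index, a product of two classical groups on $V_+$ and $V_-$; in characteristic $2$ the involutions are unipotent, and both the (still $O(n)$ many) classes and their centralizer orders are given by the usual Jordan and Aschbacher--Seitz data. In every case the smallest involution centralizer has order at least a fixed power of $q$ comparable to $|G|^{1/2}$, so the largest class dominates and yields $i_2(G)$ bounded above by this $q$-power times the number of classes, i.e.\ by the leading power of $q$ times a factor polynomial in $n$.

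For (ii), fix an involutory $\bar\sigma\in\Out(G)$ and (when it exists) an involution representative in $G\sigma$; the involutions of $G\sigma$ again split into finitely many $G$-classes, and their number is $\sum_t|G:\Cen_G(t)|$ where now $t$ is a graph, field, or graph--field involution. Here $\Cen_G(t)$ is the fixed-point subgroup $G^t$, which by~\cite{KL1990} is itself a classical group of comparable rank: for a field involution it is $G$ over the subfield $\bbF_{q^{1/2}}$, of order about $|G|^{1/2}$, and for a graph involution it is a symplectic, orthogonal or unitary subgroup, again of order at least a fixed power of $q$. Thus each outer coset contributes at most a $q$-power of the same or smaller order than the inner contribution, multiplied by the (polynomial in $n$) number of classes.

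Finally I would assemble the three bounds, so that $i_2(\Aut(G))$ is at most the leading power of $q$ times a factor consisting of $|\Out(G)|$ and the various class counts. These extraneous factors are all polynomial in $n$, or products of the shape $n!$, $\gcd(n,q\mp1)$, or $|\pi(\cdot)|$, and Lemma~\ref{lem8} (to dominate factorial-type factors), together with Lemmas~\ref{lem9} and~\ref{lem10} (to dominate $n^{\log_2 n}$-type and prime-counting factors), are exactly the tools to absorb them into a modest extra power of $q$, after which the estimate should fall below the bound $j(G)$ recorded in Table~\ref{tab5}. The main obstacle is (ii) together with the uniformity required across all seven families: one must pin down, explicitly and case by case, the conjugacy classes of inner and outer involutions and the precise orders of their centralizers --- including the characteristic-$2$ unipotent involutions, the graph--field automorphisms, the triality cases for $\POm_8^+(q)$, and the small or degenerate configurations --- and then verify that the accumulated constants and lower-order $q$-powers genuinely fit under $j(G)$, rather than merely matching its leading term.
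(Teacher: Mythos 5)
Your decomposition of $i_2(\Aut(G))$ over the cosets of $\Inn(G)$ indexed by the involutions of $\Out(G)$ is sound, and it is indeed the computation that ultimately underlies this lemma. But what you have written is a plan rather than a proof, and the step you explicitly defer --- verifying that ``the accumulated constants and lower-order $q$-powers genuinely fit under $j(G)$, rather than merely matching its leading term'' --- is precisely the content of the statement. Table~\ref{tab5} claims the bound $j(G)=3q^{\dim(Y)-N}$ with the absolute constant $3$ and no polynomial-in-$n$ slack. Your proposed mechanism for disposing of the extraneous factors (the class counts, the size of $\Out(G)$, the factorial-type terms) is to absorb them ``into a modest extra power of $q$'' via Lemmas~\ref{lem8}--\ref{lem10}; any such absorption yields a bound of the shape $P(n)\,q^{\dim(Y)-N}$ or $q^{\dim(Y)-N+\varepsilon(n)}$, which is strictly weaker than the inequality being proved. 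So as it stands the argument does not establish the lemma: to finish it you would have to identify, in each of the seven families and for each type of involution (inner semisimple, inner unipotent in characteristic $2$, graph, field, graph--field), the minimal centralizer order, sum the class sizes, and check that the total falls under $3q^{\dim(Y)-N}$ --- a substantial case analysis that your sketch does not carry out.

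The paper sidesteps all of this by quoting \cite[Proposition~1.3]{LLS2002}, which already asserts $i_2(\Aut(G))<2(q^{\dim(Y)-N}+q^{\dim(Y)-N-1})$ for $Y$ the ambient simple algebraic group and $N$ the relevant root datum, whence the lemma follows from $2(1+q^{-1})\leqslant 3$. If you insist on arguing from first principles you are in effect re-proving that proposition, and all of the hard quantitative work lives there rather than in the coset decomposition you describe.
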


\begin{table}[htbp]
\caption{The pair $(G,j(G))$ in Lemma~\ref{lem2}}\label{tab5}
\centering
\begin{tabular}{|l|l|l|l|l|}
\hline
$G$ & Conditions & $\dim(Y)$ & $N$ & $j(G)$\\
\hline
$\PSL_n(q)$ & $n\geqslant2$, $(n,q)\neq(2,2)$ or $(2,3)$ & $n^2-1$ & $n(n-1)/2$ & $3q^{\frac{n^2}{2}+\frac{n}{2}-1}$\\
$\PSU_n(q)$ & $n\geqslant3$, $(n,q)\neq(3,2)$ & $n^2-1$ & $n(n-1)/2$ & $3q^{\frac{n^2}{2}+\frac{n}{2}-1}$\\
$\PSp_n(q)$ & $n\geqslant4$ even, $(n,q)\neq(4,2)$ & $n(n+1)/2$ & $n^2/4$ & $3q^{\frac{n^2}{4}+\frac{n}{2}}$\\
$\POm_n(q)$ & $n\geqslant7$ odd, $q$ odd & $n(n-1)/2$ & $(n-1)^2/4$ & $3q^{\frac{n^2-1}{4}}$\\
$\POm_n^+(q)$ & $n\geqslant8$ even & $n(n-1)/2$ & $n(n-2)/4$ & $3q^{\frac{n^2}{4}}$\\
$\POm_n^-(q)$ & $n\geqslant8$ even & $n(n-1)/2$ & $n(n-2)/4$ & $3q^{\frac{n^2}{4}}$\\
\hline
\end{tabular}
\end{table}

\begin{proof}
Let $Y$ and $N$ be as in~\cite[Proposition~1.3]{LLS2002}. Then $\dim(Y)$ and $N$ lie in Table~\ref{tab5}, from which we see that $j(G)=3q^{\dim(Y)-N}$. By~\cite[Proposition~1.3]{LLS2002},
\begin{align*}
i_2(\Aut(G))&<2(q^{\dim(Y)-N}+q^{\dim(Y)-N-1})\\
&\leqslant2\left(q^{\dim(Y)-N}+\frac{q^{\dim(Y)-N}}{2}\right)=3q^{\dim(Y)-N}.
\end{align*}
Thus we have $i_2(\Aut(G))<j(G)$, as the lemma asserts.
\end{proof}

The next lemma is from~\cite[Theorem~2.2]{LS1996}.

\begin{lemma}\label{lem6}
There is an absolute constant $c$ such that, if $G$ is a finite simple classical group and $M$ is any maximal subgroup of $G$, then
\[
\frac{i_2(M)}{i_2(G)}<c|G{:}M|^{-\frac{2}{5}}.
\]
\end{lemma}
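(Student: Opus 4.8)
The plan is to estimate $i_2(M)$ and $i_2(G)$ separately and then compare them with the index; this is essentially the line of Liebeck and Shalev, which I sketch here. The engine is a universal upper bound on involutions valid in \emph{any} finite group $H$. By the Frobenius--Schur count, the number of solutions of $x^2=1$ in $H$ equals $\sum_\chi\nu(\chi)\chi(1)$, where $\chi$ ranges over the irreducible characters of $H$ and $\nu(\chi)\in\{-1,0,1\}$ is the Frobenius--Schur indicator; hence by Cauchy--Schwarz
\[
i_2(H)+1\leq\sum_\chi\chi(1)\leq\Big(k(H)\sum_\chi\chi(1)^2\Big)^{1/2}=\big(k(H)\,|H|\big)^{1/2},
\]
where $k(H)$ denotes the number of conjugacy classes of $H$. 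Applied to a maximal subgroup $M$, and using that $k(M)$ is bounded by a fixed power of $q$ (of degree the rank of $M$), this yields $i_2(M)<(k(M)\,|M|)^{1/2}$, an overestimate exceeding $|M|^{1/2}$ only by a factor $q^{\mathrm{rk}(M)/2}$.

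Next I would pin $i_2(G)$ down from below. Lemma~\ref{lem4} (Table~\ref{tab2}) already gives $i_2(G)\geq i(G)$, and in every row $i(G)$ is a constant multiple of $q^{d/2}$, where $d=\dim G$ is the dimension of the ambient simple algebraic group; equivalently $i_2(G)\gg|G|^{1/2}$, this lower bound coming from the largest involution class. Writing $|G|\asymp q^{d}$ and $|M|\asymp q^{d_M}$ with $d_M=\dim M$, and combining the two estimates,
\[
\frac{i_2(M)}{i_2(G)}\ll q^{\frac{d_M+\mathrm{rk}(M)-d}{2}}=|G{:}M|^{-1/2}\,q^{\mathrm{rk}(M)/2}.
\]
Thus the desired inequality $i_2(M)/i_2(G)<c\,|G{:}M|^{-2/5}$ follows as soon as the codimension $d-d_M$, which governs $|G{:}M|\asymp q^{d-d_M}$, sufficiently dominates $\mathrm{rk}(M)$: writing $|G{:}M|^{-1/2}=|G{:}M|^{-2/5}|G{:}M|^{-1/10}$, it is enough that the surplus factor $|G{:}M|^{-1/10}$ absorb $q^{\mathrm{rk}(M)/2}$, for which $d-d_M\geq 5\,\mathrm{rk}(M)$ suffices.

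It then remains to run through the Aschbacher classes $\calC_1,\dots,\calC_8$ and $\calS$ of maximal subgroups of $G$ and check this codimension-versus-rank condition. For most geometric classes it is immediate, since the codimension grows quadratically in $n$ while $\mathrm{rk}(M)\leq n$; for the class $\calS$ one instead invokes Liebeck's bound $|M|<q^{3n}$, which forces $|G{:}M|$ to be so large that the crude estimate already suffices. The main obstacle is the handful of geometric subgroups of small codimension --- the $\calC_1$ stabilizers of a $1$- or $2$-space and the $\calC_2$ imprimitive subgroups with few blocks --- where $d-d_M$ is comparable to $\mathrm{rk}(M)$ and the inequality $d-d_M\geq5\,\mathrm{rk}(M)$ genuinely fails (already for a parabolic $P_1$, where $d-d_M\asymp n\asymp\mathrm{rk}(M)$). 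For these finitely many types the loose Cauchy--Schwarz bound must be replaced by a direct count: such $M$ is, up to bounded index, a product of smaller classical groups times a torus, so $i_2(M)$ can be estimated factor-by-factor and compared against the explicitly known index. Tightening these boundary cases, and fixing a single absolute constant $c$ uniformly across all families and all classes, is where the real work lies; this is exactly the content of~\cite[Theorem~2.2]{LS1996}, which I would therefore quote rather than reprove.
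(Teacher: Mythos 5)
The paper gives no proof of this lemma at all: it is imported verbatim from \cite[Theorem~2.2]{LS1996}, and your proposal ends by quoting exactly that result, so on the only operative step you and the paper coincide. Your preliminary sketch is a plausible heuristic for how the Liebeck--Shalev argument might be organised, but since you (rightly) do not rely on it, its loose ends --- the uniform bound $k(M)\leq q^{\mathrm{rk}(M)}$ for arbitrary maximal $M$ including the $\calS$-subgroups, the unproved claim $i_2(G)\geq|G|^{1/2}$ up to constants, and the deferred small-codimension cases --- are immaterial to the correctness of the citation.
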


Recall that the $\calS$-subgroups of a finite simple classical group are all almost simple.

\begin{lemma}\label{lem1}
Let $G$ and $e$ be as in Table~$\ref{tab11}$ and $r\in\ppd(p,ef)$, where $q=p^f$ with $p$ prime. Then the following hold:
\begin{enumerate}[{\rm(a)}]
\item The number of conjugacy classes of $\calS$-subgroups $M$ of $G$ with order divisible by $r$ and socle an alternating group is at most $a(G)$, where $a(G)$ lies in Table~$\ref{tab11}$; all such $\calS$-subgroups $M$ satisfy $i_2(M)<(n+2)!$.
\item The number of conjugacy classes of $\calS$-subgroups $M$ of $G$ with order divisible by $r$ and socle not an alternating group is at most $b(G)$, where $b(G)$ lies in Table~$\ref{tab11}$; all such $\calS$-subgroups $M$ satisfy $i_2(M)<q^{2n+4}$.
\end{enumerate}
\end{lemma}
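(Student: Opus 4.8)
The plan is to apply Aschbacher's theorem: every $\calS$-subgroup $M$ of $G$ is almost simple, and its socle $S$ acts faithfully, absolutely irreducibly and primitively on the natural $n$-dimensional module, via a representation that is not the natural module of any classical group (this being the defining feature of the class $\calS$). The whole argument is driven by one principle, namely that $n$ is bounded below by the minimal degree of a faithful projective representation of $S$, together with the divisibility hypothesis $r\mid|M|$. Since $r\in\ppd(p,ef)$ with $ef$ of the same order as $n$ in every row of Table~\ref{tab11}, any element of order $r$ in $M$ is a primitive prime divisor element, and the classification of subgroups of classical groups containing such elements (due to Guralnick, Penttila, Praeger and Saxl) confines $S$, and the representation affording the embedding, to a short explicit list. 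It is this confinement that will make the class counts $a(G)$ and $b(G)$ finite and explicit.

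For part (a), suppose $S=A_m$. Since $r$ is an odd prime dividing $|M|$ and $|M|$ divides $m!$, we get $r\leq m$, hence $m\geq r>ef\geq e$. On the other hand, every faithful projective representation of $A_m$ other than the fully deleted permutation module (of dimension $m-1$ or $m-2$) has degree growing at least quadratically in $m$ --- this covers both the polynomial-degree ordinary representations and the exponential-degree spin representations of the double cover --- and such a degree cannot equal $n$ once $m$ is close to $n$. Hence the embedding comes from the deleted permutation module, forcing $n\in\{m-1,m-2\}$ and so $m\leq n+2$. Combined with $m>e$ this means that in the rows with $e\leq n+1$ the value of $m$ is pinned to at most two possibilities, whereas in the unitary rows, where $e$ (namely $2n$ or $2(n-1)$) exceeds $n+2$, the constraint $e<m\leq n+2$ is unsatisfiable and no such subgroup exists (so $a(G)=0$). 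When a subgroup does occur, $m\geq7$ gives $\Aut(A_m)=\mathrm{S}_m$ and therefore $i_2(M)\leq i_2(\mathrm{S}_m)<m!\leq(n+2)!$. The bound $a(G)$ is then read off by enumerating the $G$-classes of deleted-permutation embeddings for the admissible $m$, compatibly with the form defining $G$, which is routine from \cite{KL1990}.

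For part (b) the socle $S$ is sporadic or of Lie type, and in either case the key extra input is the ppd-element of order $r$: by the Guralnick--Penttila--Praeger--Saxl classification of subgroups of $\GL_n$ containing a primitive prime divisor element, $M$ belongs to a short explicit list, from which $b(G)$ is read off and, crucially, the order of $M$ is controlled. For the $\calS$-examples in that list the ppd hypothesis, with exponent $ef$ of order $n$, forces $|M|<q^{2n+4}$, markedly better than Liebeck's generic bound $|M|<q^{3n}$ for $\calS$-subgroups; hence $i_2(M)\leq|M|-1<q^{2n+4}$. The sporadic socles contribute only finitely many classes and, embedding only in dimension $n\geq R(S)$ with $R(S)$ their minimal faithful degree, are easily accommodated by the same inequality. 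One should separate defining characteristic from cross characteristic when reading the list, but in both regimes the conclusion is the same order bound.

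The hardest part will be this non-alternating Lie type analysis: one must traverse the Guralnick--Penttila--Praeger--Saxl list of $\calS$-examples that can contain a ppd$(p,ef)$-element for $ef$ of order $n$, verify the uniform bound $|M|<q^{2n+4}$ for each (this is where the linear exponent $2n+4$, rather than the cruder $3n$, is earned), and count the resulting $G$-classes to pin down $b(G)$. The bookkeeping is delicate because the list mixes subfield subgroups, classical subgroups in non-natural representations, and twisted examples across both defining and cross characteristic, and the order bound must hold uniformly over all of them. Should any example have order between $q^{2n+4}$ and $q^{3n}$, one falls back on an involution-specific estimate, bounding $i_2(M)$ by roughly $|M|^{1/2}$ times the small number of involution classes (for classical $S$ this can be packaged via Lemma~\ref{lem2} applied to $S$ itself); this refinement must be carried out by hand, since a uniform square-root bound fails for groups with large elementary abelian $2$-subgroups, such as $\PSL_2$ in characteristic $2$ where $i_2(S)\approx|S|^{2/3}$.
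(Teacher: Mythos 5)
Your proposal is correct in outline and follows essentially the same route as the paper: the paper's entire proof consists of the observation that an alternating socle forces $m\geqslant11$ (your $m\geqslant r>ef\geqslant e$ argument) followed by citations to \cite[Proposition~5.1, Corollary~4.3 and the proof of Proposition~6.5]{King2017}, which contain precisely the deleted-permutation-module analysis for part~(a) and the Guralnick--Penttila--Praeger--Saxl ppd-element classification with the uniform bound $|M|<q^{2n+4}$ for part~(b) that you describe. The ``delicate bookkeeping'' you flag as the hardest step is exactly what those cited results of King supply, so the paper does not redo it.
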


{\small
\begin{table}[htbp]
\caption{The tuple $(G,e,a(G),b(G))$ in Lemma~\ref{lem1}}\label{tab11}
\centering
\begin{tabular}{|l|l|l|l|l|}
\hline
$G$ & Conditions & $e$ & $a(G)$ & $b(G)$\\
\hline
$\PSL_n(q)$ & $n\geqslant7$ & $n$ & $0$ & $\left(n^2+\frac{21}{4}n-1\right)\gcd(n,q-1)$\\
$\PSU_n(q)$ & $n\geqslant7$ odd & $2n$ & $0$ & $3\gcd(n,q+1)$\\
$\PSU_n(q)$ & $n\geqslant8$ even & $2(n-1)$ & $0$ & $3\gcd(n,q+1)$\\
$\PSp_n(q)$ & $n\geqslant8$ even & $n$ & $\gcd(2,q-1)$ & $\left(n^2+\frac{21}{4}n-1\right)\gcd(2,q-1)$\\
$\POm_n(q)$ & $n\geqslant9$ odd, $q$ odd & $n-1$ & $2$ & $2n^2+12n+8$\\
$\POm_n^+(q)$ & $n\geqslant10$ even & $n-2$ & $|\GO_n^+(q)|/|G|$ & $\left(\frac{1}{4}n+9\right)|\GO_n^+(q)|/|G|$\\
$\POm_n^-(q)$ & $n\geqslant8$ even & $n$ & $|\GO_n^-(q)|/|G|$ & $\left(n^2+\frac{21}{4}n-1\right)|\GO_n^-(q)|/|G|$\\
\hline
\end{tabular}
\end{table}
}

\begin{proof}
Let $M$ be an $\calS$-subgroup of $G$ with order divisible by $r$ such that $\Soc(M)=\A_m$ for some $m\geqslant5$. Then since $r$ divides $|M|$, we have $r\leqslant m$. Moreover, $r\in\ppd(p,ef)$ implies that $r>ef\geqslant e$. If $m\leqslant10$, then $r\leqslant7$ and so $e<7$, contrary to the conditions in Table~\ref{tab11}. Hence $m\geqslant11$. Then part~(a) of the lemma follows from~\cite[Proposition~5.1]{King2017} and the proof of~\cite[Proposition~6.5]{King2017}. Moreover, part(b) of the lemma follows from~\cite[Corollary~4.3~and~Proposition~5.1]{King2017}.
\end{proof}

\section{Probability for $(2,r)$-generation}

The main result of this section is the following proposition, which will be proved through Lemmas~\ref{lem11}--\ref{lem12}.

\begin{proposition}\label{prop1}
Let $G$ be a finite simple classical group and $r\in\ppd(p,ef)$, where $G$ and $e$ are given in Table~$\ref{tab14}$ with $q=p^f$ and $p$ prime. Suppose that $x$ is an element of order $r$ in $G$. Then the probability of a random involution $y$ in $G$ satisfying $G=\langle x,y\rangle$ tends to $1$ as $q^n$ tends to infinity.
\end{proposition}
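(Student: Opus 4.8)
The plan is to bound the failure probability by a union bound over the maximal subgroups of $G$ that contain $x$. If $G \neq \langle x, y\rangle$, then $\langle x, y\rangle$ lies in some maximal subgroup $M$; since $x \in M$ has order $r$ we have $r \mid |M|$, and a random involution $y$ lies in $M$ with probability $i_2(M)/i_2(G)$. Writing $N_M$ for the number of $G$-conjugates of a maximal subgroup $M$ that contain the fixed element $x$, one gets
\[
\Pr\big(G \neq \langle x,y\rangle\big) \leqslant \frac{1}{i_2(G)}\sum_{\mathcal{M}} N_M\, i_2(M),
\]
where $\mathcal{M}$ runs over the conjugacy classes of maximal subgroups with order divisible by $r$. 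By Lemma~\ref{lem4} the denominator satisfies $i_2(G) \geqslant i(G)$ with $i(G)$ a power of $q$ whose exponent is quadratic in $n$ (Table~\ref{tab2}); the goal is to show the numerator is of smaller order, uniformly as $q^n \to \infty$. I sort the classes $\mathcal{M}$ by Aschbacher type.

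For the geometric classes $\calC_1$–$\calC_8$ the primitive prime divisor hypothesis is decisive: with $e$ as large as in Table~\ref{tab14}, an element of order $r$ acts almost irreducibly on the natural module, so only a restricted list of geometric subgroups can have order divisible by $r$, and each such $M$ contains the essentially unique torus carrying $x$. For these, Lemma~\ref{lem6} gives $i_2(M)/i_2(G) < c\,|G{:}M|^{-2/5}$, and since $|G{:}M|$ is a genuine power of $q^n$ this ratio tends to $0$; the factor $N_M$ is controlled by $|\Nor_G(\langle x\rangle)|$ from Lemma~\ref{lem7}, while the number of admissible classes is bounded using $|\pi(n)| \leqslant \log_2(n)$ (Lemma~\ref{lem10}) together with Lemma~\ref{lem9}, which absorbs the resulting factor $n^{O(\log n)}$ into $q^{sn}$. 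Hence the geometric contribution is $o(1)$.

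For the $\calS$-subgroups I invoke Lemma~\ref{lem1}. Those whose socle is not alternating form at most $b(G)$ classes and satisfy $i_2(M) < q^{2n+4}$, an exponent linear in $n$ that is dwarfed by the quadratic exponent of $i(G)$; since $b(G)$ is polynomial in $n$ and $N_M$ is again bounded through Lemma~\ref{lem7}, this part vanishes. The alternating case is the crux: such $M$ have $\Soc(M) = \A_m$ with $m = O(n)$, number at most $a(G)$ classes, and carry up to $i_2(M) < (n+2)!$ involutions, a quantity whose size (by Lemma~\ref{lem8}) is a power of $q$ with the same quadratic leading exponent as $i(G)$ itself. The whole argument therefore turns on weighing this factorial against $i_2(G)$ after multiplication by $N_M$: the favourable lower-order term in Lemma~\ref{lem8} and the precise value of $|\Nor_G(\langle x\rangle)|$, equivalently $|\Cen_G(x)|$, from Lemma~\ref{lem7} must combine to leave a net decay.

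The main obstacle is exactly this last comparison. Because $x$ lies in a torus of order only about $q^{n/2}$ for the orthogonal and symplectic families, the count $N_M$ of conjugates of an alternating $\calS$-subgroup containing $x$ is itself a growing power of $q$, so the crude product $N_M\,(n+2)!$ is dangerously close to $i_2(G)$ when $q$ is small. Controlling it requires pinning down how the $G$-class of $x$ meets $M$, that is, bounding $|M \cap x^G|$ via the irreducible action of $x$, and then playing the exponent $\tfrac{n^2}{4}-\tfrac{n}{8}$ of Lemma~\ref{lem8} against that of $i(G)$ so that the linear savings overcome the torus factor. Once this is secured for each family in Table~\ref{tab14}, summing the geometric and $\calS$ estimates and dividing by the lower bound $i_2(G) \geqslant i(G)$ of Lemma~\ref{lem4} yields $\Pr(G \neq \langle x,y\rangle) \to 0$, which is the assertion of the proposition.
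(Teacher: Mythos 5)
Your architecture is the paper's: bound the failure probability by \eqref{eq8}, convert the sum over maximal overgroups of $x$ into a sum over conjugacy class representatives weighted by the number of conjugates containing $x$ (the paper does this via \cite[Corollary~6.2]{King2017}, which gives exactly your $N_M=|\Nor_G(\langle x\rangle)|/|\Nor_M(\langle x\rangle)|$), feed in Lemmas~\ref{lem7} and~\ref{lem4}, treat the geometric classes with the restricted list forced by the $\ppd$ hypothesis, and treat the $\calS$-subgroups with Lemma~\ref{lem1} split by socle type, using Lemmas~\ref{lem8}--\ref{lem10} to absorb the combinatorial factors. Two remarks on where you diverge. First, for the geometric classes you invoke Lemma~\ref{lem6}; the paper instead uses explicit upper bounds on $i_2(M)$ and lower bounds on $|\Nor_M(\langle x\rangle)|$ from \cite[Proposition~4.1]{King2017} (Tables~\ref{tab1}, \ref{tab12}, \ref{tab13}). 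Your route works but costs you the unspecified constant $c$ and still obliges you to control $N_M$ separately, so it is not actually lighter. Second, the paper must also handle the small values of $n$ in Table~\ref{tab14} that fall below the thresholds of Lemma~\ref{lem1} (e.g.\ $\PSL_n(q)$ with $4\leqslant n\leqslant 7$); there it quotes the tables of \cite{BHR2013} to get absolute constants. You omit this, though it is routine.

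The substantive issue is your final paragraph. You declare the alternating-socle $\calS$-subgroups to be an obstacle requiring a bound on $|M\cap x^G|$ ``via the irreducible action of $x$,'' and you do not supply that bound --- so as written the proof is not closed at the very step you identify as decisive. But no such input is needed: the crude estimate $N_M\leqslant|\Nor_G(\langle x\rangle)|$ already suffices, and this is exactly what the paper does. Concretely, for $G=\PSp_n(q)$ with $n\geqslant10$, Lemma~\ref{lem8} gives $(n+2)!<2^{\frac{n^2}{4}-\frac{n}{8}+9}\leqslant 2^9q^{\frac{n^2}{4}-\frac{n}{8}}$, while $|\Nor_G(\langle x\rangle)|<2nq^{\frac{n}{2}}$ and $i_2(G)\geqslant\frac{1}{2}q^{\frac{n^2}{4}+\frac{n}{2}}$; multiplying out, the alternating contribution is at most of order $nq^{\frac{n}{2}}\cdot q^{\frac{n^2}{4}-\frac{n}{8}}\cdot q^{-\frac{n^2}{4}-\frac{n}{2}}=nq^{-\frac{n}{8}}$, which is $O\bigl(\log_2(q^n)\,(q^n)^{-1/8}\bigr)$ uniformly. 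The orthogonal families work the same way with $i(G)=\frac{1}{8}q^{\frac{n^2}{4}-1}$, yielding a decay of order $n^2q^{-\frac{5n}{8}+1}$. So the quadratic exponents cancel exactly and the linear saving of $\frac{5n}{8}$ from Lemma~\ref{lem8} beats the torus factor $q^{\frac{n}{2}}$ with room to spare; your ``dangerously close'' comparison is in fact a clean $q^{-\Omega(n)}$ and the appeal to $|M\cap x^G|$ is a detour you neither need nor carry out.
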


\begin{table}[htbp]
\caption{The pair $(G,e)$ in Proposition~\ref{prop1}}\label{tab14}
\centering
\begin{tabular}{|l|l|l|}
\hline
$G$ & Conditions & $e$\\
\hline
$\PSL_n(q)$ & $n\geqslant4$, $(n,q)\neq(6,2)$ & $n$\\
$\PSU_n(q)$ & $n\geqslant3$ odd, $(n,q)\neq(3,2)$ & $2n$\\
$\PSU_n(q)$ & $n\geqslant4$ even, $(n,q)\neq(4,2)$ & $2(n-1)$\\
$\PSp_n(q)$ & $n\geqslant8$ even & $n$\\
$\POm_n(q)$ & $n\geqslant9$ odd, $q$ odd & $n-1$\\
$\POm_n^+(q)$ & $n\geqslant10$ even & $n-2$\\
$\POm_n^-(q)$ & $n\geqslant8$ even & $n$\\
\hline
\end{tabular}
\end{table}

Note that for any group $G$ and $x\in G$, if $\calM$ is the set of maximal subgroups of $G$ containing $x$, then the probability of a random involution $y$ in $G$ satisfying $G=\langle x,y\rangle$ is bounded below by
\begin{equation}\label{eq8}
1-\sum_{M\in\calM}\frac{i_2(M)}{i_2(G)}.
\end{equation}

\begin{lemma}\label{lem11}
Let $n\geqslant4$ and $q=p^f$ with prime $p$ such that $(n,q)\neq(6,2)$. Let $G=\PSL_n(q)$ and $r\in\ppd(p,nf)$. Suppose that $x$ is an element of order $r$ in $G$. Then the probability of a random involution $y$ in $G$ satisfying $G=\langle x,y\rangle$ tends to $1$ as $q^n$ tends to infinity.
\end{lemma}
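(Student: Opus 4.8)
The plan is to start from the union bound~\eqref{eq8}: writing $\calM$ for the set of maximal subgroups of $G=\PSL_n(q)$ containing $x$, it suffices to prove that $\sum_{M\in\calM}i_2(M)/i_2(G)\to 0$ as $q^n\to\infty$. The decisive feature of $x$ is that it has order $r\in\ppd(p,nf)$ with $e=n$: the multiplicative order of $q$ modulo $r$ is $n$, so an eigenvalue of $x$ generates $\bbF_{q^n}$ over $\bbF_q$ and the $\bbF_q$-minimal polynomial of $x$ on the natural module has degree $n$. Hence $x$ acts irreducibly, and moreover $r>nf\geqslant n$. First I would use these two facts to discard most Aschbacher classes: irreducibility removes every reducible ($\calC_1$) subgroup, while $r>n$ prevents $x$ from permuting nontrivially the blocks of any imprimitivity decomposition, so together with irreducibility the class $\calC_2$ is excluded as well; the subfield class $\calC_5$ and the remaining geometric classes are ruled out, or occur in only boundedly many classes of large index, by the primitive prime divisor theory underlying Lemmas~\ref{lem7} and~\ref{lem1}. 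What survives among the geometric subgroups are the field-extension subgroups in $\calC_3$ of type $\GaL_{n/s}(q^s)$ with $s\mid n$ prime, whose number I would bound by $|\pi(n)|\leqslant\log_2(n)$ via Lemma~\ref{lem10}.

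For the geometric contribution I would argue as follows. The largest relevant overgroup is the Singer-type normalizer, whose order is pinned down by Lemma~\ref{lem7} as $|\Nor_G(\langle x\rangle)|=n(q^n-1)/((q-1)\gcd(n,q-1))$; using the crude bound $i_2(M)\leqslant|M|$ together with the lower bound $i_2(G)\geqslant\tfrac18 q^{\lfloor n^2/2\rfloor}$ from Lemma~\ref{lem4}, each such ratio is at most $q^{-n^2/2+O(n)}$. Alternatively, Lemma~\ref{lem6} gives $i_2(M)/i_2(G)<c|G{:}M|^{-2/5}$ for every maximal $M$, and the field-extension subgroups have index of order $q^{\Theta(n^2)}$. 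Multiplying by the $\leqslant\log_2(n)$ conjugacy classes of $\calC_3$-subgroups, the total geometric contribution still tends to $0$.

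For the $\calS$-subgroups I would invoke Lemma~\ref{lem1}: since $a(G)=0$ here, no such subgroup has alternating socle, and the $\calS$-subgroups of order divisible by $r$ fall into at most $b(G)=(n^2+\tfrac{21}{4}n-1)\gcd(n,q-1)$ conjugacy classes, each satisfying $i_2(M)<q^{2n+4}$. For a fixed class with representative $M$, the number of conjugates containing $x$ equals $|G{:}M|\cdot|x^G\cap M|/|x^G|$; since $r>n$ forces $\langle x\rangle$ to be a cyclic Sylow $r$-subgroup of $M$, I can control $|x^G\cap M|$ by the number of Sylow $r$-subgroups of $M$ and hence by $|\Nor_G(\langle x\rangle)|$ from Lemma~\ref{lem7}. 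Combining these with $i_2(G)\geqslant\tfrac18 q^{\lfloor n^2/2\rfloor}$ bounds the total $\calS$-contribution by $b(G)$ times a factor of order $q^{2n+4-\lfloor n^2/2\rfloor+O(n)}$.

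The regime that needs the most care — and where I expect the main obstacle to lie — is $n\to\infty$ with $q$ bounded (say $q=2$), since then $i_2(G)$ grows like $q^{n^2/2}$ while the multiplicities $b(G)$, $\gcd(n,q-1)$ and $\log_2(n)$ grow with $n$; I would absorb all of these using Lemmas~\ref{lem9} and~\ref{lem10} (for instance $n^{\log_2(n)}<q^{sn}$ eventually for any fixed $s>0$), the point being that every polynomial-in-$n$ multiplicity is dwarfed by the factor $q^{-n^2/2}$, with Lemma~\ref{lem8} held in reserve for the $(n+2)!$ bound that the alternating $\calS$-case requires in the sibling lemmas. The genuinely delicate step, rather than the asymptotics, is the honest count of how many conjugates of each $\calS$-subgroup contain $x$; once that is bounded in terms of $|\Nor_G(\langle x\rangle)|$, the remaining estimates are routine bookkeeping dominated by the super-exponential growth of $i_2(G)$.
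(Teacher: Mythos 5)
Your overall strategy (the union bound~\eqref{eq8}, counting conjugates of each class representative containing $x$ via normalizers, and absorbing the polynomial-in-$n$ multiplicities into $q^{-cn}$ using Lemmas~\ref{lem9} and~\ref{lem10}) is the same as the paper's, but your enumeration of the geometric maximal overgroups of $x$ is incomplete, and this is a genuine gap rather than a bookkeeping omission. It is not true that only the $\calC_3$ field-extension subgroups survive: since the order of $q$ modulo $r$ is $n$, for $n$ even $r$ divides $q^{n/2}+1$ and hence divides the orders of the $\calC_8$ subgroups of types $\PSp_n(q)$ and $\PSO_n^-(q)$ (and, for $n$ odd and $q$ a square, of type $\PSU_n(q^{1/2})$); moreover when $n=2^k$ and $r=n+1$ the $\calC_6$ normalizers $\C_2^{2k}.\Sp_{2k}(2)$ also have order divisible by $r$. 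These are precisely the extra rows of the paper's Table~\ref{tab1}, and the $\calC_8$ family is in fact the \emph{dominant} geometric term there ($112n/q^{n/8}$, against $32\log_2(n)/q^{n/8}$ for $\calC_3$). Your fallback that such classes ``occur in only boundedly many classes of large index'' also fails: the type $\PSU_n(q^{1/2})$ occurs in $q^{1/2}$ conjugacy classes and $\PSp_n(q)$ in $n/2$, so the weighted count $|\Nor_G(\langle x\rangle)|/|\Nor_M(\langle x\rangle)|$ of conjugates containing $x$ (King's Corollary~6.2, the paper's~\eqref{eq17}) is needed for these geometric classes as well, not only for the $\calS$-subgroups.

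Two further problems. First, the crude bound $i_2(M)\leqslant|M|$ does not suffice even for the $\calC_3$ subgroup with $t=2$: there $|M|$ is of order $q^{n^2/2}$ while $i_2(G)\geqslant\frac{1}{8}q^{(n^2-1)/2}$, so the ratio is of order $q^{1/2}$ and does not decay; your claimed $q^{-n^2/2+O(n)}$ is an arithmetic slip. Your alternative via Lemma~\ref{lem6} does rescue this case (the index is of order $q^{n^2/2}$, giving $|G{:}M|^{-2/5}$ of order $q^{-n^2/5}$), and the paper instead uses the sharper estimate $i_2(M)<4q^{\frac{n^2}{2t}+\frac{n}{t}-1}$. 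Second, Lemma~\ref{lem1} only covers $\PSL_n(q)$ for $n\geqslant7$, so your appeal to $b(G)$ is out of range for $4\leqslant n\leqslant6$; for small $n$ the $\calS$-subgroups must be handled separately, as the paper does by extracting absolute constants $C_1,C_2$ from the Bray--Holt--Roney-Dougal tables.
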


\begin{proof}
Let $\calM$ be the set of maximal subgroups of $G$ containing $x$ and let $\mu$ be a set of $G$-conjugacy class representatives for $\calM$. Then by~\cite[Corollary~6.2]{King2017},
\begin{equation}\label{eq17}
\sum_{M\in\calM}\frac{i_2(M)}{i_2(G)}=\sum_{M\in\mu}\frac{|\Nor_G(\langle x\rangle)|i_2(M)}{|\Nor_M(\langle x\rangle)|i_2(G)}.
\end{equation}
Note from Lemmas~\ref{lem7} and~\ref{lem4} that
\[
|\Nor_G(\langle x\rangle)|=\frac{n(q^n-1)}{(q-1)\gcd(n,q-1)}\leqslant\frac{n(q^n-1)}{q-1}<2nq^{n-1}
\]
and
\[
i_2(G)\geqslant\frac{q^{\lfloor\frac{n^2}{2}\rfloor}}{8}\geqslant\frac{q^{\frac{n^2}{2}-\frac{1}{2}}}{8}.
\]

By~\cite[Proposition~4.1]{King2017}, every $M\in\mu\setminus\calS$ lies in Table~\ref{tab1}, where $c(M)$ is an upper bound on the number of $G$-conjugacy classes of each type, $k(M)$ is an upper bound on $i_2(M)$, $N(M)$ is a lower bound on $|\Nor_M(\langle x\rangle)|$ and $d(M)$ is an upper bound on $c(M)k(M)/N(M)$.

\begin{table}[htbp]
\caption{The tuple $(M,c(M),k(M),N(M),d(M))$ in the proof of Lemma~\ref{lem11}}\label{tab1}
\centering
\begin{tabular}{|l|l|l|l|l|l|l|}
\hline
Class & Type of $M$ & Conditions & $c(M)$ & $k(M)$ & $N(M)$ & $d(M)$\\
\hline
$\calC_3$ & $\GL_k(q^t).\C_t$ & $n=kt$, & $1$ & $4q^{\frac{n^2}{2t}+\frac{n}{t}-1}$ & $|\Nor_G(\langle x\rangle)|$ & $\frac{4q^{\frac{n^2}{4}+\frac{n}{2}-1}}{|\Nor_G(\langle x\rangle)|}$\\
 &  & $t$ prime &  &  &  &  \\
$\calC_6$ & $\C_2^{2k}.\Sp_{2k}(2)$ & $n=2^k$, & $n$ & $n^{2\log_2(n)+3}$ & $n+1$ & $n^{2\log_2(n)+3}$\\
 &  & $q$ odd, &  &  &  &  \\
 &  & $r=n+1$ &  &  &  &  \\
$\calC_8$ & $\PSp_n(q)$ & $n$ even & $n/2$ & $3q^{\frac{n^2}{4}+\frac{n}{2}}$ & $nq^\frac{n}{2}/2$ & $3q^\frac{n^2}{4}$\\
$\calC_8$ & $\PSO_n^-(q)$ & $n$ even, & $n/2$ & $3q^\frac{n^2}{4}$ & $nq^\frac{n}{2}/4$ & $q^\frac{n^2}{4}$\\
 &  & $q$ odd &  &  &  &  \\
$\calC_8$ & $\PSU_n(q^\frac{1}{2})$ & $n$ odd, & $q^\frac{1}{2}$ & $3q^{\frac{n^2}{4}+\frac{n}{4}-\frac{1}{2}}$ & $q^{\frac{n}{2}-\frac{1}{2}}/2$ & $3q^\frac{n^2}{4}$\\
 &  & $q$ square &  &  &  &  \\
\hline
\end{tabular}
\end{table}

It follows from Table~\ref{tab1} and Lemma~\ref{lem10} that
\[
\sum_{M\in\mu\cap\calC_3}\frac{|\Nor_G(\langle x\rangle)|i_2(M)}{|\Nor_M(\langle x\rangle)|i_2(G)}
\leqslant\sum_{t\in\pi(n)}\frac{4q^{\frac{n^2}{4}+\frac{n}{2}-1}}{i_2(G)}
\leqslant\frac{4\log_2(n)q^{\frac{n^2}{4}+\frac{n}{2}-1}}{i_2(G)}<\frac{32\log_2(n)}{q^\frac{n}{8}}.
\]
Moreover, we deduce from Table~\ref{tab1} that
\[
\sum_{M\in\mu\cap\calC_6}\frac{|\Nor_G(\langle x\rangle)|i_2(M)}{|\Nor_M(\langle x\rangle)|i_2(G)}
\leqslant\frac{n^{2\log_2(n)+3}|\Nor_G(\langle x\rangle)|}{i_2(G)}
<\frac{2n^{2\log_2(n)+4}q^{n-1}}{i_2(G)}<\frac{16n^{2\log_2(n)+4}}{q^n},
\]
\begin{align*}
\sum_{M\in\mu\cap\calC_8}\frac{|\Nor_G(\langle x\rangle)|i_2(M)}{|\Nor_M(\langle x\rangle)|i_2(G)}
&\leqslant\frac{3q^{\frac{n^2}{4}}|\Nor_G(\langle x\rangle)|}{i_2(G)}
+\frac{q^{\frac{n^2}{4}}|\Nor_G(\langle x\rangle)|}{i_2(G)}
+\frac{3q^{\frac{n^2}{4}}|\Nor_G(\langle x\rangle)|}{i_2(G)}\\
&=\frac{7q^{\frac{n^2}{4}}|\Nor_G(\langle x\rangle)|}{i_2(G)}
<\frac{14nq^{\frac{n^2}{4}+n-1}}{i_2(G)}
\leqslant\frac{112n}{q^\frac{n}{8}},
\end{align*}
and hence
\begin{align}\label{eq18}
\sum_{M\in\mu\setminus\calS}\frac{|\Nor_G(\langle x\rangle)|i_2(M)}{|\Nor_M(\langle x\rangle)|i_2(G)}
&<\frac{32\log_2(n)}{q^\frac{n}{8}}
+\frac{16n^{2\log_2(n)+4}}{q^n}
+\frac{112n}{q^\frac{n}{8}}\\\nonumber
&<\frac{16n}{q^\frac{n}{8}}+\frac{16n^{4\log_2(n)}}{q^n}+\frac{112n}{q^\frac{n}{8}}
=\frac{16n^{4\log_2(n)}}{q^n}+\frac{128n}{q^\frac{n}{8}}.
\end{align}

According to~\cite[Tables~8.9,~8.19,~8.25~and~8.36]{BHR2013}, for $n\leqslant7$ there exist absolute constants $C_1$ and $C_2$ such that the number of conjugacy classes of $\calS$-subgroups $M$ of $G$ with order divisible by $r$ is at most $C_1$ and all such $\calS$-subgroups $M$ satisfy $i_2(M)<C_2$. This implies that for $n\leqslant7$,
\begin{align*}
\sum_{M\in\mu\cap\calS}\frac{|\Nor_G(\langle x\rangle)|i_2(M)}{|\Nor_M(\langle x\rangle)|i_2(G)}
&<\sum_{M\in\mu\cap\calS}\frac{|\Nor_G(\langle x\rangle)|i_2(M)}{i_2(G)}\\
&<\frac{C_1C_2|\Nor_G(\langle x\rangle)|}{i_2(G)}
<\frac{2C_1C_2nq^{n-1}}{i_2(G)}
<\frac{16C_1C_2n}{q^\frac{n}{2}}
<\frac{C_1C_2n^4}{q^\frac{n}{2}}.
\end{align*}
For $n\geqslant8$, we derive from Lemma~\ref{lem1} that
\[
\sum_{M\in\mu\cap\calS}i_2(M)<\left(n^2+\frac{21}{4}n-1\right)\gcd(n,q-1)q^{2n+4}<3n^3q^{2n+4},
\]
whence
\[
\sum_{M\in\mu\cap\calS}\frac{|\Nor_G(\langle x\rangle)|i_2(M)}{|\Nor_M(\langle x\rangle)|i_2(G)}
<\sum_{M\in\mu\cap\calS}\frac{2nq^{n-1}i_2(M)}{i_2(G)}\\
<\frac{6n^4q^{3n+3}}{i_2(G)}<\frac{48n^4}{q^\frac{n}{2}}.
\]
Therefore, we have for all $n\geqslant4$ that
\begin{equation}\label{eq4}
\sum_{M\in\mu\cap\calS}\frac{|\Nor_G(\langle x\rangle)|i_2(M)}{|\Nor_M(\langle x\rangle)|i_2(G)}<\frac{C_0n^4}{q^\frac{n}{2}},
\end{equation}
where $C_0=\max\{C_1C_2,48\}$.

Combining~\eqref{eq18} and~\eqref{eq4} we conclude that
\[
\sum_{M\in\mu}\frac{|\Nor_G(\langle x\rangle)|i_2(M)}{|\Nor_M(\langle x\rangle)|i_2(G)}
<\frac{16n^{4\log_2(n)}}{q^n}+\frac{128n}{q^\frac{n}{8}}+\frac{C_0n^4}{q^\frac{n}{2}}.
\]
By Lemma~\ref{lem9}, for sufficiently large $q^n$ we have $n^{4\log_2(n)}<q^\frac{n}{2}$ and so
\[
\sum_{M\in\mu}\frac{|\Nor_G(\langle x\rangle)|i_2(M)}{|\Nor_M(\langle x\rangle)|i_2(G)}
<\frac{16}{q^\frac{n}{2}}+\frac{128n}{q^\frac{n}{8}}+\frac{C_0n^4}{q^\frac{n}{2}}
\leqslant\frac{16}{(q^n)^{\frac{1}{2}}}+\frac{128\log_2(q^n)}{(q^n)^{\frac{1}{8}}}+\frac{C_0(\log_2(q^n))^4}{(q^n)^{\frac{1}{2}}}.
\]
This in conjunction with~\eqref{eq17} shows that~\eqref{eq8} tends to $1$ as $q^n$ tends to infinity. Then the conclusion of the lemma follows readily.
\end{proof}

\begin{lemma}
Let $n\geqslant3$ be odd and $q=p^f$ with prime $p$ such that $(n,q)\neq(3,2)$. Let $G=\PSU_n(q)$ and $r\in\ppd(p,2nf)$. Suppose that $x$ is an element of order $r$ in $G$. Then the probability of a random involution $y$ in $G$ satisfying $G=\langle x,y\rangle$ tends to $1$ as $q^n$ tends to infinity.
\end{lemma}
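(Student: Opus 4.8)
The plan is to follow the proof of Lemma~\ref{lem11} almost verbatim, substituting the unitary data. Writing $\calM$ for the set of maximal subgroups of $G=\PSU_n(q)$ containing $x$ and $\mu$ for a set of $G$-conjugacy class representatives of $\calM$, the identity~\eqref{eq17} (from~\cite[Corollary~6.2]{King2017}) together with the lower bound~\eqref{eq8} reduces the whole problem to showing that
\[
\sum_{M\in\mu}\frac{|\Nor_G(\langle x\rangle)|\,i_2(M)}{|\Nor_M(\langle x\rangle)|\,i_2(G)}\longrightarrow0\quad\text{as }q^n\to\infty.
\]
Since $n$ is odd, Lemma~\ref{lem7} and Lemma~\ref{lem4} give
\[
|\Nor_G(\langle x\rangle)|=\frac{n(q^n+1)}{(q+1)\gcd(n,q+1)}\leqslant\frac{n(q^n+1)}{q+1}<2nq^{n-1},\qquad i_2(G)\geqslant\frac{1}{8}q^{\frac{n^2-1}{2}},
\]
the only change from the linear case being the exponent $\tfrac{n^2-1}{2}=\lfloor n^2/2\rfloor$.

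For the geometric classes I would read off from~\cite[Proposition~4.1]{King2017} the list of $M\in\mu\setminus\calS$ and assemble the analogue of Table~\ref{tab1} with its columns $c(M),k(M),N(M),d(M)$. The decisive structural point, which I would confirm there, is that an element of order $r$ is a primitive prime divisor of $q^{2n}-1$, hence of multiplicative order $2n$, and divides $q^n+1$; such an element lies only in subgroups possessing a torus of order divisible by $q^n+1$. This excludes the subfield subgroups of class $\calC_5$ (whose element orders divide $q_0^{2n}-1$ with $q_0<q$, of order dividing $2n$ but realized one field down) and, crucially, the orthogonal member $\GO_n(q)$ of class $\calC_8$, since for odd $n$ the largest torus of $\GO_n(q)$ has order dividing $q^{(n-1)/2}+1$ and so contains no element of order $2n$; the symplectic member of $\calC_8$ does not arise since $n$ is odd. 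Consequently the dominant geometric contribution comes from the field-extension subgroups $\GU_{n/t}(q^t).\C_t$ of class $\calC_3$, with $t$ an odd prime divisor of $n$; here $i_2(M)$ is at most of order $q^{n^2/(2t)+O(n)}\leqslant q^{n^2/6+O(n)}$, and Lemma~\ref{lem10} bounds the number of such $t$ by $\log_2(n)$, so this part is negligible. Any surviving member of class $\calC_6$ (which forces $n$ to be an odd prime power) contributes a term of shape $n^{O(\log_2 n)}/q^{n}$, exactly as the $\calC_6$ row does in Table~\ref{tab1}. Altogether, as in~\eqref{eq18},
\[
\sum_{M\in\mu\setminus\calS}\frac{|\Nor_G(\langle x\rangle)|\,i_2(M)}{|\Nor_M(\langle x\rangle)|\,i_2(G)}<\frac{c_1\,n^{c_2\log_2 n}}{q^{n}}+\frac{c_3\,n}{q^{n/8}}
\]
for absolute constants $c_1,c_2,c_3$.

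The $\calS$-subgroups I would treat by splitting on $n$ exactly as in Lemma~\ref{lem11}. For $n\in\{3,5,7\}$ the tables of~\cite{BHR2013} for $\PSU_n(q)$ supply absolute constants $C_1,C_2$ bounding both the number of $G$-classes of $\calS$-subgroups of order divisible by $r$ and their involution counts, giving a contribution $O(n^4/q^{n/2})$. For $n\geqslant9$ I would invoke Lemma~\ref{lem1}: here $a(G)=0$, so no $\calS$-subgroup with alternating socle occurs, while $b(G)=3\gcd(n,q+1)\leqslant3n$ bounds the remaining classes, each satisfying $i_2(M)<q^{2n+4}$. Multiplying by the normalizer ratio $<2nq^{n-1}$ and dividing by $i_2(G)\geqslant\tfrac18 q^{(n^2-1)/2}$ yields a bound of order $n^2q^{3n+4-(n^2-1)/2}$, which is $O(n^2/q^{n/2})$ precisely once $n\geqslant9$. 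Combining the geometric and $\calS$-estimates and applying Lemma~\ref{lem9} to absorb $n^{c_2\log_2 n}<q^{n/2}$ for large $q^n$, the total sum is dominated by finitely many negative powers of $q^n$ (up to factors logarithmic in $q^n$) and hence tends to $0$; the lemma then follows from~\eqref{eq8}.

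The hard part will be the geometric bookkeeping via~\cite[Proposition~4.1]{King2017}: correctly determining which classes $\calC_i$ actually contain an element of order $r$ and, for each surviving class, supplying the lower bound $N(M)$ on $|\Nor_M(\langle x\rangle)|$ needed to build the analogue of Table~\ref{tab1}. The exponent margin is tighter here than for $\PSL_n(q)$ because $i_2(G)\approx q^{(n^2-1)/2}$ rather than $q^{n^2/2}$; this is exactly what forces the use of~\cite{BHR2013} for $n\in\{3,5,7\}$ and of Lemma~\ref{lem1} only from $n=9$ onward, and it is the point at which the crude estimate $|\Nor_M(\langle x\rangle)|\geqslant1$ may have to be replaced by the sharper lower bounds recorded in King's tables.
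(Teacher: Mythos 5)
Your proposal follows the paper's proof essentially verbatim: the same reduction via \cite[Corollary~6.2]{King2017} and \eqref{eq8}, the same bounds from Lemmas~\ref{lem7} and~\ref{lem4}, the identification via \cite[Proposition~4.1]{King2017} of the $\calC_3$ field-extension subgroups $\GU_{n/t}(q^t).\C_t$ as the only geometric contribution (the paper finds no $\calC_6$ class here, so your hedge is unnecessary but harmless), and the same split of the $\calS$-subgroups at $n\leqslant7$ versus $n\geqslant9$ using \cite{BHR2013} and Lemma~\ref{lem1}. The only differences are immaterial constants and exponents (the paper obtains $q^{2n/3}$ where you settle for $q^{n/2}$), so the argument is correct and matches the paper's.
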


\begin{proof}
Let $\calM$ be the set of maximal subgroups of $G$ containing $x$ and let $\mu$ be a set of $G$-conjugacy class representatives for $\calM$. Then by~\cite[Corollary~6.2]{King2017},
\begin{equation}\label{eq1}
\sum_{M\in\calM}\frac{i_2(M)}{i_2(G)}=\sum_{M\in\mu}\frac{|\Nor_G(\langle x\rangle)|i_2(M)}{|\Nor_M(\langle x\rangle)|i_2(G)}.
\end{equation}
Note from Lemmas~\ref{lem7} and~\ref{lem4} that
\[
|\Nor_G(\langle x\rangle)|=\frac{n(q^n+1)}{(q+1)\gcd(n,q+1)}\leqslant\frac{n(q^n+1)}{q+1}<nq^{n-1}
\]
and
\[
i_2(G)\geqslant\frac{q^{\lfloor\frac{n^2}{2}\rfloor}}{8}=\frac{q^{\frac{n^2}{2}-\frac{1}{2}}}{8}.
\]

By~\cite[Proposition~4.1]{King2017}, the groups $M\in\mu\setminus\calS$ have type $\GU_k(q^t).\C_t$, where $n=kt$ and $t\geqslant3$ is prime. Moreover, for each odd prime divisor $t$ of $n$ there is exactly one $G$-conjugacy class of groups $M$ of type $\GU_k(q^t).\C_t$, and they satisfy
\[
i_2(M)<2q^{\frac{n^2}{2t}+\frac{n}{2}-1}\quad\text{and}\quad|\Nor_M(\langle x\rangle)|=|\Nor_G(\langle x\rangle)|.
\]
It then follows from Lemma~\ref{lem10} that
\begin{equation}\label{eq2}
\sum_{M\in\mu\setminus\calS}\frac{|\Nor_G(\langle x\rangle)|i_2(M)}{|\Nor_M(\langle x\rangle)|i_2(G)}
<\sum_{t\in\pi(n)}\frac{2q^{\frac{n^2}{2t}+\frac{n}{2}-1}}{i_2(G)}
\leqslant\frac{2\log_2(n)q^{\frac{n^2}{6}+\frac{n}{2}-1}}{i_2(G)}
\leqslant\frac{16\log_2(n)}{q^\frac{2n}{3}}.
\end{equation}

For $n\leqslant7$, we see from~\cite[Tables~8.6,~8.21~and~8.57]{BHR2013} that there exist absolute constants $C_1$ and $C_2$ such that the number of conjugacy classes of $\calS$-subgroups $M$ of $G$ with order divisible by $r$ is at most $C_1$ and all such $\calS$-subgroups $M$ satisfy $i_2(M)<C_2$. This implies that for $n\leqslant7$,
\begin{align*}
\sum_{M\in\mu\cap\calS}\frac{|\Nor_G(\langle x\rangle)|i_2(M)}{|\Nor_M(\langle x\rangle)|i_2(G)}
&<\sum_{M\in\mu\cap\calS}\frac{|\Nor_G(\langle x\rangle)|i_2(M)}{i_2(G)}\\
&<\frac{C_1C_2|\Nor_G(\langle x\rangle)|}{i_2(G)}
<\frac{C_1C_2nq^{n-1}}{i_2(G)}
<\frac{8C_1C_2n}{q^\frac{2n}{3}}
<\frac{3C_1C_2n^2}{q^\frac{2n}{3}}.
\end{align*}
For $n\geqslant9$, we derive from Lemma~\ref{lem1} that
\[
\sum_{M\in\mu\cap\calS}i_2(M)<3\gcd(n,q+1)q^{2n+4}\leqslant3nq^{2n+4}
\]
and hence
\[
\sum_{M\in\mu\cap\calS}\frac{|\Nor_G(\langle x\rangle)|i_2(M)}{|\Nor_M(\langle x\rangle)|i_2(G)}
<\sum_{M\in\mu\cap\calS}\frac{nq^{n-1}i_2(M)}{i_2(G)}<\frac{3n^2q^{3n+3}}{i_2(G)}<\frac{2n^2}{q^\frac{2n}{3}}.
\]
Therefore, we have for all odd $n\geqslant3$ that
\begin{equation}\label{eq6}
\sum_{M\in\mu\cap\calS}\frac{|\Nor_G(\langle x\rangle)|i_2(M)}{|\Nor_M(\langle x\rangle)|i_2(G)}<\frac{C_0n^2}{q^\frac{2n}{3}},
\end{equation}
where $C_0=\max\{3C_1C_2,2\}$.

Combining~\eqref{eq2} and~\eqref{eq6} we conclude that
\[
\sum_{M\in\mu}\frac{|\Nor_G(\langle x\rangle)|i_2(M)}{|\Nor_M(\langle x\rangle)|i_2(G)}
<\frac{16\log_2(n)}{q^\frac{2n}{3}}+\frac{C_0n^2}{q^\frac{2n}{3}}
<\frac{4n^2}{q^\frac{2n}{3}}+\frac{C_0n^2}{q^\frac{2n}{3}}
\leqslant\frac{(4+C_0)(\log_2(q^n))^2}{(q^n)^{\frac{2}{3}}}.
\]
This together with~\eqref{eq1} shows that~\eqref{eq8} tends to $1$ as $q^n$ tends to infinity. Then the conclusion of the lemma follows readily.
\end{proof}

\begin{lemma}
Let $n\geqslant4$ be even and $q=p^f$ with prime $p$ such that $(n,q)\neq(4,2)$. Let $G=\PSU_n(q)$ and $r\in\ppd(p,2(n-1)f)$. Suppose that $x$ is an element of order $r$ in $G$. Then the probability of a random involution $y$ in $G$ satisfying $G=\langle x,y\rangle$ tends to $1$ as $q^n$ tends to infinity.
\end{lemma}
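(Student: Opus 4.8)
The proof will follow the same template as the two preceding lemmas. I let $\calM$ be the set of maximal subgroups of $G$ containing $x$ and let $\mu$ be a set of $G$-conjugacy class representatives for $\calM$, so that by \cite[Corollary~6.2]{King2017} the lower bound~\eqref{eq8} equals $1-\sum_{M\in\mu}|\Nor_G(\langle x\rangle)|\,i_2(M)/(|\Nor_M(\langle x\rangle)|\,i_2(G))$. From Lemma~\ref{lem7} I read off $|\Nor_G(\langle x\rangle)|=(n-1)(q^{n-1}+1)/\gcd(n,q+1)<2nq^{n-1}$, and from Lemma~\ref{lem4}, since $n$ is even so that $\lfloor n^2/2\rfloor=n^2/2$, I get the clean bound $i_2(G)\geqslant q^{n^2/2}/8$. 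The task is then to show the displayed sum tends to $0$ as $q^n\to\infty$, which I do by splitting it into the geometric part over $\mu\setminus\calS$ and the almost simple part over $\mu\cap\calS$.

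For $\mu\setminus\calS$ I invoke \cite[Proposition~4.1]{King2017}. The structural feature distinguishing this case from the odd-dimensional unitary case is that an element $x$ of order $r\in\ppd(p,2(n-1)f)$ is \emph{not} irreducible: since $r\mid q^{n-1}+1$ with $n-1$ odd, $x$ acts irreducibly on a nondegenerate $(n-1)$-space and fixes the complementary nondegenerate $1$-space $W$. As $x$ acts fixed-point-freely on $W^{\perp}$, the space $W$ is exactly the fixed space of $x$ and is therefore determined by $\langle x\rangle$; hence $\Nor_G(\langle x\rangle)\leqslant\Stab_G(W)=M_1$, the $\calC_1$-subgroup of type $\GU_{n-1}(q)$. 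There is a single $G$-class of such $M_1$ (all nondegenerate $1$-spaces are isometric), and $\Nor_{M_1}(\langle x\rangle)=\Nor_G(\langle x\rangle)$, so the corresponding summand collapses to $i_2(M_1)/i_2(G)$; using $i_2(M_1)\lesssim q^{((n-1)^2-1)/2}$ this is of order $q^{-n}\to0$. Since $x$ is reducible over $\bbF_{q^2}$ it lies in no irreducible $\calC_2$, $\calC_3$ or $\calC_5$ overgroup, so $M_1$ is the dominant geometric class; any residual classes in \cite[Proposition~4.1]{King2017} contribute strictly smaller terms, which I will tabulate in a table analogous to Table~\ref{tab1}.

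For $\mu\cap\calS$ I split on $n$. For $n\geqslant8$ even, Lemma~\ref{lem1} gives $a(G)=0$ and $b(G)=3\gcd(n,q+1)$ with every relevant $\calS$-subgroup satisfying $i_2(M)<q^{2n+4}$, whence $\sum_{M\in\mu\cap\calS}i_2(M)<3nq^{2n+4}$ and the total $\calS$-contribution is at most $48n^2q^{3n+3-n^2/2}\leqslant48n^2/q^{n/2}$, the last inequality holding because $n^2/2-3n-3\geqslant n/2$ for $n\geqslant8$. For the two remaining cases $n\in\{4,6\}$ I use the explicit lists in the relevant tables of \cite{BHR2013} to obtain absolute constants $C_1,C_2$ bounding the number of relevant $\calS$-classes and each $i_2(M)$, yielding a bound of the same shape. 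Together these give $\sum_{M\in\mu\cap\calS}|\Nor_G(\langle x\rangle)|\,i_2(M)/(|\Nor_M(\langle x\rangle)|\,i_2(G))<C_0n^2/q^{n/2}$ for a suitable constant $C_0$.

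Combining the two parts bounds the full sum by an expression of the form $c_1/q^{n}+C_0n^2/q^{n/2}$ (plus the subleading geometric terms), and Lemma~\ref{lem9} lets me absorb the polynomial factor $n^2=(\log_2 q^n)^2$ against a small power of $q^n$. The sum therefore tends to $0$, so~\eqref{eq8} tends to $1$ and the lemma follows. I expect the main obstacle to be the geometric step: I must confirm through \cite[Proposition~4.1]{King2017} the precise list of maximal overgroups of the \emph{reducible} element $x$ and verify that the $\calC_1$ stabilizer $M_1$---which is absent in the irreducible odd-dimensional argument---both dominates and has contribution $i_2(M_1)/i_2(G)$ decaying like $q^{-n}$, since this $\calC_1$ term is the one genuinely new ingredient relative to the already-established lemmas.
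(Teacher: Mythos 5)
Your proposal follows essentially the same route as the paper: the same decomposition via \cite[Corollary~6.2]{King2017} into geometric and $\calS$ parts, the same identification of the single relevant geometric class as the $\calC_1$ stabiliser of the nondegenerate $1$-space fixed by the reducible element $x$ (the paper confirms via \cite[Proposition~4.1]{King2017} that this is the \emph{only} class in $\mu\setminus\calS$, so your hedge about residual classes is moot), and the same treatment of $\mu\cap\calS$ via Lemma~\ref{lem1} for $n\geqslant8$ and the \cite{BHR2013} tables for $n\in\{4,6\}$. The one slip is your estimate $i_2(M_1)\lesssim q^{((n-1)^2-1)/2}$, which conflates the lower bound of Lemma~\ref{lem4} with an upper bound; the correct upper bound (Lemma~\ref{lem2} applied to the socle $\PSU_{n-1}(q)$) is $3q^{\frac{n^2}{2}-\frac{n}{2}-1}$, so the $\calC_1$ term decays like $q^{-n/2}$ rather than $q^{-n}$ --- harmless for the conclusion, but the constant and rate in your final bound should be adjusted accordingly.
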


\begin{proof}
Let $\calM$ be the set of maximal subgroups of $G$ containing $x$ and let $\mu$ be a set of $G$-conjugacy class representatives for $\calM$. Then by~\cite[Corollary~6.2]{King2017},
\begin{equation}\label{eq7}
\sum_{M\in\calM}\frac{i_2(M)}{i_2(G)}=\sum_{M\in\mu}\frac{|\Nor_G(\langle x\rangle)|i_2(M)}{|\Nor_M(\langle x\rangle)|i_2(G)}.
\end{equation}
Note from Lemmas~\ref{lem7} and~\ref{lem4} that
\[
|\Nor_G(\langle x\rangle)|=\frac{(n-1)(q^{n-1}+1)}{\gcd(n,q+1)}\leqslant(n-1)(q^{n-1}+1)<nq^{n-1}
\]
and
\[
i_2(G)\geqslant\frac{q^{\lfloor\frac{n^2}{2}\rfloor}}{8}=\frac{q^\frac{n^2}{2}}{8}.
\]

According to~\cite[Proposition~4.1]{King2017}, the groups $M\in\mu\setminus\calS$ form one $G$-conjugacy class and satisfy
\[
i_2(M)<3q^{\frac{n^2}{2}-\frac{n}{2}-1}\quad\text{and}\quad|\Nor_M(\langle x\rangle)|=|\Nor_G(\langle x\rangle)|.
\]
It then follows that
\begin{equation}\label{eq9}
\sum_{M\in\mu\setminus\calS}\frac{|\Nor_G(\langle x\rangle)|i_2(M)}{|\Nor_M(\langle x\rangle)|i_2(G)}
<\frac{3q^{\frac{n^2}{2}-\frac{n}{2}-1}}{i_2(G)}
<\frac{24}{q^\frac{n}{2}}.
\end{equation}

For $n\leqslant6$, we see from~\cite[Tables~8.11~and~8.27]{BHR2013} that there exist absolute constants $C_1$ and $C_2$ such that the number of conjugacy classes of $\calS$-subgroups $M$ of $G$ with order divisible by $r$ is at most $C_1$ and all such $\calS$-subgroups $M$ satisfy $i_2(M)<C_2$. This implies that for $n\leqslant6$,
\begin{align*}
\sum_{M\in\mu\cap\calS}\frac{|\Nor_G(\langle x\rangle)|i_2(M)}{|\Nor_M(\langle x\rangle)|i_2(G)}
&<\sum_{M\in\mu\cap\calS}\frac{|\Nor_G(\langle x\rangle)|i_2(M)}{i_2(G)}\\
&<\frac{C_1C_2|\Nor_G(\langle x\rangle)|}{i_2(G)}
<\frac{C_1C_2nq^{n-1}}{i_2(G)}
<\frac{8C_1C_2n}{q^\frac{n}{2}}
\leqslant\frac{2C_1C_2n^2}{q^\frac{n}{2}}.
\end{align*}
For $n\geqslant8$, we derive from Lemma~\ref{lem1} that
\[
\sum_{M\in\mu\cap\calS}i_2(M)<3\gcd(n,q+1)q^{2n+4}\leqslant3nq^{2n+4}
\]
and hence
\[
\sum_{M\in\mu\cap\calS}\frac{|\Nor_G(\langle x\rangle)|i_2(M)}{|\Nor_M(\langle x\rangle)|i_2(G)}
<\sum_{M\in\mu\cap\calS}\frac{nq^{n-1}i_2(M)}{i_2(G)}<\frac{3n^2q^{3n+3}}{i_2(G)}<\frac{2n^2}{q^\frac{n}{2}}.
\]
Therefore, we have for all even $n\geqslant4$ that
\begin{equation}\label{eq10}
\sum_{M\in\mu\cap\calS}\frac{|\Nor_G(\langle x\rangle)|i_2(M)}{|\Nor_M(\langle x\rangle)|i_2(G)}<\frac{C_0n^2}{q^\frac{n}{2}},
\end{equation}
where $C_0=\max\{2C_1C_2,2\}$.

Combining~\eqref{eq9} and~\eqref{eq10} we conclude that
\[
\sum_{M\in\mu}\frac{|\Nor_G(\langle x\rangle)|i_2(M)}{|\Nor_M(\langle x\rangle)|i_2(G)}
<\frac{24}{q^\frac{n}{2}}+\frac{C_0n^2}{q^\frac{n}{2}}
\leqslant\frac{24}{(q^n)^{\frac{1}{2}}}+\frac{C_0(\log_2(q^n))^2}{(q^n)^{\frac{1}{2}}}.
\]
This together with~\eqref{eq7} shows that~\eqref{eq8} tends to $1$ as $q^n$ tends to infinity. Then the conclusion of the lemma follows readily.
\end{proof}

\begin{lemma}\label{lem13}
Let $n\geqslant8$ be even and $q=p^f$ with prime $p$. Let $G=\PSp_n(q)$ and $r\in\ppd(p,nf)$. Suppose that $x$ is an element of order $r$ in $G$. Then the probability of a random involution $y$ in $G$ satisfying $G=\langle x,y\rangle$ tends to $1$ as $q^n$ tends to infinity.
\end{lemma}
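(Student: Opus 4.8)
The plan is to follow the template established in Lemma~\ref{lem11} and the two preceding unitary lemmas, using the lower bound~\eqref{eq8}. First I would let $\calM$ be the set of maximal subgroups of $G$ containing $x$ and let $\mu$ be a set of $G$-conjugacy class representatives for $\calM$, so that \cite[Corollary~6.2]{King2017} gives
\[
\sum_{M\in\calM}\frac{i_2(M)}{i_2(G)}=\sum_{M\in\mu}\frac{|\Nor_G(\langle x\rangle)|i_2(M)}{|\Nor_M(\langle x\rangle)|i_2(G)}.
\]
Since $r\in\ppd(p,nf)$ divides $q^n-1$ but not $q^{n/2}-1$, it divides $q^{n/2}+1$, so $x$ lies in a Coxeter-type torus of order dividing $q^{n/2}+1$; this is precisely what restricts its overgroups. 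From Lemmas~\ref{lem7} and~\ref{lem4} I would record
\[
|\Nor_G(\langle x\rangle)|=\frac{n(q^{n/2}+1)}{\gcd(2,q-1)}<2nq^{n/2}\quad\text{and}\quad i_2(G)\geqslant\frac{1}{2}q^{\frac{n^2}{4}+\frac{n}{2}},
\]
whence $|\Nor_G(\langle x\rangle)|/i_2(G)<4nq^{-n^2/4}$.

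Next I would split $\mu$ into the geometric part $\mu\setminus\calS$ and the almost simple part $\mu\cap\calS$. For the geometric part I would appeal to \cite[Proposition~4.1]{King2017} to tabulate the admissible types of $M$, as in Table~\ref{tab1}, recording an upper bound $c(M)$ on the number of classes, an upper bound $k(M)$ on $i_2(M)$, and a lower bound $N(M)$ on $|\Nor_M(\langle x\rangle)|$. The field-extension subgroups in $\calC_3$ — of type $\Sp_{n/t}(q^t).\C_t$ for primes $t\mid n$, together with the type $\GU_{n/2}(q).\C_2$ carrying the torus of order $q^{n/2}+1$ — satisfy $N(M)=|\Nor_G(\langle x\rangle)|$ and have $i_2(M)$ smaller than $i_2(G)$ by a factor of order $q^{-n^2/8}$ or less; as there are at most $\log_2(n)+1$ such classes by Lemma~\ref{lem10}, their total contribution is negligible. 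When $q$ is even there is in addition a $\calC_8$ subgroup of orthogonal type $\GO_n^-(q)$, whose Coxeter torus again has order $q^{n/2}+1$; here $i_2(M)$ is of order $q^{n^2/4}$ and $|\Nor_M(\langle x\rangle)|$ of order $nq^{n/2}$, so this term is $O(q^{-n/2})$. Finally, when $n$ is a power of $2$ and $q$ is an odd prime a $\calC_6$ normalizer of a $2$-group of symplectic type may arise; since its order, hence $i_2(M)$, is at most $n^{O(\log_2(n))}$, its contribution is at most a constant times $n^{O(\log_2(n))}q^{-n^2/4}$, which Lemma~\ref{lem9} renders negligible.

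For the almost simple part the hypothesis $n\geqslant8$ lets me apply Lemma~\ref{lem1} directly, with no separate small-rank analysis via \cite{BHR2013}. Writing $a(G)=\gcd(2,q-1)\leqslant2$ and $b(G)=(n^2+\tfrac{21}{4}n-1)\gcd(2,q-1)<3n^2$, and using $i_2(M)<(n+2)!$ for alternating socle and $i_2(M)<q^{2n+4}$ otherwise, I would obtain
\[
\sum_{M\in\mu\cap\calS}\frac{|\Nor_G(\langle x\rangle)|i_2(M)}{|\Nor_M(\langle x\rangle)|i_2(G)}<\frac{2nq^{n/2}}{i_2(G)}\bigl(2(n+2)!+3n^2q^{2n+4}\bigr).
\]
The non-alternating term is of order $n^3q^{2n+4-n^2/4}\to0$. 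For the alternating term I would invoke Lemma~\ref{lem8}: for $n\geqslant8$ one has $(n+2)!<q^{n^2/4-n/8+8}$ when $q$ is even and $(n+2)!<q^{n^2/4-5n/8+4}$ when $q$ is odd, so this term is $O(nq^{-n/8+8})\to0$.

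Combining the two parts bounds the right-hand side by finitely many terms, each a power of $n$ (or a factor $n^{O(\log_2(n))}$) divided by a positive power of $q^n$; Lemma~\ref{lem9} shows each tends to $0$ as $q^n\to\infty$, so~\eqref{eq8} tends to $1$ and the lemma follows. The main obstacle is the geometric step: extracting from \cite[Proposition~4.1]{King2017} the precise list of $\calC_i$-subgroups that can contain the ppd-element $x$ and the accompanying bounds on $|\Nor_M(\langle x\rangle)|$ and $i_2(M)$ — that is, assembling the symplectic analogue of Table~\ref{tab1}, where the parity of $n$ affects whether the relevant field-extension group is $\Sp_{n/t}(q^t).\C_t$ or $\GU_{n/2}(q).\C_2$, and where the characteristic determines the presence of the $\calC_8$ and $\calC_6$ types. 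Once this table is in hand, the remaining estimates are routine and run parallel to the $\PSL_n(q)$ case.
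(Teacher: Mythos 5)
Your overall strategy is the same as the paper's (the template of Lemma~\ref{lem11} applied with the symplectic data), and your treatment of the geometric classes $\calC_3$, $\calC_6$, $\calC_8$ matches Table~\ref{tab12} of the actual proof. But there is a genuine gap in your treatment of the $\calS$-subgroups: you assert that since $n\geqslant8$ you can ``apply Lemma~\ref{lem1} directly, with no separate small-rank analysis via \cite{BHR2013}'', and that the non-alternating term is $O(n^3q^{2n+4-n^2/4})\to0$. This fails precisely at $n=8$: there $2n+4-\tfrac{n^2}{4}=20-16=4>0$, so the bound you get from $i_2(M)<q^{2n+4}$, $|\Nor_G(\langle x\rangle)|<2nq^{n/2}$ and $i_2(G)\geqslant\tfrac12 q^{n^2/4+n/2}$ is of order $n^3q^{4}$, which diverges as $q\to\infty$. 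The generic estimate $i_2(M)<q^{2n+4}$ from Lemma~\ref{lem1}(b) is simply too weak in dimension $8$ (it is comparable to $i_2(G)$ itself), and this is exactly why the paper isolates $n=8$ and invokes \cite[Table~8.49]{BHR2013} to get absolute constants $C_1,C_2$ bounding the number of relevant $\calS$-classes and their involution counts, reserving Lemma~\ref{lem1} for $n\geqslant10$ (where $2n+4-\tfrac{n^2}{4}\leqslant-1$). Without that case distinction your argument does not close.

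A secondary, fixable slip: for the alternating-socle term you write $(n+2)!<q^{n^2/4-n/8+8}$ and conclude the contribution is $O(nq^{-n/8+8})\to0$; but $-\tfrac n8+8>0$ for all $n<64$, so as written this also diverges. The additive $+8$ must be absorbed as a multiplicative constant $2^8$ (i.e.\ $(n+2)!<2^8\cdot 2^{n^2/4-n/8}\leqslant 2^8q^{n^2/4-n/8}$, as the paper does with $2^9q^{n^2/4-n/8}$), giving $O(nq^{-n/8})$, which does tend to $0$.
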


\begin{proof}
Let $\calM$ be the set of maximal subgroups of $G$ containing $x$ and let $\mu$ be a set of $G$-conjugacy class representatives for $\calM$. Then by~\cite[Corollary~6.2]{King2017},
\begin{equation}\label{eq11}
\sum_{M\in\calM}\frac{i_2(M)}{i_2(G)}=\sum_{M\in\mu}\frac{|\Nor_G(\langle x\rangle)|i_2(M)}{|\Nor_M(\langle x\rangle)|i_2(G)}.
\end{equation}
Note from Lemmas~\ref{lem7} and~\ref{lem4} that
\[
|\Nor_G(\langle x\rangle)|=\frac{n(q^\frac{n}{2}+1)}{\gcd(2,q-1)}\leqslant n(q^\frac{n}{2}+1)<2nq^\frac{n}{2}
\]
and
\[
i_2(G)\geqslant\frac{q^{\frac{n^2}{4}+\frac{n}{2}}}{2}.
\]

By~\cite[Proposition~4.1]{King2017}, every $M\in\mu\setminus\calS$ lies in Table~\ref{tab12}, where $c(M)$ is an upper bound on the number of $G$-conjugacy classes of each type, $k(M)$ is an upper bound on $i_2(M)$, $N(M)$ is a lower bound on $|\Nor_M(\langle x\rangle)|$ and $d(M)$ is an upper bound on $c(M)k(M)/N(M)$.

\begin{table}[htbp]
\caption{The tuple $(M,c(M),k(M),N(M),d(M))$ in the proof of Lemma~\ref{lem13}}\label{tab12}
\centering
\begin{tabular}{|l|l|l|l|l|l|l|}
\hline
Class & Type of $M$ & Conditions & $c(M)$ & $k(M)$ & $N(M)$ & $d(M)$\\
\hline
$\calC_3$ & $\Sp_k(q^t).\C_t$ & $n=kt$, & $1$ & $3q^{\frac{n^2}{4t}+\frac{n}{2}}$ & $|\Nor_G(\langle x\rangle)|$ & $\frac{3q^{\frac{n^2}{8}+\frac{n}{2}}}{|\Nor_G(\langle x\rangle)|}$\\
 &  & $k$ even, &  &  &  &  \\
  &  & $t$ prime &  &  &  &  \\
$\calC_3$ & $\GU_\frac{n}{2}(q).\C_2$ & $n/2$ odd, & $1$ & $2q^{\frac{n^2}{8}+\frac{n}{4}}$ & $|\Nor_G(\langle x\rangle)|$ & $\frac{2q^{\frac{n^2}{8}+\frac{n}{4}}}{|\Nor_G(\langle x\rangle)|}$\\
 &  & $q$ odd &  &  &  &  \\
$\calC_6$ & $\C_2^{2k}.\GO_{2k}^-(2)$ & $n=2^k$, & $2$ & $2n^{2\log_2(n)+1}$ & $n+1$ & $4n^{2\log_2(n)}$\\
 &  & $q=p$ odd, &  &  &  &  \\
 &  & $r=n+1$ &  &  &  &  \\
$\calC_8$ & $\PSO_n^-(q)$ & $q$ even & $1$ & $3q^{\frac{n^2}{4}}$ & $2q^\frac{n}{2}$ & $2q^{\frac{n^2}{4}-\frac{n}{2}}$\\
\hline
\end{tabular}
\end{table}

It follows from Table~\ref{tab12} and Lemma~\ref{lem10} that
\begin{align*}
\sum_{M\in\mu\cap\calC_3}\frac{|\Nor_G(\langle x\rangle)|i_2(M)}{|\Nor_M(\langle x\rangle)|i_2(G)}
&\leqslant\sum_{t\in\pi(n)}\frac{3q^{\frac{n^2}{8}+\frac{n}{2}}}{i_2(G)}+\frac{2q^{\frac{n^2}{8}+\frac{n}{4}}}{i_2(G)}\\
&\leqslant\frac{3\log_2(n)q^{\frac{n^2}{8}+\frac{n}{2}}}{i_2(G)}+\frac{q^{\frac{n^2}{8}+\frac{n}{2}}}{i_2(G)}
\leqslant\frac{6\log_2(n)+2}{q^\frac{n}{2}}.
\end{align*}
Moreover, we deduce from Table~\ref{tab12} that
\[
\sum_{M\in\mu\cap\calC_6}\frac{|\Nor_G(\langle x\rangle)|i_2(M)}{|\Nor_M(\langle x\rangle)|i_2(G)}
<\frac{4n^{2\log_2(n)}|\Nor_G(\langle x\rangle)|}{i_2(G)}
<\frac{8n^{2\log_2(n)+1}q^\frac{n}{2}}{i_2(G)}
\leqslant\frac{16n^{2\log_2(n)+1}}{q^n},
\]
\[
\sum_{M\in\mu\cap\calC_8}\frac{|\Nor_G(\langle x\rangle)|i_2(M)}{|\Nor_M(\langle x\rangle)|i_2(G)}
\leqslant\frac{2q^{\frac{n^2}{4}-\frac{n}{2}}|\Nor_G(\langle x\rangle)|}{i_2(G)}
<\frac{4nq^\frac{n^2}{4}}{i_2(G)}
\leqslant\frac{8n}{q^\frac{n}{2}},
\]
and hence
\begin{align}\label{eq12}
\sum_{M\in\mu\setminus\calS}\frac{|\Nor_G(\langle x\rangle)|i_2(M)}{|\Nor_M(\langle x\rangle)|i_2(G)}
&<\frac{6\log_2(n)+2}{q^\frac{n}{2}}
+\frac{16n^{2\log_2(n)+1}}{q^n}
+\frac{8n}{q^\frac{n}{2}}\\\nonumber
&<\frac{5n}{q^\frac{n}{2}}+\frac{4n^{3\log_2(n)}}{q^n}+\frac{8n}{q^\frac{n}{2}}
=\frac{4n^{3\log_2(n)}}{q^n}+\frac{13n}{q^\frac{n}{2}}.
\end{align}

According to~\cite[Tables~8.49]{BHR2013}, for $n=8$ there exist absolute constants $C_1$ and $C_2$ such that the number of conjugacy classes of $\calS$-subgroups $M$ of $G$ with order divisible by $r$ is at most $C_1$ and all such $\calS$-subgroups $M$ satisfy $i_2(M)<C_2$. This implies that for $n=8$,
\begin{align*}
\sum_{M\in\mu\cap\calS}\frac{|\Nor_G(\langle x\rangle)|i_2(M)}{|\Nor_M(\langle x\rangle)|i_2(G)}
&<\sum_{M\in\mu\cap\calS}\frac{|\Nor_G(\langle x\rangle)|i_2(M)}{i_2(G)}\\
&<\frac{C_1C_2|\Nor_G(\langle x\rangle)|}{i_2(G)}
<\frac{2C_1C_2nq^\frac{n}{2}}{i_2(G)}
\leqslant\frac{4C_1C_2n}{q^n}
<\frac{C_1C_2n^3}{q^n}.
\end{align*}
For $n\geqslant10$, we derive from Lemmas~\ref{lem1} and~\ref{lem8} that
\begin{align*}
\sum_{M\in\mu\cap\calS}i_2(M)&<\gcd(2,q-1)(n+2)!+\left(n^2+\frac{21}{4}n-1\right)\gcd(2,q-1)q^{2n+4}\\
&\leqslant2(n+2)!+4n^2q^{2n+4}<2^{\frac{n^2}{4}-\frac{n}{8}+9}+4n^2q^{2n+4}
\leqslant2^9q^{\frac{n^2}{4}-\frac{n}{8}}+4n^2q^{2n+4},
\end{align*}
whence
\begin{align*}
\sum_{M\in\mu\cap\calS}\frac{|\Nor_G(\langle x\rangle)|i_2(M)}{|\Nor_M(\langle x\rangle)|i_2(G)}
&<\sum_{M\in\mu\cap\calS}\frac{2nq^\frac{n}{2}i_2(M)}{i_2(G)}\\
&<\frac{2^{10}nq^{\frac{n^2}{4}+\frac{3n}{8}}}{i_2(G)}+\frac{8n^3q^{\frac{5n}{2}+4}}{i_2(G)}
<\frac{21n^3}{q^\frac{n}{10}}+\frac{8n^3}{q^\frac{n}{10}}=\frac{29n^3}{q^\frac{n}{10}}.
\end{align*}
Therefore, we have for all even $n\geqslant8$ that
\begin{equation}\label{eq13}
\sum_{M\in\mu\cap\calS}\frac{|\Nor_G(\langle x\rangle)|i_2(M)}{|\Nor_M(\langle x\rangle)|i_2(G)}<\frac{C_0n^3}{q^\frac{n}{10}},
\end{equation}
where $C_0=\max\{C_1C_2,29\}$.

Combining~\eqref{eq12} and~\eqref{eq13} we conclude that
\[
\sum_{M\in\mu}\frac{|\Nor_G(\langle x\rangle)|i_2(M)}{|\Nor_M(\langle x\rangle)|i_2(G)}
<\frac{4n^{3\log_2(n)}}{q^n}+\frac{13n}{q^\frac{n}{2}}+\frac{C_0n^3}{q^\frac{n}{10}}.
\]
By Lemma~\ref{lem9}, for sufficiently large $q^n$ we have $n^{3\log_2(n)}<q^\frac{n}{2}$ and so
\[
\sum_{M\in\mu}\frac{|\Nor_G(\langle x\rangle)|i_2(M)}{|\Nor_M(\langle x\rangle)|i_2(G)}
<\frac{4}{q^\frac{n}{2}}+\frac{13n}{q^\frac{n}{2}}+\frac{C_0n^3}{q^\frac{n}{10}}
\leqslant\frac{4}{(q^n)^{\frac{1}{2}}}+\frac{13\log_2(q^n)}{(q^n)^{\frac{1}{2}}}+\frac{C_0(\log_2(q^n))^3}{(q^n)^{\frac{1}{10}}}.
\]
This in conjunction with~\eqref{eq11} shows that~\eqref{eq8} tends to $1$ as $q^n$ tends to infinity. Then the conclusion of the lemma follows readily.
\end{proof}

\begin{lemma}
Let $n\geqslant9$ be odd and $q=p^f$ with odd prime $p$. Let $G=\POm_n(q)$ and $r\in\ppd(p,(n-1)f)$. Suppose that $x$ is an element of order $r$ in $G$. Then the probability of a random involution $y$ in $G$ satisfying $G=\langle x,y\rangle$ tends to $1$ as $q^n$ tends to infinity.
\end{lemma}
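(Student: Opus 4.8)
The plan is to follow the template of Lemmas~\ref{lem11}--\ref{lem13} in outline. Writing $\calM$ for the set of maximal subgroups of $G$ containing $x$ and $\mu$ for a set of $G$-conjugacy class representatives of $\calM$, the first step is to apply \cite[Corollary~6.2]{King2017} to obtain
\[
\sum_{M\in\calM}\frac{i_2(M)}{i_2(G)}=\sum_{M\in\mu}\frac{|\Nor_G(\langle x\rangle)|\,i_2(M)}{|\Nor_M(\langle x\rangle)|\,i_2(G)},
\]
and then to record the two controlling estimates from Lemmas~\ref{lem7} and~\ref{lem4}, namely
\[
|\Nor_G(\langle x\rangle)|=\frac{(n-1)(q^{(n-1)/2}+1)}{2}<nq^{(n-1)/2}\quad\text{and}\quad i_2(G)\geqslant\frac{q^{(n^2-1)/4}}{2}.
\]
In view of~\eqref{eq8} it then suffices to show that the displayed sum tends to $0$ as $q^n\to\infty$, and I would split it into its geometric part (over $\mu\setminus\calS$) and its $\calS$-part (over $\mu\cap\calS$).

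For the geometric part I would appeal to \cite[Proposition~4.1]{King2017} to enumerate the types of maximal subgroup that can contain the $\ppd$-element $x$. Since $r\in\ppd(p,(n-1)f)$ forces $q$ to have order $n-1$ modulo $r$ and hence $r\mid q^{(n-1)/2}+1$, the element $x$ lies in a cyclic torus of order $q^{(n-1)/2}+1$ acting irreducibly on a non-degenerate $(n-1)$-dimensional subspace of minus type while fixing a non-singular $1$-space. The governing subgroup is therefore the $\calC_1$-stabilizer of that $1$-space, of type $\GO_{n-1}^-(q)$, which already contains $\Nor_G(\langle x\rangle)$, so that $|\Nor_M(\langle x\rangle)|=|\Nor_G(\langle x\rangle)|$ for it; its contribution is $i_2(M)/i_2(G)$, and bounding $i_2(M)$ by the $j$-value of Lemma~\ref{lem2} for $\POm_{n-1}^-(q)$ gives $3q^{(n-1)^2/4}/(\tfrac12 q^{(n^2-1)/4})=6q^{-(n-1)/2}$, which tends to $0$. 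I would then tabulate the remaining geometric types—field-extension subgroups in $\calC_3$ and, if the arithmetic of $n-1$ permits, an extraspecial-normalizer subgroup in $\calC_6$—in a table of the form of Tables~\ref{tab1} and~\ref{tab12}, recording for each an upper bound $c(M)$ on the number of classes, an upper bound $k(M)$ on $i_2(M)$ from Lemma~\ref{lem2}, a lower bound $N(M)$ on $|\Nor_M(\langle x\rangle)|$, and the resulting bound on $c(M)k(M)/N(M)$. Summing over $\pi(n)$ via Lemma~\ref{lem10}, the geometric contribution should be dominated by terms of the shape $n/q^{cn}$ together with at most one term of the shape $n^{c\log_2(n)}/q^n$ arising from $\calC_6$.

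For the $\calS$-part I would split on the size of $n$. For the two small values $n\in\{9,11\}$ I would read off from the relevant tables of \cite{BHR2013} absolute constants $C_1,C_2$ bounding the number of $\calS$-classes of order divisible by $r$ and their involution counts, yielding a contribution $O(n^2/q^{(n-1)/2})$. For $n\geqslant13$ I would invoke Lemma~\ref{lem1}; here $a(G)=2\neq0$, so alternating-socle $\calS$-subgroups genuinely occur and must be bounded using $(n+2)!$, while the remaining $\calS$-subgroups contribute at most $b(G)q^{2n+4}=(2n^2+12n+8)q^{2n+4}$. Since $q$ is odd, Lemma~\ref{lem8} lets me replace $(n+2)!$ by $3^{\frac{n^2}{4}-\frac{5n}{8}+4}\leqslant q^{\frac{n^2}{4}-\frac{5n}{8}+4}$, and multiplying through by $|\Nor_G(\langle x\rangle)|/i_2(G)$ I expect a bound of the form $C_0n^3/q^{cn}$; one checks that the exponent coming from the $q^{2n+4}$ term, namely $-\tfrac{n^2}{4}+\tfrac{5n}{2}+\tfrac{15}{4}$, is negative precisely for $n\geqslant13$, which is exactly why the cases $n\in\{9,11\}$ must be handled separately.

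Finally I would combine the two contributions, use Lemma~\ref{lem9} to absorb any $n^{c\log_2(n)}/q^n$ term into $q^{-n/2}$ once $q^n$ is large, rewrite the resulting bounds in terms of $q^n$ using $n\leqslant\log_2(q^n)$, and conclude that the whole sum tends to $0$; the lemma then follows from~\eqref{eq8}. The main obstacle, as in Lemma~\ref{lem13}, is the $\calS$-subgroup bookkeeping: one must confirm via Lemma~\ref{lem1} that alternating socles really arise (so that the factorial estimate of Lemma~\ref{lem8}, rather than the cruder $q^{2n+4}$ bound, is what controls them) and that no $\calS$-subgroup is overlooked. A close second is extracting from \cite[Proposition~4.1]{King2017} the precise list of geometric subgroups of the odd-dimensional orthogonal group containing $x$—in particular deciding whether a $\calC_6$ term is present, and hence whether Lemma~\ref{lem9} is needed at all.
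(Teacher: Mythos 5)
Your outline reproduces the paper's template exactly — the reduction via \cite[Corollary~6.2]{King2017}, the two controlling estimates $|\Nor_G(\langle x\rangle)|<nq^{\frac{n}{2}-\frac{1}{2}}$ and $i_2(G)\geqslant\tfrac{1}{2}q^{\frac{n^2-1}{4}}$, the split of the $\calS$-part at $n\leqslant11$ (BHR tables) versus $n\geqslant13$ (Lemma~\ref{lem1} with $a(G)=2$ forcing the factorial bound of Lemma~\ref{lem8}) — and the $\calS$-analysis, including your observation about why $n\in\{9,11\}$ must be treated separately, matches the paper. The gap is in the geometric part. By \cite[Proposition~4.1]{King2017} the subgroups in $\mu\setminus\calS$ here are of exactly two types: the $\calC_1$-subgroup of type $\GO_{n-1}^-(q)\times\GO_1(q)$, which you identify and bound correctly, and the $\calC_2$-subgroup of type $\GO_1(q)\wr\Sy_n$, which you omit. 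There are no $\calC_3$ or $\calC_6$ terms: a $\calC_3$-subgroup of type $\GO_{n/t}(q^t)$ with $t\mid n$ odd prime has order built from factors $q^{ti}\pm1$ with $ti\leqslant(n-t)/2<(n-1)/2$, so $r\in\ppd(p,(n-1)f)$ cannot divide its order, and no $\calC_6$-subgroup arises for odd-dimensional orthogonal groups in this setting. Consequently Lemma~\ref{lem9} is never needed in this lemma — your worry on that score is moot.

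The omitted $\calC_2$-term is not harmless: it is the delicate term of the geometric part. Those subgroups satisfy only $i_2(M)\leqslant2^{n-1}n!$ and $|\Nor_M(\langle x\rangle)|\geqslant n$ (at most two classes), so their contribution is of order $2^{n}n!\,|\Nor_G(\langle x\rangle)|/(n\,i_2(G))$, and controlling it requires precisely the estimate $n!<3^{\frac{n^2}{4}-\frac{13n}{8}+\frac{25}{4}}\leqslant3^{\frac{25}{4}}q^{\frac{n^2}{4}-\frac{13n}{8}}$ from Lemma~\ref{lem8} (valid since $q\geqslant3$) that you reserve exclusively for the alternating-socle $\calS$-subgroups. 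This yields a bound of shape $\mathrm{const}/q^{n/8}$, which is in fact the dominant geometric term. As written, your plan would pass over this subgroup entirely (you hypothesize $\calC_3$/$\calC_6$ terms in its place), so the geometric bound is incomplete until the $\GO_1(q)\wr\Sy_n$ term is added and handled by the factorial estimate.
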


\begin{proof}
Let $\calM$ be the set of maximal subgroups of $G$ containing $x$ and let $\mu$ be a set of $G$-conjugacy class representatives for $\calM$. Then by~\cite[Corollary~6.2]{King2017},
\begin{equation}\label{eq14}
\sum_{M\in\calM}\frac{i_2(M)}{i_2(G)}=\sum_{M\in\mu}\frac{|\Nor_G(\langle x\rangle)|i_2(M)}{|\Nor_M(\langle x\rangle)|i_2(G)}.
\end{equation}
Note from Lemmas~\ref{lem7} and~\ref{lem4} that
\[
|\Nor_G(\langle x\rangle)|=\frac{(n-1)(q^\frac{n-1}{2}+1)}{2}<nq^{\frac{n}{2}-\frac{1}{2}}
\]
and
\[
i_2(G)\geqslant\frac{q^{\frac{n^2}{4}-\frac{1}{4}}}{2}.
\]

By~\cite[Proposition~4.1]{King2017}, the groups $M\in\mu\setminus\calS$ have type $\GO_{n-1}^-(q)\times\GO_1(q)$ or $\GO_1(q)\wr\Sy_n$. Moreover, there is precisely one $G$-conjugacy class of subgroups $M$ of the first type, and they satisfy
\[
i_2(M)<6q^{\frac{n^2}{4}-\frac{n}{2}+\frac{1}{4}}\quad\text{and}\quad|\Nor_M(\langle x\rangle)|=|\Nor_G(\langle x\rangle)|;
\]
there are at most two $G$-conjugacy classes of subgroups $M$ of the second type, and they satisfy
\[
i_2(M)\leqslant2^{n-1}n!\quad\text{and}\quad|\Nor_M(\langle x\rangle)|\geqslant n.
\]
Applying Lemma~\ref{lem8} we obtain
\[
n!<3^{\frac{(n-2)^2}{4}-\frac{5(n-2)}{8}+4}=3^{\frac{n^2}{4}-\frac{13n}{8}+\frac{25}{4}}\leqslant3^\frac{25}{4}q^{\frac{n^2}{4}-\frac{13n}{8}}
\]
and so
\[
2^n|\Nor_G(\langle x\rangle)|n!<2^9q^{n-9}\cdot nq^{\frac{n}{2}-\frac{1}{2}}\cdot3^\frac{25}{4}q^{\frac{n^2}{4}-\frac{13n}{8}}
=2^93^\frac{25}{4}nq^{\frac{n^2}{4}-\frac{n}{8}-\frac{19}{2}}.
\]
Then it follows that
\begin{align}\label{eq15}
\sum_{M\in\mu\setminus\calS}\frac{|\Nor_G(\langle x\rangle)|i_2(M)}{|\Nor_M(\langle x\rangle)|i_2(G)}
&<\frac{6q^{\frac{n^2}{4}-\frac{n}{2}+\frac{1}{4}}}{i_2(G)}+\frac{2|\Nor_G(\langle x\rangle)|2^{n-1}n!}{ni_2(G)}\\\nonumber
&<\frac{6q^{\frac{n^2}{4}-\frac{n}{2}+\frac{1}{4}}}{i_2(G)}+\frac{2^93^\frac{25}{4}q^{\frac{n^2}{4}-\frac{n}{8}-\frac{19}{2}}}{i_2(G)}
<\frac{1}{q^\frac{n}{8}}+\frac{38}{q^\frac{n}{8}}=\frac{39}{q^\frac{n}{8}}.
\end{align}

For $n\leqslant11$, we see from~\cite[Tables~8.59~and~8.75]{BHR2013} that there exist absolute constants $C_1$ and $C_2$ such that the number of conjugacy classes of $\calS$-subgroups $M$ of $G$ with order divisible by $r$ is at most $C_1$ and all such $\calS$-subgroups $M$ satisfy $i_2(M)<C_2$. This implies that for $n\leqslant11$,
\begin{align*}
\sum_{M\in\mu\cap\calS}\frac{|\Nor_G(\langle x\rangle)|i_2(M)}{|\Nor_M(\langle x\rangle)|i_2(G)}
&<\sum_{M\in\mu\cap\calS}\frac{|\Nor_G(\langle x\rangle)|i_2(M)}{i_2(G)}\\
&<\frac{C_1C_2|\Nor_G(\langle x\rangle)|}{i_2(G)}
<\frac{C_1C_2nq^{\frac{n}{2}-\frac{1}{2}}}{i_2(G)}
<\frac{C_1C_2n}{q^\frac{n}{8}}
<\frac{C_1C_2n^3}{q^\frac{n}{8}}.
\end{align*}
For $n\geqslant13$, we derive from Lemmas~\ref{lem1} and~\ref{lem8} that
\begin{align*}
\sum_{M\in\mu\cap\calS}i_2(M)&<2(n+2)!+(2n^2+12n+8)q^{2n+4}\\
&<2\cdot3^4\cdot3^{\frac{n^2}{4}-\frac{5n}{8}}+3n^2q^{2n+4}
<n^2q^{\frac{n^2}{4}-\frac{5n}{8}}+3n^2q^{2n+4}
\end{align*}
and hence
\begin{align*}
\sum_{M\in\mu\cap\calS}\frac{|\Nor_G(\langle x\rangle)|i_2(M)}{|\Nor_M(\langle x\rangle)|i_2(G)}
&<\sum_{M\in\mu\cap\calS}\frac{nq^{\frac{n}{2}-\frac{1}{2}}i_2(M)}{i_2(G)}\\
&<\frac{n^3q^{\frac{n^2}{4}-\frac{n}{8}-\frac{1}{2}}}{i_2(G)}+\frac{3n^3q^{\frac{5n}{2}+\frac{7}{2}}}{i_2(G)}
<\frac{2n^3}{q^\frac{n}{8}}+\frac{n^3}{q^\frac{n}{8}}=\frac{3n^3}{q^\frac{n}{8}}.
\end{align*}
Therefore, we have for all odd $n\geqslant9$ that
\begin{equation}\label{eq16}
\sum_{M\in\mu\cap\calS}\frac{|\Nor_G(\langle x\rangle)|i_2(M)}{|\Nor_M(\langle x\rangle)|i_2(G)}<\frac{C_0n^3}{q^\frac{n}{8}},
\end{equation}
where $C_0=\max\{C_1C_2,3\}$.

Combining~\eqref{eq15} and~\eqref{eq16} we conclude that
\[
\sum_{M\in\mu}\frac{|\Nor_G(\langle x\rangle)|i_2(M)}{|\Nor_M(\langle x\rangle)|i_2(G)}
<\frac{39}{q^\frac{n}{8}}+\frac{C_0n^3}{(q^n)^{\frac{1}{8}}}
\leqslant\frac{39}{q^\frac{n}{8}}+\frac{C_0(\log_3(q^n))^3}{(q^n)^{\frac{1}{8}}}.
\]
This together with~\eqref{eq14} shows that~\eqref{eq8} tends to $1$ as $q^n$ tends to infinity. Then the conclusion of the lemma follows readily.
\end{proof}

\begin{lemma}\label{lem14}
Let $n\geqslant10$ be even and $q=p^f$ with prime $p$. Let $G=\POm_n^+(q)$ and $r\in\ppd(p,(n-2)f)$. Suppose that $x$ is an element of order $r$ in $G$. Then the probability of a random involution $y$ in $G$ satisfying $G=\langle x,y\rangle$ tends to $1$ as $q^n$ tends to infinity.
\end{lemma}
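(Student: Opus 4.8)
The plan is to follow the template of Lemmas~\ref{lem11}--\ref{lem13}. Write $\calM$ for the set of maximal subgroups of $G=\POm_n^+(q)$ containing $x$ and let $\mu$ be a set of $G$-conjugacy class representatives for $\calM$. By~\cite[Corollary~6.2]{King2017} the sum subtracted in~\eqref{eq8} equals $\sum_{M\in\mu}|\Nor_G(\langle x\rangle)|i_2(M)/(|\Nor_M(\langle x\rangle)|i_2(G))$, so it suffices to prove that this tends to $0$ as $q^n\to\infty$. From Lemma~\ref{lem7} (Table~\ref{tab8}), using $\gcd(2,q-1)^2|\PSO_n^+(q)/G|\geqslant1$ and $(q^{\frac{n}{2}-1}+1)(q+1)<2q^{\frac{n}{2}}$ for $n\geqslant10$, I obtain $|\Nor_G(\langle x\rangle)|<4nq^{\frac{n}{2}}$; and Lemma~\ref{lem4} (Table~\ref{tab2}) gives $i_2(G)\geqslant\frac18 q^{\frac{n^2}{4}-1}$.

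For the geometric subgroups I would apply~\cite[Proposition~4.1]{King2017} to list the members of $\mu\setminus\calS$ in a table of the same shape as Tables~\ref{tab1} and~\ref{tab12}: the dominant entry is the $\calC_1$-stabilizer of a nondegenerate $(n-2)$-space of minus type, which carries the torus of order divisible by $q^{\frac{n}{2}-1}+1$ in which $x$ lies and satisfies $|\Nor_M(\langle x\rangle)|=|\Nor_G(\langle x\rangle)|$, together with the field-extension ($\calC_3$), imprimitive ($\calC_2$) and $\calC_6$ classes permitted by the primitive prime divisor condition. For each type I would record an upper bound on the number of classes, an upper bound on $i_2(M)$ and a lower bound on $|\Nor_M(\langle x\rangle)|$, and, controlling the number of prime-degree field extensions by Lemma~\ref{lem10}, bound $\sum_{M\in\mu\setminus\calS}$ by a fixed negative power of $q$ plus a term of the shape $n^{c\log_2 n}/q^n$, the latter absorbed through Lemma~\ref{lem9}. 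Any imprimitive type $\GO_1(q)\wr\Sym_n$ contributing a factor $n!$ occurs only for odd $q$, where $q\geqslant3$ and Lemma~\ref{lem8} controls it exactly as in the odd-dimensional orthogonal case.

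For the $\calS$-subgroups I would split on $n$. The two smallest dimensions $n\in\{10,12\}$ are covered by the tables of~\cite{BHR2013}, which furnish absolute constants $C_1,C_2$ bounding the number of relevant classes and each $i_2(M)$, yielding a contribution that tends to $0$. For $n\geqslant14$ I would invoke Lemma~\ref{lem1}. Since $|\GO_n^+(q)|/|G|$ is bounded by an absolute constant, both $a(G)$ and $b(G)$ are $O(n)$; the non-alternating $\calS$-subgroups then contribute at most a constant times $nq^{2n+4}\cdot|\Nor_G(\langle x\rangle)|/i_2(G)$, whose $q$-exponent is $-\frac{n^2}{4}+O(n)$ and so tends to $0$.

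The main obstacle is the alternating $\calS$-subgroups. By Lemma~\ref{lem1}(a) there are $O(1)$ classes of these, each with $i_2(M)<(n+2)!$, so their total contribution is at most a constant multiple of $n(n+2)!\,q^{\frac{n}{2}+1-\frac{n^2}{4}}$. Here, in contrast to the symplectic and odd-orthogonal cases, the lower bound for $i_2(G)$ carries no linear term in its exponent and $q$ may equal $2$, so the $3$-power estimate of Lemma~\ref{lem8} is unavailable. If one simply replaced $(n+2)!$ by $2^{\frac{n^2}{4}-\frac{n}{8}+8}$ from Lemma~\ref{lem8}, a factor $q^{\frac{3n}{8}+1}$ would survive and fail to tend to $0$; this crude bound is therefore fatal here. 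Instead I would establish, in the spirit of Lemma~\ref{lem9}, that $(n+2)!<q^{\frac{n^2}{8}}$ once $q^n$ is large enough---which holds because $\log\big((n+2)!\big)=O(n\log n)=o(n^2)$---so that the alternating contribution is at most a constant times $nq^{\frac{n}{2}-\frac{n^2}{8}}$, which tends to $0$. Making this comparison uniform across the joint limit $q^n\to\infty$ is the only genuinely delicate point, after which the conclusion follows exactly as in the previous lemmas.
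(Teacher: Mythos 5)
Your proposal follows the paper's own template (the bound \eqref{eq8} via \cite[Corollary~6.2]{King2017}, Lemmas~\ref{lem7} and~\ref{lem4} for $|\Nor_G(\langle x\rangle)|$ and $i_2(G)$, a table of geometric classes from \cite[Proposition~4.1]{King2017}, \cite{BHR2013} for small $n$ and Lemma~\ref{lem1} for large $n$), and your diagnosis of the crux is exactly right: since $i_2(\POm_n^+(q))$ is only bounded below by $\frac{1}{8}q^{\frac{n^2}{4}-1}$, with no positive linear term in the exponent, and $q$ may equal $2$, applying Lemma~\ref{lem8} at $n$ leaves a surviving factor $q^{\frac{3n}{8}+1}$ and fails. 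Where you and the paper part ways is in how this is repaired. The paper does not introduce a new asymptotic lemma; it applies Lemma~\ref{lem8} at $n-2$, writing $n!=((n-2)+2)!<2^{\frac{(n-2)^2}{4}-\frac{n-2}{8}+8}\leqslant 2^{\frac{37}{4}}q^{\frac{n^2}{4}-\frac{9n}{8}}$ and then multiplying by $(n+1)(n+2)$ to get $(n+2)!<2^{10}n^2q^{\frac{n^2}{4}-\frac{9n}{8}}$; the shift $n\mapsto n-2$ lowers the exponent by essentially $n$, which absorbs the $q^{\frac{n}{2}}$ from $|\Nor_G(\langle x\rangle)|$ and yields a contribution at most $2913n^3q^{-\frac{n}{12}}$, valid for every admissible pair $(n,q)$ with $n\geqslant12$. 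Your alternative, $(n+2)!<q^{\frac{n^2}{8}}$ for $q^n$ sufficiently large, is also correct and can be made uniform in the joint limit: $\log_2((n+2)!)\leqslant 4n\log_2(n)$ for $n\geqslant 4$, so the inequality reduces to $n^{32}<q^n$, which follows from $n\leqslant\log_2(q^n)$ once $q^n$ exceeds an absolute constant, exactly as in the proof of Lemma~\ref{lem9}. The paper's route buys an explicit bound with no ``sufficiently large'' proviso at this step, at the cost of a less transparent factorial manipulation; your route is conceptually cleaner but requires stating and proving one additional auxiliary lemma. Both close the argument.
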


\begin{proof}
Let $\calM$ be the set of maximal subgroups of $G$ containing $x$ and let $\mu$ be a set of $G$-conjugacy class representatives for $\calM$. Then by~\cite[Corollary~6.2]{King2017},
\begin{equation}\label{eq19}
\sum_{M\in\calM}\frac{i_2(M)}{i_2(G)}=\sum_{M\in\mu}\frac{|\Nor_G(\langle x\rangle)|i_2(M)}{|\Nor_M(\langle x\rangle)|i_2(G)}.
\end{equation}
Note from Lemmas~\ref{lem7} and~\ref{lem4} that
\[
|\Nor_G(\langle x\rangle)|=\frac{2(n-2)(q^{\frac{n}{2}-1}+1)(q+1)}{\gcd(2,q-1)^2|\PSO_n^+(q)/G|}\leqslant(n-2)(q^{\frac{n}{2}-1}+1)(q+1)<2nq^\frac{n}{2}
\]
and
\[
i_2(G)\geqslant\frac{q^{\frac{n^2}{4}-1}}{8}.
\]

By~\cite[Proposition~4.1]{King2017}, every $M\in\mu\setminus\calS$ lies in Table~\ref{tab13}, where $c(M)$ is an upper bound on the number of $G$-conjugacy classes of each type, $k(M)$ is an upper bound on $i_2(M)$, $N(M)$ is a lower bound on $|\Nor_M(\langle x\rangle)|$ and $d(M)$ is an upper bound on $c(M)k(M)/N(M)$.

\begin{table}[htbp]
\caption{The tuple $(M,c(M),k(M),N(M),d(M))$ in the proof of Lemma~\ref{lem14}}\label{tab13}
\centering
\begin{tabular}{|l|l|l|l|l|l|l|}
\hline
Class & Type of $M$ & Conditions & $c(M)$ & $k(M)$ & $N(M)$ & $d(M)$\\
\hline
$\calC_1$ & $\GO_{n-2}^-(q)\times\GO_2^-(q)$ &  & $1$ & $6q^{\frac{n^2}{4}-n+2}$ & $|\Nor_G(\langle x\rangle)|$ & $\frac{6q^{\frac{n^2}{4}-n+2}}{|\Nor_G(\langle x\rangle)|}$\\
$\calC_1$ & $\GO_{n-1}(q)\times\GO_1(q)$ &  & $2$ & $6q^{\frac{n^2}{4}-\frac{n}{2}}$ & $nq^{\frac{n}{2}-3}$ & $\frac{12q^{\frac{n^2}{4}-n+3}}{n}$\\
$\calC_2$ & $\GO_1(q)\wr\Sy_n$ & $q=p$ odd, & $4$ & $2^{n-1}n!$ & $n-1$ & $\frac{2^{n+1}n!}{n-1}$\\
 &  & $r=n-1$ &  &  &  &  \\
$\calC_3$ & $\GO_\frac{n}{2}(q^2).\C_2$ & $n/2$ odd, & $2$ & $6q^{\frac{n^2}{8}-\frac{3}{2}}$ & $\frac{nq^{\frac{n}{2}-2}}{4}$ & $\frac{48q^{\frac{n^2}{8}-\frac{n}{2}+\frac{1}{2}}}{n}$\\
 &  & $q$ odd &  &  &  &  \\
$\calC_3$ & $\GU_\frac{n}{2}(q).\C_2$ & $n/2$ even & $2$ & $5q^{\frac{n^2}{8}+\frac{n}{4}}$ & $|\Nor_G(\langle x\rangle)|$ & $\frac{10q^{\frac{n^2}{8}+\frac{n}{4}}}{|\Nor_G(\langle x\rangle)|}$\\
$\calC_6$ & $\C_2^{2k}.\GO_{2k}^+(2)$ & $n=2^k$, & $8$ & $n^{2\log_2(n)+1}$ & $n-1$ & $n^{2\log_2(n)+1}$\\
 &  & $q=p$ odd, &  &  &  &  \\
 &  & $r=n-1$ &  &  &  &  \\
\hline
\end{tabular}
\end{table}

For odd $q$, Lemma~\ref{lem8} gives that
\[
n!<3^{\frac{(n-2)^2}{4}-\frac{5(n-2)}{8}+4}=3^{\frac{n^2}{4}-\frac{13n}{8}+\frac{25}{4}}\leqslant3^\frac{25}{4}q^{\frac{n^2}{4}-\frac{13n}{8}}
\]
and so
\[
\frac{2^{n+1}n!|\Nor_G(\langle x\rangle)|}{n-1}<\frac{2^{11}q^{n-10}\cdot3^\frac{25}{4}q^{\frac{n^2}{4}-\frac{13n}{8}}\cdot2nq^\frac{n}{2}}{\frac{9n}{10}}
=2^{13}3^\frac{17}{4}5q^{\frac{n^2}{4}-\frac{n}{8}-10}.
\]
Then it follows from Table~\ref{tab13} that
\[
\sum_{M\in\mu\cap\calC_2}\frac{|\Nor_G(\langle x\rangle)|i_2(M)}{|\Nor_M(\langle x\rangle)|i_2(G)}
\leqslant\frac{2^{n+1}n!|\Nor_G(\langle x\rangle)|}{(n-1)i_2(G)}
<\frac{2^{13}3^\frac{17}{4}5q^{\frac{n^2}{4}-\frac{n}{8}-10}}{i_2(G)}
<\frac{1349}{q^\frac{n}{10}}.
\]
Moreover, we deduce from Table~\ref{tab13} that
\begin{align*}
\sum_{M\in\mu\cap\calC_1}\frac{|\Nor_G(\langle x\rangle)|i_2(M)}{|\Nor_M(\langle x\rangle)|i_2(G)}
&\leqslant\frac{6q^{\frac{n^2}{4}-n+2}}{i_2(G)}+\frac{12q^{\frac{n^2}{4}-n+3}|\Nor_G(\langle x\rangle)|}{ni_2(G)}\\
&<\frac{6q^{\frac{n^2}{4}-n+2}}{i_2(G)}+\frac{24q^{\frac{n^2}{4}-\frac{n}{2}+3}}{i_2(G)}
<\frac{1}{q^\frac{n}{10}}+\frac{192}{q^\frac{n}{10}}=\frac{193}{q^\frac{n}{10}},
\end{align*}
\begin{align*}
\sum_{M\in\mu\cap\calC_3}\frac{|\Nor_G(\langle x\rangle)|i_2(M)}{|\Nor_M(\langle x\rangle)|i_2(G)}
&\leqslant\frac{48q^{\frac{n^2}{8}-\frac{n}{2}+\frac{1}{2}}|\Nor_G(\langle x\rangle)|}{ni_2(G)}+\frac{10q^{\frac{n^2}{8}+\frac{n}{4}}}{i_2(G)}\\
&<\frac{96q^{\frac{n^2}{8}+\frac{1}{2}}}{i_2(G)}+\frac{10q^{\frac{n^2}{8}+\frac{n}{4}}}{i_2(G)}
<\frac{16}{q^\frac{n}{10}}+\frac{1}{q^\frac{n}{10}}=\frac{17}{q^\frac{n}{10}},
\end{align*}
\[
\sum_{M\in\mu\cap\calC_6}\frac{|\Nor_G(\langle x\rangle)|i_2(M)}{|\Nor_M(\langle x\rangle)|i_2(G)}
\leqslant\frac{n^{2\log_2(n)+1}|\Nor_G(\langle x\rangle)|}{i_2(G)}
<\frac{2n^{2\log_2(n)+2}q^\frac{n}{2}}{i_2(G)}
<\frac{n^{2\log_2(n)}}{q^\frac{n}{2}},
\]
and hence
\begin{align}\label{eq20}
\sum_{M\in\mu\setminus\calS}\frac{|\Nor_G(\langle x\rangle)|i_2(M)}{|\Nor_M(\langle x\rangle)|i_2(G)}
<\frac{1349}{q^\frac{n}{10}}+\frac{193}{q^\frac{n}{10}}+\frac{17}{q^\frac{n}{10}}+\frac{n^{2\log_2(n)}}{q^\frac{n}{2}}
=\frac{1559}{q^\frac{n}{10}}+\frac{n^{2\log_2(n)}}{q^\frac{n}{2}}.
\end{align}

For $n=10$ we see from~\cite[Table~8.67]{BHR2013} that there exist absolute constants $C_1$ and $C_2$ such that the number of conjugacy classes of $\calS$-subgroups $M$ of $G$ with order divisible by $r$ is at most $C_1$ and all such $\calS$-subgroups $M$ satisfy $i_2(M)<C_2$. This implies that for $n=10$,
\begin{align*}
\sum_{M\in\mu\cap\calS}\frac{|\Nor_G(\langle x\rangle)|i_2(M)}{|\Nor_M(\langle x\rangle)|i_2(G)}
&<\sum_{M\in\mu\cap\calS}\frac{|\Nor_G(\langle x\rangle)|i_2(M)}{i_2(G)}\\
&<\frac{C_1C_2|\Nor_G(\langle x\rangle)|}{i_2(G)}
<\frac{2C_1C_2nq^\frac{n}{2}}{i_2(G)}
<\frac{C_1C_2n}{q^\frac{n}{12}}
<\frac{C_1C_2n^3}{q^\frac{n}{12}}.
\end{align*}
Applying Lemma~\ref{lem8} we obtain
\[
n!<2^{\frac{(n-2)^2}{4}-\frac{n-2}{8}+8}=2^{\frac{n^2}{4}-\frac{9n}{8}+\frac{37}{4}}\leqslant2^\frac{37}{4}q^{\frac{n^2}{4}-\frac{9n}{8}}
\]
and thus
\[
(n+2)!<2^\frac{37}{4}q^{\frac{n^2}{4}-\frac{9n}{8}}(n+1)(n+2)<2^{10}n^2q^{\frac{n^2}{4}-\frac{9n}{8}}.
\]
Then for $n\geqslant12$ we derive from Lemma~\ref{lem1} that
\[
\sum_{M\in\mu\cap\calS}i_2(M)<8(n+2)!+(2n+72)q^{2n+4}<2^{13}n^2q^{\frac{n^2}{4}-\frac{9n}{8}}+n^2q^{2n+4},
\]
whence
\begin{align*}
\sum_{M\in\mu\cap\calS}\frac{|\Nor_G(\langle x\rangle)|i_2(M)}{|\Nor_M(\langle x\rangle)|i_2(G)}
<\sum_{M\in\mu\cap\calS}\frac{2nq^\frac{n}{2}i_2(M)}{i_2(G)}
&<\frac{2^{14}n^3q^{\frac{n^2}{4}-\frac{5n}{8}}}{i_2(G)}+\frac{2n^3q^{\frac{5n}{2}+4}}{i_2(G)}\\
&<\frac{2897n^3}{q^\frac{n}{12}}+\frac{16n^3}{q^\frac{n}{12}}=\frac{2913n^3}{q^\frac{n}{12}}.
\end{align*}
Therefore, we have for all even $n\geqslant10$ that
\begin{equation}\label{eq21}
\sum_{M\in\mu\cap\calS}\frac{|\Nor_G(\langle x\rangle)|i_2(M)}{|\Nor_M(\langle x\rangle)|i_2(G)}<\frac{C_0n^3}{q^\frac{n}{12}},
\end{equation}
where $C_0=\max\{C_1C_2,2913\}$.

Combining~\eqref{eq20} and~\eqref{eq21} we conclude that
\[
\sum_{M\in\mu}\frac{|\Nor_G(\langle x\rangle)|i_2(M)}{|\Nor_M(\langle x\rangle)|i_2(G)}
<\frac{1559}{q^\frac{n}{10}}+\frac{n^{2\log_2(n)}}{q^\frac{n}{2}}+\frac{C_0n^3}{q^\frac{n}{12}}.
\]
By Lemma~\ref{lem9}, for sufficiently large $q^n$ we have $n^{2\log_2(n)}<q^\frac{n}{4}$ and so
\[
\sum_{M\in\mu}\frac{|\Nor_G(\langle x\rangle)|i_2(M)}{|\Nor_M(\langle x\rangle)|i_2(G)}
<\frac{1559}{q^\frac{n}{10}}+\frac{1}{q^\frac{n}{4}}+\frac{C_0n^3}{q^\frac{n}{12}}
\leqslant\frac{1559}{(q^n)^{\frac{1}{10}}}+\frac{1}{(q^n)^{\frac{1}{4}}}+\frac{C_0(\log_2(q^n))^3}{(q^n)^{\frac{1}{12}}}.
\]
This in conjunction with~\eqref{eq19} shows that~\eqref{eq8} tends to $1$ as $q^n$ tends to infinity. Then the conclusion of the lemma follows readily.
\end{proof}

\begin{lemma}\label{lem12}
Let $n\geqslant8$ be even and $q=p^f$ with prime $p$. Let $G=\POm_n^-(q)$ and $r\in\ppd(p,nf)$. Suppose that $x$ is an element of order $r$ in $G$. Then the probability of a random involution $y$ in $G$ satisfying $G=\langle x,y\rangle$ tends to $1$ as $q^n$ tends to infinity.
\end{lemma}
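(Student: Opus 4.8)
The plan is to mirror the arguments of Lemmas~\ref{lem11}--\ref{lem14} exactly. Let $\calM$ be the set of maximal subgroups of $G$ containing $x$ and let $\mu$ be a set of $G$-conjugacy class representatives for $\calM$. By~\cite[Corollary~6.2]{King2017} the quantity $\sum_{M\in\calM}i_2(M)/i_2(G)$ equals $\sum_{M\in\mu}|\Nor_G(\langle x\rangle)|i_2(M)/(|\Nor_M(\langle x\rangle)|i_2(G))$, so in view of~\eqref{eq8} it suffices to prove that this latter sum tends to $0$ as $q^n\to\infty$. Lemmas~\ref{lem7} and~\ref{lem4} furnish the global estimates
\[
|\Nor_G(\langle x\rangle)|=\frac{n(q^\frac{n}{2}+1)}{\gcd(2,q-1)^2|\PSO_n^-(q)/G|}<2nq^\frac{n}{2},\qquad i_2(G)\geqslant\frac{q^{\frac{n^2}{4}-1}}{8},
\]
so that $|\Nor_G(\langle x\rangle)|/i_2(G)<16n/q^{\frac{n^2}{4}-\frac{n}{2}-1}$ throughout.

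The decisive simplification in this case is that $e=n$. Since $r\in\ppd(p,nf)$, the element $q=p^f$ has multiplicative order exactly $n$ modulo $r$, so a primitive $r$th root of unity generates $\bbF_{q^n}$ over $\bbF_q$ and hence $x$ acts irreducibly on the natural module; this excludes every overgroup in $\calC_1$. Moreover, because $r>nf\geqslant n$, no element of order $r$ can permute the at most $n$ blocks of an imprimitive decomposition nontrivially, which excludes $\calC_2$; and orthogonal groups have no $\calC_8$. By~\cite[Proposition~4.1]{King2017} the groups $M\in\mu\setminus\calS$ then lie only in $\calC_3$ (field-extension subgroups, chiefly of type $\GU_{n/2}(q).\C_2$, together with at most $\log_2(n)$ classes of prime-degree extension subgroups by Lemma~\ref{lem10}) and, in the exceptional case $n=2^k$, in $\calC_6$. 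I would record these in a table of the same shape as Table~\ref{tab13}. For the $\calC_3$ types one has $|\Nor_M(\langle x\rangle)|=|\Nor_G(\langle x\rangle)|$ and $i_2(M)<q^{\frac{n^2}{8}+O(n)}$, so each term equals $i_2(M)/i_2(G)=O(q^{-\frac{n^2}{8}})$ and their total stays below, say, $\log_2(n)/q^\frac{n}{2}$. For the $\calC_6$ term the bound $i_2(M)<n^{2\log_2(n)+O(1)}$ together with the global estimates gives a contribution of the form $n^{O(\log_2(n))}/q^\frac{n}{2}$.

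For the $\calS$-subgroups I would invoke Lemma~\ref{lem1}, which applies for all even $n\geqslant8$ here: there are at most $a(G)=|\GO_n^-(q)|/|G|$ classes with alternating socle, each satisfying $i_2(M)<(n+2)!$, and at most $b(G)=(n^2+\frac{21}{4}n-1)|\GO_n^-(q)|/|G|$ further classes, each satisfying $i_2(M)<q^{2n+4}$. Bounding $|\Nor_G(\langle x\rangle)|/|\Nor_M(\langle x\rangle)|$ crudely by $|\Nor_G(\langle x\rangle)|<2nq^\frac{n}{2}$, the whole $\calS$-contribution is at most $2nq^\frac{n}{2}(a(G)(n+2)!+b(G)q^{2n+4})/i_2(G)$. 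The only subtlety is the factorial: as in the proof of Lemma~\ref{lem14}, I would apply Lemma~\ref{lem8} to $n!$ in the form $n!<2^{\frac{(n-2)^2}{4}-\frac{n-2}{8}+8}$ and then multiply by $(n+1)(n+2)$, obtaining $(n+2)!<2^{10}n^2q^{\frac{n^2}{4}-\frac{9n}{8}}$ for $n\geqslant10$; this makes the alternating part decay like $n^3/q^\frac{5n}{8}$ rather than grow, while the polynomial part $b(G)q^{2n+4}$ decays like $q^{-\frac{n^2}{4}+O(n)}$. Hence the $\calS$-contribution is $O(n^3/q^\frac{n}{c})$ for an absolute constant $c$. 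The smallest case $n=8$, where Lemma~\ref{lem8} is unavailable at argument $n-2$, I would dispatch separately using the relevant table of~\cite{BHR2013}, which bounds both the number of pertinent $\calS$-classes and their involution counts by absolute constants.

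Collecting the $\calC_3$, $\calC_6$ and $\calS$ estimates yields
\[
\sum_{M\in\mu}\frac{|\Nor_G(\langle x\rangle)|i_2(M)}{|\Nor_M(\langle x\rangle)|i_2(G)}<\frac{n^{O(\log_2(n))}}{q^\frac{n}{2}}+\frac{C_0n^3}{q^\frac{n}{c}}
\]
for absolute constants $C_0$ and $c$. By Lemma~\ref{lem9}, applied with a suitable $s$, the superpolynomial factor satisfies $n^{O(\log_2(n))}<q^\frac{n}{4}$ once $q^n$ is large, so every summand becomes a fixed negative power of $q^n$ times a polylogarithmic factor in $q^n$; hence the sum tends to $0$ and~\eqref{eq8} tends to $1$, giving the lemma. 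I expect the only genuinely delicate points to be the faithful extraction of the $\calC_3$ and $\calC_6$ overgroup data from~\cite[Proposition~4.1]{King2017} and the factorial bookkeeping in the $\calS$-term; once the analogue of Table~\ref{tab13} is assembled, the estimates run in complete parallel to those already carried out, with Lemma~\ref{lem9} taming the extraspecial term and Lemma~\ref{lem8} taming the alternating term.
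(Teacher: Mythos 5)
Your proposal follows essentially the same route as the paper's proof: the same reduction via \cite[Corollary~6.2]{King2017} and \eqref{eq8}, the same list of $\calC_3$ overgroups (types $\GO_k^-(q^t).\C_t$ and $\GU_{n/2}(q).\C_2$) with $|\Nor_M(\langle x\rangle)|=|\Nor_G(\langle x\rangle)|$, and the same handling of the $\calS$-subgroups via Lemma~\ref{lem1}, the factorial bound of Lemma~\ref{lem8} applied at argument $n-2$, and the tables of \cite{BHR2013} for the smallest $n$. The only cosmetic deviation is your precautionary $\calC_6$ term, which in fact does not occur for $\POm_n^-(q)$ (the extraspecial normalizers arise only in the plus-type case), so the paper's non-$\calS$ contribution is already $O(n/q^{n/2})$ and Lemma~\ref{lem9} is not needed here; including the spurious term is harmless since you still bound it by a quantity tending to $0$.
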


\begin{proof}
Let $\calM$ be the set of maximal subgroups of $G$ containing $x$ and let $\mu$ be a set of $G$-conjugacy class representatives for $\calM$. Then by~\cite[Corollary~6.2]{King2017},
\begin{equation}\label{eq22}
\sum_{M\in\calM}\frac{i_2(M)}{i_2(G)}=\sum_{M\in\mu}\frac{|\Nor_G(\langle x\rangle)|i_2(M)}{|\Nor_M(\langle x\rangle)|i_2(G)}.
\end{equation}
Note from Lemmas~\ref{lem7} and~\ref{lem4} that
\[
|\Nor_G(\langle x\rangle)|=\frac{n(q^\frac{n}{2}+1)}{\gcd(2,q-1)^2|\PSO_n^-(q)/G|}
\leqslant\frac{n(q^\frac{n}{2}+1)}{2}<nq^\frac{n}{2}
\]
and
\[
i_2(G)\geqslant\frac{q^{\frac{n^2}{4}-1}}{8}.
\]

By~\cite[Proposition~4.1]{King2017}, the groups $M\in\mu\setminus\calS$ have type $\GO_k^-(q^t).\C_t$ with $n=kt\geqslant4t$ and $t$ prime or $\GU_\frac{n}{2}(q).\C_2$ with $n/2$ odd. Moreover, there is exactly one $G$-conjugacy class of subgroups $M$ of the first type for each $t$, and they satisfy
\[
i_2(M)<3q^\frac{n^2}{4t}\quad\text{and}\quad|\Nor_M(\langle x\rangle)|=|\Nor_G(\langle x\rangle)|;
\]
there is precisely one $G$-conjugacy class of subgroups $M$ of the second type, and they satisfy
\[
i_2(M)<5q^{\frac{n^2}{8}+\frac{n}{4}}\quad\text{and}\quad|\Nor_M(\langle x\rangle)|=|\Nor_G(\langle x\rangle)|.
\]
It then follows from Lemma~\ref{lem10} that
\begin{align}\label{eq23}
\sum_{M\in\mu\setminus\calS}\frac{|\Nor_G(\langle x\rangle)|i_2(M)}{|\Nor_M(\langle x\rangle)|i_2(G)}
&<\sum_{t\in\pi(n)}\frac{3q^\frac{n^2}{4t}}{i_2(G)}+\frac{5q^{\frac{n^2}{8}+\frac{n}{4}}}{i_2(G)}\\\nonumber
&\leqslant\frac{3\log_2(n)q^\frac{n^2}{8}}{i_2(G)}+\frac{5q^{\frac{n^2}{8}+\frac{n}{4}}}{i_2(G)}
\leqslant\frac{3\log_2(n)}{q^\frac{n}{2}}+\frac{20}{q^\frac{n}{2}}<\frac{4n}{q^\frac{n}{2}}.
\end{align}

For $n\leqslant10$ we see from~\cite[Tables~8.53~and~8.69]{BHR2013} that there exist absolute constants $C_1$ and $C_2$ such that the number of conjugacy classes of $\calS$-subgroups $M$ of $G$ with order divisible by $r$ is at most $C_1$ and all such $\calS$-subgroups $M$ satisfy $i_2(M)<C_2$. This implies that for $n\leqslant10$,
\begin{align*}
\sum_{M\in\mu\cap\calS}\frac{|\Nor_G(\langle x\rangle)|i_2(M)}{|\Nor_M(\langle x\rangle)|i_2(G)}
&<\sum_{M\in\mu\cap\calS}\frac{|\Nor_G(\langle x\rangle)|i_2(M)}{i_2(G)}\\
&<\frac{C_1C_2|\Nor_G(\langle x\rangle)|}{i_2(G)}
<\frac{C_1C_2nq^\frac{n}{2}}{i_2(G)}
<\frac{C_1C_2n}{q^\frac{n}{12}}
<\frac{C_1C_2n^3}{q^\frac{n}{12}}.
\end{align*}
If $n\geqslant12$, then applying Lemma~\ref{lem8} we obtain
\[
n!<2^{\frac{(n-2)^2}{4}-\frac{n-2}{8}+8}=2^{\frac{n^2}{4}-\frac{9n}{8}+\frac{37}{4}}\leqslant2^\frac{37}{4}q^{\frac{n^2}{4}-\frac{9n}{8}}
\]
and so
\[
(n+2)!<2^\frac{37}{4}q^{\frac{n^2}{4}-\frac{9n}{8}}(n+1)(n+2)<2^{10}n^2q^{\frac{n^2}{4}-\frac{9n}{8}}.
\]
Thus for $n\geqslant12$ we derive from Lemma~\ref{lem1} that
\[
\sum_{M\in\mu\cap\calS}i_2(M)<8(n+2)!+8\left(n^2+\frac{21}{4}n-1\right)q^{2n+4}<2^{13}n^2q^{\frac{n^2}{4}-\frac{9n}{8}}+16n^2q^{2n+4},
\]
whence
\begin{align*}
\sum_{M\in\mu\cap\calS}\frac{|\Nor_G(\langle x\rangle)|i_2(M)}{|\Nor_M(\langle x\rangle)|i_2(G)}
<\sum_{M\in\mu\cap\calS}\frac{nq^\frac{n}{2}i_2(M)}{i_2(G)}
&<\frac{2^{13}n^3q^{\frac{n^2}{4}-\frac{5n}{8}}}{i_2(G)}+\frac{16n^3q^{\frac{5n}{2}+4}}{i_2(G)}\\
&<\frac{1449n^3}{q^\frac{n}{12}}+\frac{128n^3}{q^\frac{n}{12}}=\frac{1577n^3}{q^\frac{n}{12}}.
\end{align*}
Therefore, we have for all even $n\geqslant8$ that
\begin{equation}\label{eq24}
\sum_{M\in\mu\cap\calS}\frac{|\Nor_G(\langle x\rangle)|i_2(M)}{|\Nor_M(\langle x\rangle)|i_2(G)}<\frac{C_0n^3}{q^\frac{n}{12}},
\end{equation}
where $C_0=\max\{C_1C_2,1577\}$.

Combining~\eqref{eq23} and~\eqref{eq24} we conclude that
\[
\sum_{M\in\mu}\frac{|\Nor_G(\langle x\rangle)|i_2(M)}{|\Nor_M(\langle x\rangle)|i_2(G)}
<\frac{3n}{q^\frac{n}{2}}+\frac{C_0n^3}{q^\frac{n}{12}}
\leqslant\frac{3\log_2(q^n)}{(q^n)^{\frac{1}{2}}}+\frac{C_0(\log_2(q^n))^3}{(q^n)^{\frac{1}{12}}}.
\]
This in conjunction with~\eqref{eq22} shows that~\eqref{eq8} tends to $1$ as $q^n$ tends to infinity. Then the conclusion of the lemma follows readily.
\end{proof}

\section{Probability for GRR}

To complete the proof of Theorem~\ref{thm1} we need the following two technical lemmas.

\begin{lemma}\label{lem5}
Let $G$ be a finite simple classical group with natural module $V$. Suppose that $\alpha$ is an involution in $\Aut(G)\setminus\PGL(V)$. Then $i_2(\Cen_G(\alpha))<\ell(G)$ with $\ell(G)$ given in Table~$\ref{tab3}$, where $\ell(G)=0$ means that $\alpha$ does not exist for such $G$.
\end{lemma}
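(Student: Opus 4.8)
The plan is to use the standard classification of the involutory automorphisms lying outside the inner-diagonal group. An involution $\alpha\in\Aut(G)\setminus\PGL(V)$ is, up to conjugacy in $\Aut(G)$, a field automorphism, a graph automorphism, or a graph-field automorphism, and within each family there are only boundedly many classes of such $\alpha$. Since $\alpha'=\alpha^g$ with $g\in\Aut(G)$ forces $\Cen_G(\alpha')=\Cen_G(\alpha)^g\cong\Cen_G(\alpha)$, conjugate outer involutions have isomorphic centralizers, so it suffices to bound $i_2(\Cen_G(\alpha))$ for one representative of each class and to take $\ell(G)$ to be a convenient upper bound for the maximum of these finitely many quantities; when no outer involution of order $2$ exists for a given $G$ we set $\ell(G)=0$. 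The key input is that in every case $\Cen_G(\alpha)$ is, up to a central factor and an outer factor of order at most $\gcd(n,q\mp1)$ or $\gcd(2,q-1)$, a classical group of explicitly smaller size than $G$, whose isomorphism type is read off from the action of $\alpha$ on the natural module $V$.

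First I would treat the field and graph-field automorphisms. An involutory field automorphism $\alpha$ exists only when $q$ is a square, and then $\Cen_G(\alpha)$ is, up to the bounded factors above, the classical group of the same type as $G$ but defined over $\bbF_{\sqrt q}$; likewise a graph-field involution of a linear group yields a unitary group over $\bbF_{\sqrt q}$, and of an orthogonal group of $+$ type a smaller orthogonal group over $\bbF_{\sqrt q}$. For the graph automorphisms I would use that the fixed-point subgroup of the inverse-transpose involution on $\SL_n(q)$ is $\SO_n(q)$ (orthogonal type) or $\Sp_n(q)$ (symplectic type, $n$ even), and that the graph involution of $\POm_n^+(q)$ has centralizer of orthogonal type $\Omega_{n-1}(q)$. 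In each subcase the centralizer is thus a specific classical group $H$.

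Next I would bound $i_2(H)$ for each such $H$ by the very method of Lemma~\ref{lem2}: applying \cite[Proposition~1.3]{LLS2002} to the simple classical group underlying $H$ gives $i_2(H)<3q_0^{\dim(Y_H)-N_H}$, where $q_0\in\{q,\sqrt q\}$ is the defining field size of $H$ and $\dim(Y_H)$, $N_H$ are the analogues for $H$ of the quantities appearing in Table~\ref{tab5}. Comparing the resulting exponents across the finitely many classes isolates the dominant contribution in each case, and recording it in the corresponding row produces the values $\ell(G)$ of Table~\ref{tab3}. Throughout I would absorb the central scalars and the outer factors of order $\gcd(n,q\mp1)$ or $\gcd(2,q-1)$ into the constant and the polynomial part of the bound, so that they do not affect the leading exponent.

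The main obstacle is the bookkeeping of the case analysis rather than any single hard estimate: one must correctly identify the centralizer type for every family of classical groups and every class of outer involution, keeping careful track of the distinctions between $G$ and its relatives $\SL(V)$, $\PGL(V)$, $\GO_n^\pm(q)$, $\PSO_n^\pm(q)$ and $\POm_n^\pm(q)$, since these affect both which involutions $\alpha$ occur and the precise order of $\Cen_G(\alpha)$. Particular care is needed for the graph automorphisms of orthogonal groups of $+$ type and for the exceptional behaviour in small characteristic (for instance the extra graph automorphisms when $G=\Sp_4(2^f)$ or the triality of $G=\POm_8^+(q)$), where the generic centralizer description must be checked and, if necessary, adjusted; once the centralizer types are pinned down, the involution bound itself is routine.
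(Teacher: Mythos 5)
Your overall strategy is the same as the paper's: classify the involutions in $\Aut(G)\setminus\PGL(V)$ as field, graph or graph--field automorphisms, identify $\Cen_G(\alpha)$ as (an almost simple group with socle) a smaller classical group via the standard results (the paper uses \cite[Proposition~4.4]{LS1996}, \cite[\S4.5,~\S4.8]{KL1990} and \cite{BG2016}), and then bound $i_2$ of that centralizer by the mechanism of Lemma~\ref{lem2}, i.e.\ \cite[Proposition~1.3]{LLS2002}. The paper also disposes of a handful of small cases such as $\PSL_2(4)$, $\PSL_2(9)$, $\PSU_3(4)$ and $\PSp_4(4)$ by direct computation, which your sketch omits but which is routine.

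There is, however, one step that would fail as written. You propose to treat ``the graph involution of $\POm_n^+(q)$'' with centralizer of type $\Omega_{n-1}(q)$ inside this lemma. That involution (the diagram symmetry of $D_{n/2}$, induced by a reflection) lies in $\PGO_n^+(q)\leqslant\PGL(V)$, so it is excluded by the hypothesis $\alpha\notin\PGL(V)$ and is instead handled in Lemma~\ref{lem3}. This is not merely a matter of bookkeeping: a centralizer containing $\Omega_{n-1}(q)$ has on the order of $q^{(n^2-2n)/4}$ involutions, which for $n\geqslant8$ exceeds the table value $\ell(\POm_n^+(q))=3q^{n^2/8}$, so if that case genuinely belonged here the stated bound would be false. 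The correct resolution is that for $\POm_n^+(q)$ only field and graph--field involutions occur outside $\PGL(V)$, with socle of the centralizer equal to $\POm_n^\pm(q^{1/2})$, giving $3(q^{1/2})^{n^2/4}=3q^{n^2/8}$. Two further points your sketch glosses over: for $\PSU_n(q)$ the natural module is over $\bbF_{q^2}$, and one needs \cite[Proposition~3.3.18]{BG2016} to see that every involution in $\Aut(G)\setminus\PGL_n(q^2)$ is a graph automorphism, with centralizer of symplectic or orthogonal type over $\bbF_q$; and for $\POm_n^-(q)$ one needs \cite[Proposition~3.5.25]{BG2016} to see that no involution at all exists in $\Aut(G)\setminus\PGL(V)$, which is why $\ell(G)=0$ there --- this requires an argument, not just a convention.
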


\begin{table}[htbp]
\caption{The pair $(G,\ell(G))$ in Lemma~\ref{lem5}}\label{tab3}
\centering
\begin{tabular}{|l|l|l|}
\hline
$G$ & Conditions & $\ell(G)$\\
\hline
$\PSL_n(q)$ & $n\geqslant2$, $(n,q)\neq(2,2)$ or $(2,3)$ & $3q^{\frac{n^2}{4}+\frac{n}{2}}$\\
$\PSU_n(q)$ & $n\geqslant3$, $(n,q)\neq(3,2)$ & $3q^{\frac{n^2}{4}+\frac{n}{4}}$\\
$\PSp_n(q)$ & $n\geqslant4$ even, $(n,q)\neq(4,2)$ & $3q^{\frac{n^2}{8}+\frac{n}{4}}$\\
$\POm_n(q)$ & $n\geqslant7$ odd, $q$ odd & $3q^{\frac{n^2-1}{8}}$\\
$\POm_n^+(q)$ & $n\geqslant8$ even & $3q^{\frac{n^2}{8}}$\\
$\POm_n^-(q)$ & $n\geqslant8$ even & $0$\\
\hline
\end{tabular}
\end{table}

\begin{proof}
We derive the conclusion of the lemma for each row of Table~\ref{tab3}.

\underline{Row~1.} $G=\PSL_n(q)$ with $n\geqslant2$ and $(n,q)\neq(2,2)$ or $(2,3)$. For $(n,q)=(2,4)$ or $(2,9)$ computation in \magma~\cite{magma} directly verifies the conclusion of the lemma. Thus we may assume $(n,q)\neq(2,4)$ or $(2,9)$. If $\alpha$ is a field automorphism or a graph-field automorphism of $G$, then we deduce from~\cite[Proposition~4.4]{LS1996} and~\cite[\S4.5~and~\S4.8]{KL1990} that $\Cen_G(\alpha)$ is an almost simple group with socle $\PSL_n(q^{\frac{1}{2}})$ or $\PSU_n(q^{\frac{1}{2}})$. In this case Lemma~\ref{lem2} implies that $i_2(\Cen_G(\alpha))<3q^{\frac{n^2}{4}+\frac{n}{4}-\frac{1}{2}}$. If $\alpha$ is a graph automorphism of $G$, then by~\cite[Proposition~4.4]{LS1996}, either $n$ is odd and $\Cen_G(\alpha)=\PSO_n(q)$, or $n$ is even and $\Cen_G(\alpha)\leqslant\PSp_n(q)$ or $\PSO_n^\pm(q)$. In this case we deduce from Lemma~\ref{lem2} that $i_2(\Cen_G(\alpha))<3q^{\frac{n^2}{4}+\frac{n}{2}}$. In both cases we have $i_2(\Cen_G(\alpha))<3q^{\frac{n^2}{4}+\frac{n}{2}}$, as the lemma asserts.

\underline{Row~2.} $G=\PSU_n(q)$ with $n\geqslant3$ and $(n,q)\neq(3,2)$. For $(n,q)=(3,4)$ computation in \magma~\cite{magma} directly verifies the conclusion of the lemma. Thus we may assume that $(n,q)\neq(3,4)$. According to~\cite[Proposition~3.3.18]{BG2016}, $\alpha$ is a graph automorphism of $G$. Then by~\cite[Proposition~4.4]{LS1996}, either $n$ is odd and $\Cen_G(\alpha)=\PSO_n(q)$, or $n$ is even and $\Cen_G(\alpha)\leqslant\PSp_n(q)$ or $\PSO_n^\pm(q)$. Hence it follows from Lemma~\ref{lem2} that $i_2(\Cen_G(\alpha))<3q^{\frac{n^2}{4}+\frac{n}{2}}$, as the lemma asserts.

\underline{Row~3.} $G=\PSp_n(q)$ with $n\geqslant4$ even and $(n,q)\neq(4,2)$. For $(n,q)=(4,4)$ computation in \magma~\cite{magma} directly verifies the conclusion of the lemma. Thus we may assume $(n,q)\neq(4,4)$. Then by~\cite[Proposition~4.4]{LS1996} and~\cite[\S4.5]{KL1990}, $\Cen_G(\alpha)$ is an almost simple group with socle $\PSp_n(q^{\frac{1}{2}})$. Hence we deduce from Lemma~\ref{lem2} that $i_2(\Cen_G(\alpha))<3q^{\frac{n^2}{8}+\frac{n}{4}}$.

\underline{Row~4.} $G=\POm_n(q)$ with $n\geqslant7$ odd and $q$ odd. We derive from~\cite[Proposition~4.4]{LS1996} and~\cite[\S4.5]{KL1990} that $\Cen_G(\alpha)$ is an almost simple group with socle $\POm_n(q^{\frac{1}{2}})$. Then Lemma~\ref{lem2} implies that $i_2(\Cen_G(\alpha))<3q^{\frac{n^2-1}{8}}$.

\underline{Row~5.} $G=\POm_n^+(q)$ with $n\geqslant8$ even. By~\cite[Proposition~4.4]{LS1996} and~\cite[\S4.5]{KL1990}, $\Cen_G(\alpha)$ is an almost simple group with socle $\POm_n^+(q^{\frac{1}{2}})$ or $\POm_n^-(q^{\frac{1}{2}})$. It then follows from Lemma~\ref{lem2} that $i_2(\Cen_G(\alpha))<3q^{\frac{n^2}{8}}$.

\underline{Row~6.} $G=\POm_n^-(q)$ with $n\geqslant8$ even. By~\cite[Proposition~3.5.25]{BG2016} there is no such $\alpha$. This completes the proof.
\end{proof}

\begin{lemma}\label{lem3}
Let $G$ be a finite simple classical group with natural module $V$ and $r\in\ppd(p,ef)$, where $G$ and $e$ are given in Table~$\ref{tab6}$ with $q=p^f$, $p$ prime and $\ppd(p,ef)\neq\emptyset$. Suppose that $x\in G$ has order $r$ and $\alpha$ is an involution in $\Aut(G)\cap\PGL(V)$ such that $x^\alpha=x^{-1}$. Then $i_2(\Cen_G(\alpha))<m(G)i_2(G)$ with $m(G)$ given in Table~$\ref{tab6}$, where $C$ is an absolute constant and $m(G)=0$ means that $\alpha$ does not exist for such $G$.
\end{lemma}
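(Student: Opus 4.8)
The plan is to argue row by row through Table~\ref{tab6}, exactly as in the proof of Lemma~\ref{lem5}, but now the decisive input is the extra hypothesis $x^\alpha=x^{-1}$: it is this inversion condition that forces $\Cen_G(\alpha)$ to be small. First I would observe that such an $\alpha$ normalizes $\langle x\rangle$ and induces the inversion automorphism on it, so $\alpha\in\Nor_{\PGL(V)}(\langle x\rangle)$. Since $r\in\ppd(p,ef)$, the element $x$ generates a Singer-type torus whose generator acts irreducibly on a nondegenerate subspace $W\leqslant V$ of codimension $0$, $1$ or $2$ (the codimension and the relevant field being governed by $e$ and the form on $V$), and the linear elements of $\Nor_{\PGL(V)}(\langle x\rangle)$ that invert $x$ are completely described by the arithmetic condition that $-1$ lie in the cyclic subgroup of $\bbF_r^\times$ generated by $q$ (respectively $q^2$ in the unitary case). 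This is the origin of the dichotomy recorded in the table: when this condition fails, no inner involution inverts $x$ and $m(G)=0$; otherwise the inverting involution exists and, on the irreducible constituent $W$, is realized by the unique Frobenius-type element of order $2$, which fixes a subfield and hence has balanced $\pm1$-eigenspaces on $W$.

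The core step is to convert this balance into an explicit determination of $\Cen_G(\alpha)$. Writing $V=V_+\perp V_-$ for the eigenspace decomposition of $\alpha$, the inversion hypothesis forces $\dim V_+$ and $\dim V_-$ to agree up to the small contribution of $W^\perp$, so that $\alpha$ is the balanced inner-diagonal involution and not one of the unbalanced classes whose centralizers are far larger. I would then read off $\Cen_G(\alpha)$ from the standard descriptions of inner involution centralizers in \cite[\S3.5]{BG2016} and \cite[\S4.2]{KL1990}: in odd characteristic it is, up to the centre and to the index $|\PGL(V)\cap\Aut(G){:}G|$, a central product $H_1\times H_2$ of two classical groups on $V_+$ and $V_-$ of roughly half the dimension of $V$, while in even characteristic $\alpha$ is a unipotent involution whose Jordan type is forced to be balanced and whose centralizer is estimated directly. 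The finitely many small cases excluded by the dimension hypotheses, together with the exact values of the index factors, I would dispose of by computation in \magma\ as in Lemma~\ref{lem5}.

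With the type of $\alpha$ fixed, bounding its involutions is routine. Applying Lemma~\ref{lem2} to each factor gives $i_2(\Cen_G(\alpha))\leqslant i_2(\Aut(H_1))\,i_2(\Aut(H_2))$ up to a bounded factor, which is of the shape $C\,q^{\beta}$ with $\beta$ roughly half of the exponent occurring in $j(G)$ in Table~\ref{tab5}; dividing by the lower bound $i_2(G)\geqslant i(G)$ from Lemma~\ref{lem4} and Table~\ref{tab2} then yields the stated $m(G)$.

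The main obstacle will be the middle step: proving in each family that inverting the ppd-element $x$ really does pin $\alpha$ down to the balanced type and rules out every involution class with a large unbalanced centralizer. The difficulty is that one must simultaneously track the Hermitian or quadratic form on $V$, the precise value of $e$ (which for the unitary and orthogonal families is $2n$, $2(n-1)$, $n-1$ or $n-2$, so that $x$ fixes a nondegenerate complement of dimension $1$ or $2$), the linear structure of $\Nor_{\PGL(V)}(\langle x\rangle)$ and its realization inside $\PGL(V)$ rather than in the field-automorphism coset, and the even-characteristic case where all involutions are unipotent. Once $\alpha$ is identified the remaining estimates are elementary, of the same flavour as those carried out in Lemmas~\ref{lem11}--\ref{lem12}.
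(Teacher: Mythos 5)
Your first half---forcing $\alpha$ into the normalizer of the Singer-type torus containing $x$, deducing the parity obstruction that yields $m(G)=0$ in the two unitary rows, and reading off eigenspace information from the explicit lift $v\mapsto\omega v^{q^{n/2}}$---is essentially the paper's argument (the paper pins $\alpha$ down by computing $|\Nor_{\PGL(V)\cap\Aut(G)}(\langle x\rangle)|$ from Lemma~\ref{lem7} and matching it with the order of the Singer normalizer). But your endgame is not the paper's, and it has real gaps. The paper never determines $\Cen_G(\alpha)$: it proves only that no eigenspace of the lift $\beta$ has dimension exceeding half, concludes via the proof of \cite[Proposition~4.1]{LS1996} that $\Cen_G(\alpha)$ is contained in a maximal subgroup $M$ from an explicit short list (subspace stabilizers, imprimitive subgroups, field-extension subgroups of type $\GL_{\frac{n}{2}}(q^2).\C_2$, etc.), bounds $|G{:}M|$ from below using \cite{KL1990}, and then applies Lemma~\ref{lem6}: $i_2(M)/i_2(G)<c|G{:}M|^{-\frac{2}{5}}$. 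The exponents in Table~\ref{tab6} are exactly $-\tfrac{2}{5}$ times the exponents of these index bounds, which is the tell that Lemma~\ref{lem6} is the intended engine; your proposal never invokes it.

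The concrete problems with replacing this by ``identify $\Cen_G(\alpha)$ as $H_1\times H_2$ and apply Lemma~\ref{lem2} to each factor'' are twofold. First, in even characteristic $\alpha$ lifts to a unipotent involution conjugate to $\begin{pmatrix}I&0\\ I&I\end{pmatrix}$, whose centralizer is of shape $q^{*}{:}H$ with a large unipotent radical rather than a central product of two classical groups; the inequality $i_2(\Cen_G(\alpha))\leqslant i_2(\Aut(H_1))\,i_2(\Aut(H_2))$ is meaningless there, and ``estimated directly'' is not a proof. Second, even for $q$ odd the decomposition $V=V_+\perp V_-$ fails when $\beta^2$ is a non-square scalar: then $\beta$ is conjugate to $\begin{pmatrix}0&I\\ \mu I&0\end{pmatrix}$, has no eigenvalues in $\bbF_q$, and the relevant overgroup is the field-extension subgroup of type $\GL_{\frac{n}{2}}(q^2).\C_2$, not a stabilizer of a balanced eigenspace decomposition. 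Both situations appear on the paper's lists of overgroups (Tables~\ref{tab7},~\ref{tab10},~\ref{tab9}) and are dispatched uniformly by the index bound plus Lemma~\ref{lem6}; your centralizer-product estimate would have to be reworked case by case to cover them, and as written it does not.
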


\begin{table}[htbp]
\caption{The triple $(G,e,m(G))$ in Lemma~\ref{lem3}}\label{tab6}
\centering
\begin{tabular}{|l|l|l|l|}
\hline
$G$ & Conditions & $e$ & $m(G)$\\
\hline
$\PSL_n(q)$ & $n\geqslant2$, $(n,q)\neq(2,2)$ or $(2,3)$ & $n$ & $Cq^{-\frac{n^2}{10}}$\\
$\PSU_n(q)$ & $n\geqslant3$ odd, $(n,q)\neq(3,2)$ & $2n$ & $0$\\
$\PSU_n(q)$ & $n\geqslant4$ even & $2(n-1)$ & $0$\\
$\PSp_n(q)$ & $n\geqslant4$ even, $(n,q)\neq(4,2)$ & $n$ & $Cq^{-\frac{n^2}{20}-\frac{n}{10}}$\\
$\POm_n(q)$ & $n\geqslant7$ odd, $q$ odd & $n-1$ & $Cq^{-\frac{n^2}{10}+\frac{1}{2}}$\\
$\POm_n^+(q)$ & $n\geqslant8$ even & $n-2$ & $Cq^{-\frac{n^2}{20}+\frac{n}{10}}$\\
$\POm_n^-(q)$ & $n\geqslant8$ even & $n$ & $Cq^{-\frac{n^2}{20}+\frac{n}{10}}$\\
\hline
\end{tabular}
\end{table}

\begin{proof}
We derive the conclusion of the lemma for each row of Table~\ref{tab6}.

\underline{Row~1.} $G=\PSL_n(q)$ with $n\geqslant2$ and $(n,q)\neq(2,2)$ or $(2,3)$. Here we have $\Aut(G)\cap\PGL(V)=\PGL_n(q)$. Let $N=\C_{(q^n-1)/(q-1)}\rtimes\C_n$ be the normalizer of the Singer cycle of $\PGL_n(q)$ containing $x$. Since $N$ contains a Sylow $r$-subgroup of $\PGL_n(q)$, each element of order $r$ in $G$ is contained in a $\PGL_n(q)$-conjugate of $N$. Thus we may assume without loss of generality that $N$ consists of elements induced by some $\gamma\in\GL_n(q)$ such that
\[
\gamma:v\mapsto\omega v^{q^{n/t}},\quad v\in\bbF_{q^n},
\]
where $\omega\in\bbF_{q^n}^\times$ with $\bbF_{q^n}$ viewed as a vector space of dimension $n$ over $\bbF_q$ and $t$ is a divisor of $n$. Note that $N\leqslant\Nor_{\PGL_n(q)}(\langle x\rangle)$. By Lemma~\ref{lem7},
\[
|\Nor_G(\langle x\rangle)||\PGL_n(q)/G|=\frac{n(q^n-1)|\PGL_n(q)/G|}{(q-1)\gcd(n,q-1)}=\frac{n(q^n-1)}{q-1}=|N|.
\]
It follows that
\[
|N|\leqslant|\Nor_{\PGL_n(q)}(\langle x\rangle)|\leqslant|\Nor_G(\langle x\rangle)||\PGL_n(q)/G|=|N|
\]
and so $\Nor_{\PGL_n(q)}(\langle x\rangle)=N$. Consequently, $\alpha\in N$ as $\alpha\in\Nor_{\PGL_n(q)}(\langle x\rangle)$. Now $\alpha$ is an involution in $N=\C_{(q^n-1)/(q-1)}\rtimes\C_n$ such that $x^\alpha=x^{-1}$. We conclude that $n$ is even and $\alpha$ is induced by some $\beta\in\GL_n(q)$ such that
\[
\beta:v\mapsto\omega v^{q^{n/2}},\quad v\in\bbF_{q^n},
\]
where $\omega\in\bbF_{q^n}^\times$. For any $\lambda\in\bbF_q$, the equation
\[
\omega v^{q^{n/2}}=\lambda v
\]
has at most $q^{n/2}$ solutions $v\in\bbF_{q^n}$. This shows that $\beta$ has no eigenspace of dimension larger than $n/2$ over $\bbF_q$. Then we see from the proof of~\cite[Proposition~4.1]{LS1996} that $\beta$ is conjugate to
\[
\begin{pmatrix}
I_{\frac{n}{2}}&0\\
0&-I_{\frac{n}{2}}
\end{pmatrix}
\quad\text{or}\quad
\begin{pmatrix}
0&I_{\frac{n}{2}}\\
\mu I_{\frac{n}{2}}&0
\end{pmatrix}
\]
(where $\mu$ is a non-square in $\bbF_q$) if $q$ is odd, and conjugate to
\[
\begin{pmatrix}
I_{\frac{n}{2}}&0\\
I_{\frac{n}{2}}&I_{\frac{n}{2}}
\end{pmatrix}
\]
if $q$ is even; $\Cen_G(\alpha)$ is contained in a maximal subgroup $M$ of $G$ such that $M$ either stabilizes a subspace of dimension $n/2$ over $\bbF_q$, or is of type $\GL_{\frac{n}{2}}(q)\wr\Sy_2$ or $\GL_{\frac{n}{2}}(q^2).\C_2$. If $M$ stabilizes a subspace of dimension $n/2$ over $\bbF_q$ or is of type $\GL_{\frac{n}{2}}(q)\wr\Sy_2$, then
\[
|G{:}M|\geqslant\prod_{i=1}^{\frac{n}{2}}\frac{q^{\frac{n}{2}+i}-1}{q^i-1}>\prod_{i=1}^{\frac{n}{2}}q^{\frac{n}{2}}=q^{\frac{n^2}{4}}.
\]
If $M$ is of type $\GL_{\frac{n}{2}}(q^2).\C_2$, then by~\cite[Proposition~4.3.6]{KL1990},
\[
|G{:}M|=\frac{|\GL_n(q)|}{2|\GL_{\frac{n}{2}}(q^2)|}=\frac{q^{\frac{n}{2}}}{2}\prod_{i=1}^{\frac{n}{2}}(q^{2i-1}-1)
>\frac{q^{\frac{n}{2}}}{2}\prod_{i=1}^{\frac{n}{2}}q^{2i-2}=\frac{q^{\frac{n^2}{4}}}{2}.
\]
In both cases we have $|G{:}M|>q^{\frac{n^2}{4}}/2$, so by Lemma~\ref{lem6} there is an absolute constant $c$ such that
\[
\frac{i_2(M)}{i_2(G)}<c|G{:}M|^{-\frac{2}{5}}<2^{\frac{2}{5}}cq^{-\frac{n^2}{10}}.
\]

\underline{Row~2.} $G=\PSU_n(q)$ with $n\geqslant3$ odd and $(n,q)\neq(3,2)$. Let $N=\C_{(q^{2n}-1)/(q^2-1)}\rtimes\C_n$ be the normalizer of the Singer cycle of $\PGL_n(q^2)$ containing $x$. Then from the argument for row~1 we see that $n$ is even, a contradiction.

\underline{Row~3.} $G=\PSU_n(q)$ with $n\geqslant4$ even and $(n,q)\neq(4,2)$. Let $S=\C_{q^{2(n-1)}-1}$ be a Singer cycle of $\GL_{n-1}(q^2)$ and $N=\C_{q^{2(n-1)}-1}\rtimes\C_{n-1}$ be the normalizer of $S$ in $\GL_{n-1}(q^2)$. For any $H\leqslant\GL_{n-1}(q^2)$ denote the projection of
\[
\left\{
\begin{pmatrix}
A&0\\
0&1
\end{pmatrix}
\middle|A\in H\right\}\cap\GU_n(q)
\]
into $\PGU_n(q)$ by $\overline{H}$. We have $\overline{S}=\C_{q^{n-1}+1}$ and $\overline{N}=\overline{S}.\C_{n-1}$. Since $\overline{S}$ contains a Sylow $r$-subgroup of $\PGU_n(q)$, we may assume without loss of generality that $x\in\overline{S}$. It follows that $\overline{N}\leqslant\Nor_{\PGU_n(q)}(\langle x\rangle)$, and by Lemma~\ref{lem7},
\[
|\Nor_G(\langle x\rangle)||\PGU_n(q)/G|=\frac{(n-1)(q^{n-1}+1)|\PGU_n(q)/G|}{\gcd(n,q+1)}=(n-1)(q^{n-1}+1)=|\overline{N}|.
\]
This leads to
\[
|\overline{N}|\leqslant|\Nor_{\PGU_n(q)}(\langle x\rangle)|\leqslant|\Nor_G(\langle x\rangle)||\PGU_n(q)/G|=|\overline{N}|
\]
and thus $\Nor_{\PGU_n(q)}(\langle x\rangle)=\overline{N}$. Consequently, $\alpha\in\overline{N}$ as $\alpha\in\Nor_{\PGU_n(q)}(\langle x\rangle)$. Hence $\alpha$ is an involution in $\overline{N}=\overline{S}.\C_{n-1}$ such that $x^\alpha=x^{-1}$, which implies that $n-1$ is even, a contradiction.

\underline{Row~4.} $G=\PSp_n(q)$ with $n\geqslant4$ even and $(n,q)\neq(4,2)$ or $(6,2)$. Let $N=\C_{(q^n-1)/(q-1)}\rtimes\C_n$ be the normalizer of the Singer cycle of $\PGL_n(q)$ containing $x$. In the argument for row~1 we have obtained that $\alpha$ is induced by some $\beta\in\GL_n(q)$ such that $\beta$ has no eigenspace of dimension larger than $n/2$ over $\bbF_q$. Then similarly as for row~1, we see from the proof of~\cite[Proposition~4.1]{LS1996} that $\Cen_G(\alpha)$ is contained in a maximal subgroup $M$ of $G$ such that $M$ lies in Table~\ref{tab7}. By virtue of~\cite[Propositions~4.1.19,~4.2.5,~4.2.10,~4.3.7~and~4.3.10]{KL1990} we can easily calculate $|G{:}M|$, which is listed in Table~\ref{tab7}. It follows that $|G{:}M|>q^{\frac{n^2}{8}+\frac{n}{4}}$, and so by Lemma~\ref{lem6} there is an absolute constant $c$ such that
\[
\frac{i_2(M)}{i_2(G)}<c|G{:}M|^{-\frac{2}{5}}<cq^{-\frac{n^2}{20}-\frac{n}{10}}.
\]

\begin{table}[htbp]
\caption{The pair $(M,|G{:}M|)$ for $G=\PSp_n(q)$ in the proof of Lemma~\ref{lem3}}\label{tab7}
\centering
\begin{tabular}{|l|l|l|l|}
\hline
Class & Type of $M$ & Conditions & $|G{:}M|$\\
\hline
$\calC_1$ & $\Pa_\frac{n}{2}$ & $q$ even & $\prod\limits_{i=1}^\frac{n}{2}(q^i+1)$\\
$\calC_2$ & $\GL_\frac{n}{2}(q).\C_2$ & $q$ odd & $\frac{q^{\frac{n^2}{8}+\frac{n}{4}}}{2}\prod\limits_{i=1}^\frac{n}{2}(q^i+1)$\\
$\calC_2$ & $\Sp_\frac{n}{2}(q)\wr\Sy_2$ & $n/2$ even, $q$ odd & $\frac{q^\frac{n^2}{8}}{2}\prod\limits_{i=1}^\frac{n}{4}\frac{q^{\frac{n}{2}+2i}-1}{q^{2i}-1}$\\
$\calC_3$ & $\GU_\frac{n}{2}(q).\C_2$ & $q$ odd & $\frac{q^{\frac{n^2}{8}+\frac{n}{4}}}{2}\prod\limits_{i=1}^\frac{n}{2}(q^i+(-1)^i)$\\
$\calC_3$ & $\Sp_\frac{n}{2}(q^2).\C_2$ & $n/2$ even, $q$ odd & $\frac{q^\frac{n^2}{8}}{2}\prod\limits_{i=1}^\frac{n}{4}(q^{4i-2}-1)$\\
\hline
\end{tabular}
\end{table}

\underline{Row~5.} $G=\POm_n(q)$ with $n\geqslant7$ odd and $q$ odd. Let $S=\C_{q^{n-1}-1}$ be a Singer cycle of $\GL_{n-1}(q)$ and $N=\C_{q^{n-1}-1}\rtimes\C_{n-1}$ be the normalizer of $S$ in $\GL_{n-1}(q)$. For any $H\leqslant\GL_{n-1}(q)$ denote the projection of
\[
\left\{
\begin{pmatrix}
A&0\\
0&1
\end{pmatrix}
\middle|A\in H\right\}\cap\GO_n(q)
\]
into $\PGO_n(q)$ by $\overline{H}$. We have $\overline{S}=\C_{q^{(n-1)/2}+1}$ and $\overline{N}=\overline{S}.\C_{n-1}$. Since $\overline{S}$ contains a Sylow $r$-subgroup of $\PGO_n(q)$, we may assume without loss of generality that $x\in\overline{S}$. It follows that $\overline{N}\leqslant\Nor_{\PGO_n(q)}(\langle x\rangle)$, and by Lemma~\ref{lem7},
\[
|\Nor_G(\langle x\rangle)||\PGO_n(q)/G|=\frac{(n-1)(q^\frac{n-1}{2}+1)|\PGO_n(q)/G|}{2}=(n-1)(q^\frac{n-1}{2}+1)=|\overline{N}|.
\]
This leads to
\[
|\overline{N}|\leqslant|\Nor_{\PGO_n(q)}(\langle x\rangle)|\leqslant|\Nor_G(\langle x\rangle)||\PGO_n(q)/G|=|\overline{N}|
\]
and thus $\Nor_{\PGO_n(q)}(\langle x\rangle)=\overline{N}$. Consequently, $\alpha\in\overline{N}$ as $\alpha\in\Nor_{\PGO_n(q)}(\langle x\rangle)$. Now $\alpha$ is an involution in $\overline{N}=\overline{S}.\C_{n-1}$ such that $x^\alpha=x^{-1}$. Similarly as for row~1 we obtain that $\alpha$ is induced by some $\beta\in\GL_{n-1}(q)$ such that $\beta$ has no eigenspace of dimension larger than $(n-1)/2$ over $\bbF_q$, which implies that $\alpha$ has no eigenspace of dimension larger than $(n+1)/2$ over $\bbF_q$. Then we see from the proof of~\cite[Proposition~4.1]{LS1996} that $\Cen_G(\alpha)$ is contained in the stabilizer $M$ in $G$ of a nonsingular subspace of dimension $(n-1)/2$ over $\bbF_q$. Hence
\[
|G{:}M|=\frac{|\GO_n(q)|}{|\GO_\frac{n-1}{2}^\varepsilon(q)||\GO_\frac{n+1}{2}(q)|}
=\frac{q^\frac{n^2-1}{8}(q^\frac{n-1}{2}+1)(q^\frac{n-1}{4}+\varepsilon1)}{2}\prod_{i=1}^\frac{n-5}{4}\frac{q^{\frac{n-1}{2}+2i}-1}{q^{2i}-1}
\]
if $n\equiv1\pmod{4}$, and
\[
|G{:}M|=\frac{|\GO_n(q)|}{|\GO_\frac{n+1}{2}^\varepsilon(q)||\GO_\frac{n-1}{2}(q)|}
=\frac{q^\frac{n^2-1}{8}(q^\frac{n+1}{4}+\varepsilon1)}{2}\prod_{i=1}^\frac{n-3}{4}\frac{q^{\frac{n+1}{2}+2i}-1}{q^{2i}-1}
\]
if $n\equiv3\pmod{4}$, where $\varepsilon=\pm$. In both cases we have $|G{:}M|>q^{\frac{n^2-5}{4}}/2$, so by Lemma~\ref{lem6} there is an absolute constant $c$ such that
\[
\frac{i_2(M)}{i_2(G)}<c|G{:}M|^{-\frac{2}{5}}<2^{\frac{2}{5}}cq^{-\frac{n^2}{10}+\frac{1}{2}}.
\]

\underline{Row~6.} $G=\POm_n^+(q)$ with $n\geqslant8$ even. Let $S=\C_{q^{n-2}-1}$ be a Singer cycle of $\GL_{n-2}(q)$ and $N=(\C_{q^{n-2}-1}\rtimes\C_{n-2})\times\mathrm{GO}_2^-(q)$ be the normalizer of $S$ in $\GL_{n-2}(q)\times\mathrm{GO}_2^-(q)$. Note that $N\cap\mathrm{GO}_n^+(q)=(\C_{q^{(n-2)/2}+1}\rtimes\C_{n-2})\times\mathrm{GO}_2^-(q)$. For any $H\leqslant\GL_n(q)$ denote the projection of $H\cap\mathrm{GO}_n^+(q)$ into $\PGO_n^+(q)$ by $\overline{H}$. We then have $\overline{S}=\C_{q^{(n-2)/2}+1}$ and
\[
|\overline{N}|=\frac{|N\cap\mathrm{GO}_n^+(q)|}{q-1}=2(n-2)(q^{\frac{n}{2}-1}+1)(q+1)
\]
Since $\overline{S}$ contains a Sylow $r$-subgroup of $\PGO_n^+(q)$, we may assume without loss of generality that $x\in\overline{S}$. It follows that $\overline{N}\leqslant\Nor_{\PGO_n^+(q)}(\langle x\rangle)$, and by Lemma~\ref{lem7},
\begin{align*}
|\Nor_G(\langle x\rangle)||\PGO_n^+(q)/G|&=\frac{2(n-2)(q^{\frac{n}{2}-1}+1)(q+1)|\PGO_n^+(q)/G|}{{\gcd(2,q-1)^2|\PSO_n^+(q)/G|}}\\
&=2(n-2)(q^{\frac{n}{2}-1}+1)(q+1)=|\overline{N}|.
\end{align*}
This leads to
\[
|\overline{N}|\leqslant|\Nor_{\PGO_n^+(q)}(\langle x\rangle)|\leqslant|\Nor_G(\langle x\rangle)||\PGO_n^+(q)/G|=|\overline{N}|
\]
and thus $\Nor_{\PGO_n^+(q)}(\langle x\rangle)=\overline{N}$. Consequently, $\alpha\in\overline{N}$ as $\alpha\in\Nor_{\PGO_n^+(q)}(\langle x\rangle)$. Now $\alpha$ is an involution in $\overline{N}$ such that $x^\alpha=x^{-1}$. Similarly as for row~1 we obtain that $\alpha$ is induced by some $(\beta_1,\beta_2)\in\GL_{n-2}(q)\times\GL_2(q)$ such that $\beta_1$ has no eigenspace of dimension larger than $(n-2)/2$ over $\bbF_q$, which implies that $\alpha$ has no eigenspace of dimension larger than $(n+2)/2$ over $\bbF_q$. Then similarly as for row~1, we see from the proof of~\cite[Proposition~4.1]{LS1996} that $\Cen_G(\alpha)$ is contained in a maximal subgroup $M$ of $G$ such that $M$ lies in Table~\ref{tab10}, where $\N_i^+$, $\N_i^-$ and $\N_i$ are as defined in~\cite[Page~25]{LPS1990}. Appealing to~\cite[Propositions~4.1.6,~4.1.20,~4.2.7,~4.2.11,~4.2.14,~4.2.16,~4.3.14,~4.3.18 and~4.3.20]{KL1990} we can easily calculate $|G{:}M|$, which is listed in Table~\ref{tab10}. It follows that $|G{:}M|>q^{\frac{n^2}{8}-\frac{n}{4}}$, and so by Lemma~\ref{lem6} there is an absolute constant $c$ such that
\[
\frac{i_2(M)}{i_2(G)}<c|G{:}M|^{-\frac{2}{5}}<cq^{-\frac{n^2}{20}+\frac{n}{10}}.
\]

\begin{table}[htbp]
\caption{The pair $(M,|G{:}M|)$ for $G=\POm_n^+(q)$ in the proof of Lemma~\ref{lem3}}\label{tab10}
\centering
\begin{tabular}{|l|l|l|l|}
\hline
Class & Type of $M$ & Conditions & $|G{:}M|$\\
\hline
$\calC_1$ & $\N_{\frac{n}{2}-1}^\varepsilon$ & $n/2$ odd, $q$ odd, $\varepsilon=\pm$ & $\frac{q^{\frac{n^2}{8}-\frac{1}{2}}(q^\frac{n}{2}-1)(q^\frac{n+2}{4}+\varepsilon1)}{2(q^\frac{n-2}{4}-\varepsilon1)}
\prod\limits_{i=1}^\frac{n-6}{4}\frac{q^{\frac{n}{2}+1+2i}-1}{q^{2i}-1}$\\
$\calC_1$ & $\N_{\frac{n}{2}-1}$ & $n/2$ even, $q$ odd & $\frac{q^{\frac{n^2}{8}-1}(q^\frac{n}{2}-1)}{2}\prod\limits_{i=1}^{\frac{n}{4}-1}\frac{q^{\frac{n}{2}+2i}-1}{q^{2i}-1}$\\
$\calC_1$ & $\Pa_{\frac{n}{2}-1}$ & $q$ even & $\frac{q^\frac{n}{2}-1}{q-1}\prod\limits_{i=1}^{\frac{n}{2}-1}(q^i+1)$\\
$\calC_1$ & $\Pa_\frac{n}{2}$ & $q$ even & $\prod\limits_{i=1}^{\frac{n}{2}-1}(q^i+1)$\\
$\calC_2$ & $\GL_\frac{n}{2}(q).\C_2$ & $q$ odd & $\frac{q^{\frac{n^2}{8}-\frac{n}{4}}}{\gcd(2,n/2)}\prod\limits_{i=1}^{\frac{n}{2}-1}(q^i+1)$\\
$\calC_2$ & $\GO_\frac{n}{2}^\varepsilon(q)\wr\Sy_2$ & $n/2$ even, $q$ odd, $\varepsilon=\pm$ & $\frac{q^\frac{n^2}{8}(q^\frac{n}{4}+\varepsilon1)^2}{4}\prod\limits_{i=1}^{\frac{n}{4}-1}\frac{q^{\frac{n}{2}+2i}-1}{q^{2i}-1}$\\
$\calC_2$ & $\GO_\frac{n}{2}(q)\wr\Sy_2$ & $n/2$ odd, $q\equiv1\pmod{4}$ & $\frac{q^{\frac{n^2}{8}-\frac{1}{2}}(q^\frac{n}{2}-1)}{4}\prod\limits_{i=1}^{\frac{n-2}{4}}\frac{q^{\frac{n}{2}-1+2i}-1}{q^{2i}-1}$\\
$\calC_2$ & $\GO_\frac{n}{2}(q)^2$ & $n/2$ odd, $q\equiv3\pmod{4}$ & $\frac{q^{\frac{n^2}{8}-\frac{1}{2}}(q^\frac{n}{2}-1)}{2}\prod\limits_{i=1}^{\frac{n-2}{4}}\frac{q^{\frac{n}{2}-1+2i}-1}{q^{2i}-1}$\\
$\calC_3$ & $\GO_\frac{n}{2}^+(q^2).\C_2$ & $n/2$ even, $q$ odd & $\frac{q^\frac{n^2}{8}}{4}\prod\limits_{i=1}^\frac{n}{4}(q^{4i-2}-1)$\\
$\calC_3$ & $\GU_\frac{n}{2}(q).\C_2$ & $n/2$ even, $q$ odd & $\frac{q^{\frac{n^2}{8}-\frac{n}{4}}}{2}\prod\limits_{i=1}^{\frac{n}{2}-1}(q^i+(-1)^i)$\\
$\calC_3$ & $\GO_\frac{n}{2}(q^2).\C_2$ & $n/2$ odd, $q$ odd & $\frac{q^{\frac{n^2}{8}-\frac{1}{2}}(q^\frac{n}{2}-1)}{\gcd(4,q-1)}\prod\limits_{i=1}^\frac{n-2}{4}(q^{4i-2}-1)$\\
\hline
\end{tabular}
\end{table}

\underline{Row~7.} $G=\POm_n^-(q)$ with $n\geqslant8$ even. Let $N=\C_{(q^n-1)/(q-1)}\rtimes\C_n$ be the normalizer of the Singer cycle of $\PGL_n(q)$ containing $x$. In the argument for row~1 we have obtained that $\alpha$ is induced by some $\beta\in\GL_n(q)$ such that $\beta$ has no eigenspace of dimension larger than $n/2$ over $\bbF_q$. Then similarly as for row~1, we see from the proof of~\cite[Proposition~4.1]{LS1996} that $\Cen_G(\alpha)$ is contained in a maximal subgroup $M$ of $G$ such that $M$ lies in Table~\ref{tab9}. Thus appealing to~\cite[Propositions~4.1.20,~4.2.14,~4.2.16,~4.3.16,~4.3.18 and~4.3.20]{KL1990} we can easily calculate $|G{:}M|$, which is listed in Table~\ref{tab9}. It follows that $|G{:}M|>q^{\frac{n^2}{8}-\frac{n}{4}}$, and so by Lemma~\ref{lem6} there is an absolute constant $c$ such that
\[
\frac{i_2(M)}{i_2(G)}<c|G{:}M|^{-\frac{2}{5}}<cq^{-\frac{n^2}{20}+\frac{n}{10}}.
\]
This completes the proof.

\begin{table}[htbp]
\caption{The pair $(M,|G{:}M|)$ for $G=\POm_n^-(q)$ in the proof of Lemma~\ref{lem3}}\label{tab9}
\centering
\begin{tabular}{|l|l|l|l|}
\hline
Class & Type of $M$ & Conditions & $|G{:}M|$\\
\hline
$\calC_1$ & $\Pa_\frac{n}{2}$ & $q$ even & $\frac{q^\frac{n}{2}+1}{q^\frac{n}{2}-1}\prod\limits_{i=1}^{\frac{n}{2}-1}(q^i+1)$\\
$\calC_2$ & $\GO_\frac{n}{2}(q)\wr\Sy_2$ & $n/2$ odd, $q\equiv3\pmod{4}$ & $\frac{q^{\frac{n^2}{8}-\frac{1}{2}}(q^\frac{n}{2}+1)}{4}\prod\limits_{i=1}^{\frac{n-2}{4}}\frac{q^{\frac{n}{2}-1+2i}-1}{q^{2i}-1}$\\
$\calC_2$ & $\GO_\frac{n}{2}(q)^2$ & $n/2$ odd, $q\equiv1\pmod{4}$ & $\frac{q^{\frac{n^2}{8}-\frac{1}{2}}(q^\frac{n}{2}+1)}{2}\prod\limits_{i=1}^{\frac{n-2}{4}}\frac{q^{\frac{n}{2}-1+2i}-1}{q^{2i}-1}$\\
$\calC_3$ & $\GO_\frac{n}{2}^-(q^2).\C_2$ & $n/2$ even, $q$ odd & $\frac{q^\frac{n^2}{8}}{2}\prod\limits_{i=1}^\frac{n}{4}(q^{4i-2}-1)$\\
$\calC_3$ & $\GU_\frac{n}{2}(q).\C_2$ & $n/2$ odd, $q$ odd & $q^{\frac{n^2}{8}-\frac{n}{4}}\prod\limits_{i=1}^{\frac{n}{2}-1}(q^i+(-1)^i)$\\
$\calC_3$ & $\GO_\frac{n}{2}(q^2).\C_2$ & $n/2$ odd, $q$ odd & $\frac{q^{\frac{n^2}{8}-\frac{1}{2}}(q^\frac{n}{2}+1)}{\gcd(4,q+1)}\prod\limits_{i=1}^\frac{n-2}{4}(q^{4i-2}-1)$\\
\hline
\end{tabular}
\end{table}
\end{proof}

For a group $G$ and a subset $S$ of $G$, denote by $\Aut(G,S)$ the set of automorphisms of $G$ that fix $S$ setwise. Now we are ready to prove Theorem~\ref{thm1}.

\vskip0.1in
\noindent\textit{Proof of Theorem~$\ref{thm1}$:} Let $G$ be a finite simple classical group with natural module $V$ and $x$ be an element of order $r\in\ppd(p,ef)$ in $G$, where $G$ and $e$ are given in Table~$\ref{tab4}$ with $q=p^f$ and $p$ prime. Denote the sets of involutions of $G$ and $\Aut(G)$ by $I$ and $J$, respectively, and write
\[
K=\{y\in I\mid G=\langle x,y\rangle\}\quad\text{and}\quad L=\{y\in I\mid\Aut(G,\{x,x^{-1},y\})=1\}.
\]
Without loss of generality we assume that $q^n$ is large enough such that by~\cite[Theorem~5.2.2]{KL1990}, $G$ has no proper subgroup of index at most $47$. Then as a consequence of~\cite[Theorem~3.3]{Godsil1983}, for every $y\in I$, $\Cay(G,\{x,x^{-1},y\})$ is a GRR of $G$ if and only if $y\in K\cap L$.

Consider an arbitrary element $y$ of $K\setminus L$. There exists a nontrivial $\alpha\in\Aut(G)$ such that $\alpha$ fixes $\{x,x^{-1},y\}$ setwise. Since $x$ and $x^{-1}$ both have order $r>e>2$ while $y$ has order $2$, it follows that $x^\alpha=x^{-1}$ and $y^\alpha=y$. In particular, $\alpha^2$ fixes both $x$ and $y$. Then as $G=\langle x,y\rangle$, we conclude that $\alpha^2=1$ and so $\alpha\in J$. This shows that
\[
K\setminus L\subseteq\bigcup_{\substack{\alpha\in J\\ x^\alpha=x^{-1}}}(I\cap\Cen_G(\alpha)).
\]
Using Lemmas~\ref{lem4}, \ref{lem5}~and~\ref{lem3} one can verify that, for each $\alpha\in J$ with $x^\alpha=x^{-1}$, we have $i_2(\Cen_G(\alpha))<u(G)i_2(G)$ for $u(G)$ in Table~\ref{tab15}, where $C$ is an absolute constant. Accordingly,
\[
|K\setminus L|\leqslant\sum_{\substack{\alpha\in J\\ x^\alpha=x^{-1}}}i_2(\Cen_G(\alpha))
<\sum_{\substack{\alpha\in J\\ x^\alpha=x^{-1}}}u(G)i_2(G)\leqslant u(G)i_2(G)|J\cap\Nor_{\Aut(G)}(\langle x\rangle)|.
\]
Since every involution in $\Nor_{\Aut(G)}(\langle x\rangle)$ projects to an involution or the identity in $\Aut(G)/(\Aut(G)\cap\PGL(V))$ and $i_2(\Aut(G)/(\Aut(G)\cap\PGL(V)))+1\leqslant4$ (see for example~\cite[\S1.7]{BHR2013}), we deduce that
\[
|J\cap\Nor_{\Aut(G)}(\langle x\rangle)|\leqslant4|\Nor_{\Aut(G)\cap\PGL(V)}(\langle x\rangle)|
\leqslant\frac{4|\Nor_G(\langle x\rangle)||\Aut(G)\cap\PGL(V)|}{|G|}.
\]
This implies by Lemma~\ref{lem7} that $|J\cap\Nor_{\Aut(G)}(\langle x\rangle)|<v(G)$ for $v(G)$ in Table~\ref{tab15}. It follows that
\[
|K\setminus L|<u(G)i_2(G)|J\cap\Nor_{\Aut(G)}(\langle x\rangle)|<u(G)i_2(G)v(G)
\]
and so
\[
\frac{|K\cap L|}{|I|}=\frac{|K|}{|I|}-\frac{|K\setminus L|}{|I|}>\frac{|K|}{|I|}-u(G)v(G).
\]
By Proposition~\ref{prop1}, $|K|/|I|$ tends to $1$ as $q^n$ tends to infinity. Moreover, noting that $n\leqslant\log_2(q^n)$, it is clear from Table~\ref{tab15} that $u(G)v(G)$ tends to $0$ as $q^n$ tends to infinity. Thus we conclude that $|K\cap L|/|I|$ tends to $1$ as $q^n$ tends to infinity, which means that the probability of a random $y\in I$ making $\Cay(G,\{x,x^{-1},y\})$ a GRR of $G$ tends to $1$ as $q^n$ tends to infinity.

\begin{table}[htbp]
\caption{The tuple $(G,e,u(G),v(G))$ in the proof of Theorem~\ref{thm1}}\label{tab15}
\centering
\begin{tabular}{|l|l|l|l|l|}
\hline
$G$ & Conditions & $e$ & $u(G)$ & $v(G)$\\
\hline
$\PSL_n(q)$ & $n\geqslant9$ & $n$ & $Cq^{-\frac{91n}{90}+1}$ & $8nq^{n-1}$\\
$\PSU_n(q)$ & $n\geqslant5$ odd & $2n$ & $Cq^{-\frac{11n}{10}+1}$ & $4nq^{n-1}$\\
$\PSU_n(q)$ & $n\geqslant6$ even & $2(n-1)$ & $Cq^{-\frac{4n}{3}+1}$ & $4nq^{n-1}$\\
$\PSp_n(q)$ & $n\geqslant10$ even & $n$ & $Cq^{-\frac{3n}{5}}$ & $6nq^\frac{n}{2}$\\
$\POm_n(q)$ & $n\geqslant9$ odd, $q$ odd & $n-1$ & $Cq^{-\frac{9n}{10}+\frac{1}{2}}$ & $6nq^{\frac{n}{2}-\frac{1}{2}}$\\
$\POm_n^+(q)$ & $n\geqslant14$ even & $n-2$ & $Cq^{-\frac{3n}{5}}$ & $16nq^\frac{n}{2}$\\
$\POm_n^-(q)$ & $n\geqslant14$ even & $n$ & $Cq^{-\frac{3n}{5}}$ & $6nq^\frac{n}{2}$\\
\hline
\end{tabular}
\end{table}
\qed

\vskip0.1in
\noindent\textsc{Acknowledgement.} The work on this paper was done when the author was a research associate at the University of Western Australia supported by the Australian Research Council Discovery Project DP150101066. The author would like to thank Prof.~Michael Giudici for his help and the anonymous referee for careful reading and valuable suggestions.


\begin{thebibliography}{}

%


\bibitem{Blackburn1982}
N. Blackburn and B. Huppert,
\textit{Finite groups II}, Springer-Verlag, Berlin-New York, 1982.

\bibitem{magma}
W. Bosma, J. Cannon and C. Playoust, The magma algebra system I: The user language,
\textit{J. Symbolic Comput.}, 24 (1997), no. 3-4, 235--265.

\bibitem{BHR2013}
J. N. Bray, D. F. Holt and C. M. Roney-Dougal,
\textit{The maximal subgroups of the low-dimensional finite classical groups}, Cambridge University Press, Cambridge, 2013.

\bibitem{BG2016}
T. Burness and M. Giudici,
\textit{Classical groups, derangements and primes}, Cambridge University Press, Cambridge, 2016.


\bibitem{FLWX2002}
X. G. Fang, C. H. Li, J. Wang and M. Y. Xu, On cubic Cayley graphs of finite simple groups,
\textit{Discrete Math.}, 244 (2002),  no. 1-3, 67--75.

\bibitem{Godsil1981}
C. D. Godsil, GRRs for nonsolvable groups,
\textit{Algebraic Methods in Graph Theory}, Szeged, 1978, in: Colloq. Math. Soc. J\'{a}nos Bolyai, vol. 25, North-Holland, Amsterdam-New York, 1981, pp. 221--239.

\bibitem{Godsil1983}
C. D. Godsil, The automorphism groups of some cubic Cayley graphs,
\textit{European J. Combin.}, 4  (1983),  no. 1, 25--32.
%

\bibitem{Hetzel1976}
D. Hetzel, \"{U}ber regul\"{a}re graphische Darstellung von aufl\"{o}sbaren Gruppen,
Technische Universit\"{a}t, Berlin, 1976.

\bibitem{King2017}
C. S. H. King, Generation of finite simple groups by an involution and an element of prime order,
\textit{J. Algebra} 478 (2017), 153--173.

\bibitem{KL1990}
P. B. Kleidman and M. W. Liebeck,
\textit{The subgroup structure of the finite classical groups}, Cambridge University Press, Cambridge, 1990.

\bibitem{LLS2002}
R. Lawther, M. W. Liebeck and G. Seitz, Fixed point ratios in actions of finite exceptional groups of Lie type,
\textit{Pacific J. Math.}, 205  (2002),  no. 2, 393--464.

\bibitem{LPS1990}
M. W. Liebeck, C. E. Praeger and J. Saxl, The maximal factorizations of the finite simple groups and their automorphism groups,
\textit{Mem. Amer. Math. Soc.},  86  (1990),  no. 432.

\bibitem{LS1996}
M. Liebeck and A. Shalev, Classical groups, probabilistic methods, and the (2,3)-generation problem,
\textit{Ann. of Math. (2)}, 144  (1996),  no. 1, 77--125.

%

\bibitem{Spiga2018}
P. Spiga, Cubic graphical regular representations of finite non-abelian simple groups,
\textit{Comm. Algebra}, 46 (2018), no. 6, 2440--2450.

\bibitem{XF2016}
B. Xia and T. Fang, Cubic graphical regular representations of $\mathrm{PSL}_2(q)$,
\textit{Discrete Math.}, 339 (2016) no. 8, 2051--2055.

\bibliographystyle{100}
\end{thebibliography}
\end{document}